\newcommand{\MlM}{\mathcal{M}_{l,m}}
\newcommand{\gge}[1]{}
\newcommand{\Mlr}{\mathcal{M}_{l,r}}
\newcommand{\MMr}{\mathcal{M}_{m,r}}
\newcommand{\Stlr}{\mathrm{St}_{l,r}}
\newcommand{\Mrr}{\mathcal{M}_{r,r}}
\newcommand{\rank}{\mathrm{rank}}
\newcommand{\M}{\mathscr{M}}
\newcommand{\RR}{\mathfrak{R}}
\newcommand{\HH}{\mathcal{H}_{(U,Z)}}
\newcommand{\R}{\mathbb{R}}
\newcommand{\D}{\ensuremath \mathrm{d}}
\newcommand{\<}{\ensuremath \leq}
\renewcommand{\>}{\ensuremath \geq}
\newcommand{\lb}[1]{\label{eqn:#1}}
\newcommand{\XX}{\mathfrak{X}}
\newcommand{\TT}[1]{{\mathcal{T}(#1)}}
\newcommand{\Sk}{\textrm{Sk}}
\newcommand{\NN}[1]{{\mathcal{N}(#1)}}
\newcommand{\LL}{\mathcal{L}}
\newcommand{\RN}[1]{%
	\textup{\uppercase\expandafter{\romannumeral#1}}%
}
\newcommand{\Tr}{\mathrm{Tr}}\newcommand{\DD}{\mathrm{D}}
\renewcommand{\(}{\begin{equation}}
\renewcommand{\)}{\end{equation}}
\newcommand{\TheTitle}{A geometric approach to \\ dynamical model--order reduction} 
\newcommand{\TheAuthors}{F. Feppon and P.F.J. Lermusiaux}
\title{{\TheTitle}}
\author{
  Florian Feppon
  \and
  Pierre F.J.\,Lermusiaux\thanks{MSEAS, Massachusetts Institute of Technology (\email{feppon@mit.edu}, \email{pierrel@mit.edu}).}
}
\begin{document}

	\maketitle

	\begin{abstract} Any model order reduced dynamical system that evolves a modal
		decomposition to approximate the discretized solution of a stochastic PDE can be
		related to a vector field tangent to the manifold of fixed rank matrices. The
		Dynamically Orthogonal (DO) approximation is the canonical reduced order model
		for which the corresponding vector field is the orthogonal projection of the
		original system dynamics onto the tangent spaces of this manifold. The embedded
		geometry of the fixed rank matrix manifold is thoroughly analyzed.  The curvature
		of the manifold is characterized and related to the smallest singular value
		through the study of the Weingarten map.  Differentiability results for the
		orthogonal projection onto embedded manifolds are reviewed and used to derive an
		explicit dynamical system for tracking the truncated Singular Value Decomposition
		(SVD)  of a time-dependent matrix. It is demonstrated that the error made by the
		DO approximation remains controlled under the minimal condition that the original
		solution stays close to the low rank manifold, which translates into an explicit
		dependence of this error on the gap between singular values.  The DO
		approximation is also justified as the dynamical system that applies
		instantaneously the SVD truncation to optimally constrain the rank of the reduced
		solution.  Riemannian matrix optimization is investigated in this extrinsic
		framework to provide algorithms that adaptively update the best low rank
		approximation of a smoothly varying matrix.  The related gradient flow provides a
		dynamical system that converges to the truncated SVD of an input matrix for
		almost every initial data.
	\end{abstract}

	\begin{keywords}
		Model order reduction, fixed rank matrix manifold,  low rank approximation, Singular Value Decomposition, orthogonal projection, curvature, Weingarten map, Dynamically Orthogonal approximation, Riemannian matrix optimization.
	\end{keywords}

	\begin{AMS}
		65C20,  53B21, 65F30,  15A23, 53A07, 35R60, 65M15
	\end{AMS}

	\section{Introduction}  Finding efficient model order reduction methods is an issue
	commonly encountered in a wide variety of domains involving intensive computations
	and expensive high-fidelity simulations
	\cite{schilders2008model,Quarteroni_Rozza_ROM_Springer2013,Kutz_2013,constantine2015_activesubspaces}.
	Such domains include uncertainty quantification
	\cite{ghanem_spanos_2003,lermusiaux_et_al_2006b,smith_UQ_SIAM2013,sullivan_UQ_Springer2015},
	dynamical systems analysis \cite{holmes_etal_PODturbulence_Cambridge1998,
	benner_etal_SIAMreview2015,williams_etal_JNS2015}, electrical engineering
	\cite{Fortuna_etal_Springer2013,Bartel_etal_Springer2014}, mechanical engineering
	\cite{Noack_etal_ROM_Springer2011}, ocean and weather predictions
	\cite{lermusiaux2001evolving,majda_timofeyev_JAS2003,cao_etal_IJNMF2007,rozier_etal_SIAMreview2007},
	chemistry \cite{okino_mavrovouniotis_CR1998}, and biology \cite{lee_etal_JPC2002},
	to name a few.  The computational costs and challenges arise from the complexity of
	the mathematical models as well as from the needs of representing variations of
	parameter values and the dominant uncertainties involved. For example, to quantify
	uncertainties of dynamical system fields, one often needs to solve stochastic
	partial differential equations (PDEs),
	\begin{equation} \label{eqn:SPDE}\partial_t \bm u= \mathscr{L}(t,\bm u;\omega)\, ,
	\end{equation}
	where $t$ is time, $\bm u$ the uncertain dynamical fields, $\mathscr{L}$ a differential operator, and $\omega$ a random event.
	For deterministic but parametric dynamical systems, $\omega$ may represent a large set of possible parameter values that need to be accounted for by the model-order reduction.
	Generally, after both spatial and stochastic/parametric event discretization of the PDE \cref{eqn:SPDE}, or more directly if the focus is on solving a complex system of ordinary differential equations (ODEs), one is interested in the numerical solution of a large system of ODEs of the form
	\begin{equation} \label{eqn:dRstar}\dot{\RR}=\LL (t,\RR),\end{equation}
	where $\LL$ is an operator acting on the space of $l$-by-$m$ matrices $\RR$. In the
	case of a direct Monte-Carlo approach for the resolution of the stochastic PDE
	\cref{eqn:SPDE}, $\LL$ is thought as being the discretization of the differential
	operator $\mathscr{L}$ by using $l$ spatial nodes and $m$ Monte-Carlo realizations
	or parameter values being considered.  Accurate quantification of the
	statistical/parametric properties of the original solution $\bm u$ often require to
	solve such system  \cref{eqn:dRstar} with both a high spatial resolution, $l$, and
	high number of realizations, $m$. Hence, solving \cref{eqn:dRstar} directly with a
	Monte-Carlo approach becomes quickly intractable for realistic, real-time
	applications such as ocean and weather predictions
	\cite{palmer_Rep_Prog_phys2000,lermusiaux_JCP2006} or real-time control
	\cite{Lall_etal_IJRNC2002,Lin_McLaughlin_SIAMJSC2014}.

	A method to address this challenge is to assume the existence of an approximation
	$\bm u_{\textrm{DO}}$ of the solution  $\bm u$ onto a finite number of $r$ spatial
	modes, $\bm u_i(t,x)$, and stochastic coefficients, $\zeta_i(t,\omega)$ (here
	assumed to be both time-dependent
	\cite{lermusiaux_JCP2006,sapsis2009dynamically}),
	\begin{equation} \label{eqn:KLdecomposition} \bm u(t,\bm x;\omega)\simeq {\bm
	u}_\textrm{DO}=\sum_{i=1}^r \zeta_i(t,\omega)\, \bm u_i(t,\bm x),\end{equation} and
	look for a dynamical system that would most accurately govern the evolution of
	these dominant modes and coefficients. The optimal approximation (in the sense that
	the $L^2$ error $\mathbb{E}[||\bm u-\bm u_{\textrm{DO}}||^2]^{1/2}$ is minimized)
	is achieved by the  Karuhnen-Lo\`{e}ve (KL) decomposition
	\cite{Loeve1978,holmes_etal_PODturbulence_Cambridge1998}, whose first $r$ modes
	yields an optimal orthonormal basis $(\bm u_i)$. Many methods, such as polynomial
	chaos expansions \cite{xiu_karniadakis_PCE_SIAM2002}, Fourier decomposition
	\cite{WillcoxMegretski2005}, or Proper Orthogonal Decomposition
	\cite{holmes_etal_PODturbulence_Cambridge1998} rely on the choice of a predefined,
	time-independent orthonormal basis either for the modes, $(\bm u_i)$, or the
	coefficients, $(\zeta_i)$, and obtain equations for the respective unknown
	coefficients or modes by Galerkin projection
	\cite{PetterssonIaccarinoNordstroem2015}. However, the use of modes and
	coefficients that are simultaneously dynamic has been shown to be efficient
	\cite{lermusiaux_JCP2006,lermusiaux_PhysD2007}. Dynamically Orthogonal (DO) field
	equations \cite{sapsis2009dynamically,sapsis2012dynamical} were thus derived to
	evolve adaptively this decomposition for a general differential operator
	$\mathscr{L}$ and allowed to obtain efficient simulations of  stochastic
	Navier-Stokes equations  \cite{ueckermann_et_al_JCP2013}.

	At the discrete level,  the decomposition \cref{eqn:KLdecomposition} is written
	$\RR\simeq R=UZ^T$ where $R$ is a rank $r$ approximation of the full rank matrix
	$\RR$, decomposed as the product of a $l$-by-$r$ matrix $U$ containing the
	discretization of the basis functions, $(\bm u_i)$, and of a $m$-by-$r$ matrix $Z$
	containing the realizations of the stochastic coefficients, $(\zeta_i)$. It is well
	known that such approximation  is optimal (in the Frobenius norm) when $R=UZ^T$  is
	obtained by truncating the Singular Value Decomposition (SVD), \emph{i.e.}\;by
	selecting $U$ to be the singular vectors associated with the $r$ largest singular
	values of $\RR$ and setting $Z=\RR^TU$ \cite{horn_johnson_1985,horn_johnson_1991}.
	In 2007, Koch and Lubich \cite{koch2007dynamical}  proposed a method inspired from
	the Dirac Frenkel variational  principle in quantum physics, to evolve dynamically
	a rank $r$ matrix $R=UZ^T$ that approximates the full dynamical system
	\cref{eqn:dRstar}. The main principle of the method lies in the intuition that one
	can update optimally the low-rank approximation $R$ by projecting
	$\mathcal{L}(t,R)$ onto the tangent space of the manifold constituted by low rank
	matrices. Recently, Musharbash \cite{musharbash2015error} noticed the parallel with
	the DO method, and applied the results obtained in \cite{koch2007dynamical} to
	analyze the error committed by the DO approximation for a stochastic heat equation.
	In fact, in the same way the KL expansion is the continuous analogous of the SVD,
	the discretization of the DO decomposition \cite{sapsis2009dynamically} is strictly equivalent to the
	dynamical low rank approximation of Koch and Lubich \cite{koch2007dynamical} when
	the discretization reduces to simulate the matrix dynamical system
	\cref{eqn:dRstar} of $m$ realizations spatially resolved with $l$ nodes.

	Simultaneously, new approaches have emerged since the 1990s in optimization onto
	matrix sets \cite{edelman1998,absil2009optimization}. The application of Riemannian
	geometry to manifolds of matrices has allowed the development of new optimization
	algorithms, that are evolving orthogonality constraints geometrically rather than
	using more classical techniques, such as Lagrange multiplier methods
	\cite{edelman1998}. Matrix dynamical systems that continuously perform matrices
	operations, such as inversion, eigen- or SVD-decompositions, steepest descents, and
	gradient optimization have thus been proposed
	\cite{brockett1988,dehaene1995continuous,Smith1991}. These continuous--time systems were extended and applied to adaptive uncertainty predictions, learning of dominant
	subspace, and data assimilation \cite{Lermusiaux1997,lermusiaux_DAO1999}.

	The purpose of this article is to extend the analysis and the understanding of the
	DO method in the matrix framework as initiated by \cite{koch2007dynamical} and in the above works, by
	furthering its relation to the Singular Value Decomposition and its geometric
	interpretation as a constrained dynamics on the manifold $\M$ of fixed rank
	matrices.  In the vein of \cite{edelman1998,absil2009optimization,mishra2014}, this
	article utilizes the point of view of differential geometry.  To provide a visual
	intuition, a 3D projection of two 2-dimensional subsurfaces of the manifold $\M$ of
	rank one 2-by-2 matrices is visible on  \cref{fig:plotManifold}. This figure has
	been obtained by using the parameterization  \[
		R(\rho,\theta,\phi)=\rho\begin{pmatrix} \sin(\theta)\sin(\phi) &
			\sin(\theta)\cos(\phi)\\ \cos(\theta)\sin(\phi) &
		\cos(\theta)\cos(\phi)\end{pmatrix}, \; \rho>0, \theta\in
	[0,2\pi],\,\phi\in[0,2\pi],\] on $\M$ and projecting orthogonally two subsurfaces  by
	plotting the first three elements $R_{11}, R_{12}$ and $R_{21}$.  Since the
	multiplication of singular values by a non-zero constant does not affect the rank of
	a matrix, $\M\subset \mathcal{M}_{2,2}$ is a cone, which is consistent with the
	increasing of curvature visible on \cref{fig:plotManifoldSpiral} near the origin.
	More generally, $\M$ is the union of $r$-dimensional affine subspaces of $\MlM$
	supported by the manifold of strictly lower rank matrices. It will actually be proven
	in \cref{sec:curvatureMatrix} that the curvature of $\M$ is inversely proportional to
	the lowest singular value, which diverges as matrices approach a rank strictly less
	than $r$. Hence $\M$ can be understood either as a collection of cones
	(\cref{fig:plotManifoldCone}) or as a multidimensional spiral
	(\cref{fig:plotManifoldSpiral}).
	\begin{figure}
		\centering
		\begin{subfigure}{0.45\linewidth}
			\centering
			\includegraphics[height=4cm]{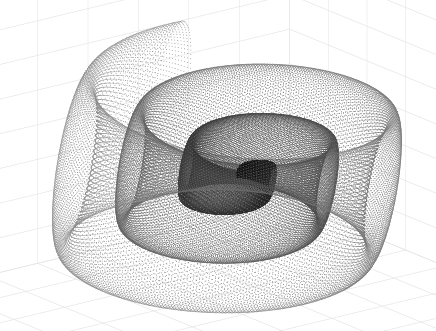}
			\caption{$R(\rho,\pi\rho,\phi)$}
			\label{fig:plotManifoldSpiral}
		\end{subfigure}
		\begin{subfigure}{0.45\linewidth}
			\centering
			\includegraphics[height=4cm]{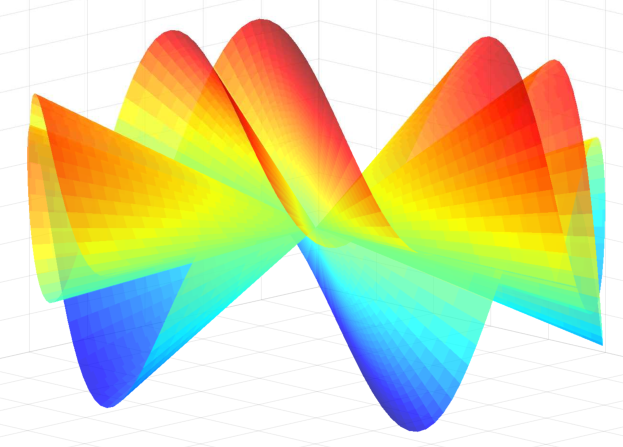}
			\caption{$R(\rho,\phi,\rho-\phi)$}
			\label{fig:plotManifoldCone}
		\end{subfigure}
		\caption{\small Two subsurfaces of the rank-1 manifold $\M$ of 2-by-2 matrices.}
		\label{fig:plotManifold}
	\end{figure}
	Geometrically, a dynamical system \cref{eqn:dRstar} can be seen as a time dependent vector field $\mathcal{L}$ that assigns the velocity $\mathcal{L}(t,\RR)$ at time $t$ at each point $\RR$ of the ambient space $\MlM$ of  $l$-by-$m$ matrices  (\cref{fig:vectorField}). Similarly, any rank $r$ model order reduction can be viewed as a vector field $L$ that must be everywhere tangent to the manifold $\M$ of rank $r$ matrices. The corresponding dynamical system is
	\begin{equation}
		\label{eqn:dRmanifold}
		\dot{R}=L(t,R)\,\in \TT{R},
	\end{equation}
	where $\TT{R}$ denotes the tangent space of $\M$ at $R$.
	\begin{figure}[H]
		\centering
		\begin{subfigure}[t]{0.8\linewidth}
			\includegraphics[width=\linewidth]{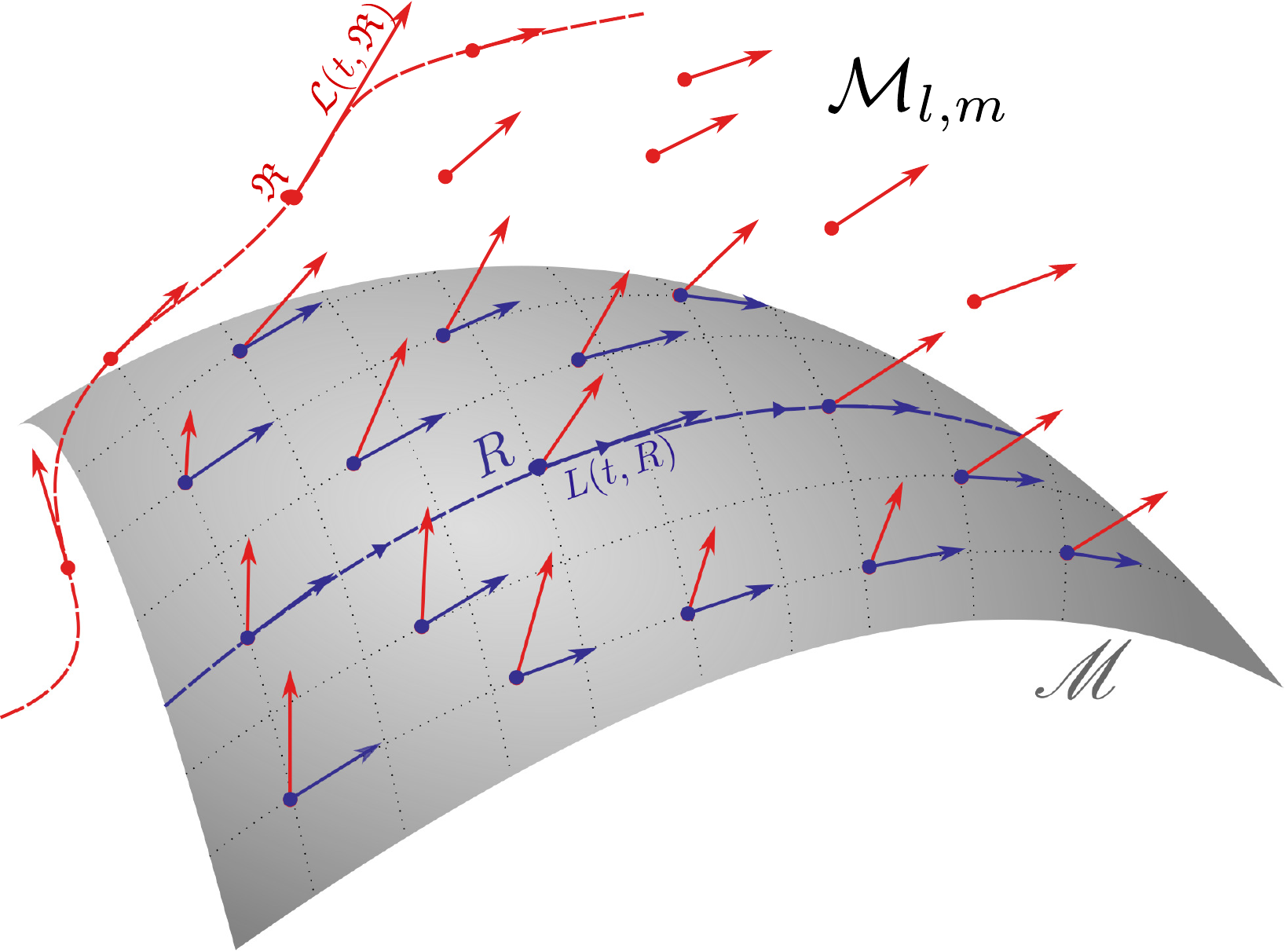}
			\caption{\small Dynamical systems as vector fields $\mathcal{L}$ in the ambient space $\MlM$ (in \emph{red}), or  $L$ tangent to  $\M$ (in \emph{blue}). The DO approximation sets $L$ to be  the  projection of  $\mathcal{L}$ tangent to $\M$.}
			\label{fig:vectorField}
		\end{subfigure}\quad
		\begin{subfigure}[t]{0.8\linewidth}
			\includegraphics[width=\linewidth]{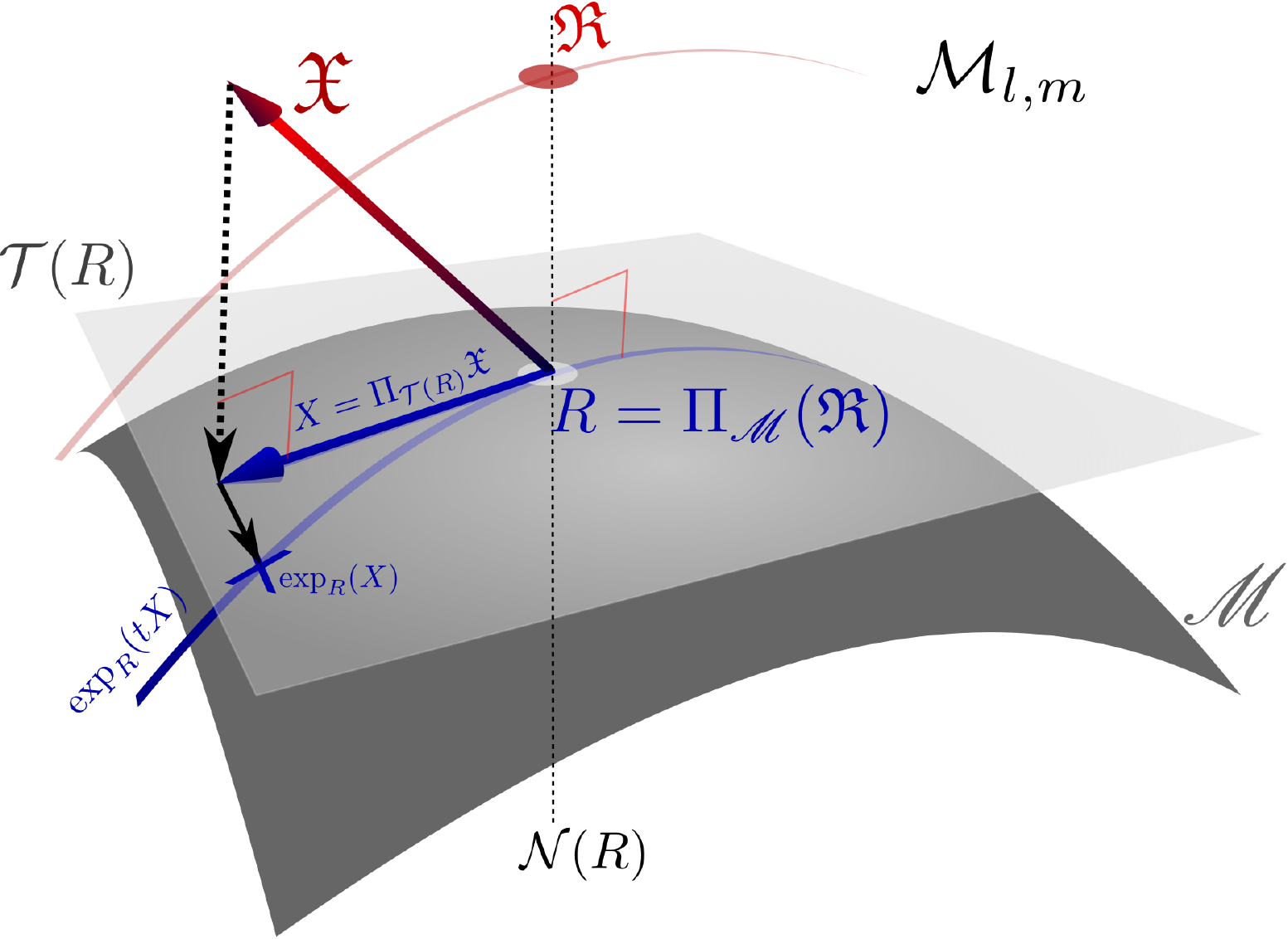}
			\caption{\small Geometric concepts of interest:  orthogonal projection $X=\Pi_\TT{R}\XX$ of an ambient vector $\XX\in\MlM$ onto the tangent space  $\TT{R}$ of $\M$ at $R$.   Orthogonal projection $\Pi_\M(R)$ of the point $\RR$ onto $\M$. Normal space $\NN{R}$. Geodesic curve $\exp_R(tX)$ starting from $R$ in the direction $X$.}
			\label{fig:tangentProj}
		\end{subfigure}
		\caption{\small Vector fields on the fixed rank manifold $\M$. Schematic adapted from \cite{wikiMcSush}.}
		\label{fig:ManifoldFields}
	\end{figure} From this point of view, ``combing the hair'' formed by the original
	vector field $\LL$ on the manifold $\M$, by setting  $L(t,R)$ to the time-dependent
	orthogonal projection of each vector $\XX=\LL(t,R)$ onto each tangent space
	$\TT{R}$ is nothing less than the DO approximation (\cref{fig:tangentProj}). As
	such, the DO-reduced dynamical system is optimal in the sense that the resulting
	vector field $L$ is the best dynamic tangent approximation of $\LL$ at every point
	$R\in\M$.

	Analyzing the error committed by the DO approximation can be done by understanding
	how the best rank $r$ approximation of the solution $\RR$ evolves
	\cite{koch2007dynamical,musharbash2015error}. This requires the time derivative of
	the truncated SVD as a function of $\dot{\RR}$.  Nevertheless, to the best of our knowledge, no explicit expression of the dynamical system satisfied by the
	best low rank approximation has been obtained in the literature. To address this
	gap, this article brings forward the following novelties. First, a more exhaustive
	study of the extrinsic geometry of the fixed rank manifold $\M$ is provided. This includes the
	characterization and derivation of principal curvatures and of their direct relation
	to singular values. Second, the geometric interpretation of the truncated SVD as an
	orthogonal projection onto $\M$ is utilized, so as to apply existing results
	relating the differential of this projection to the curvature of the manifold. It will be
	demonstrated in particular (\cref{thm:diffProjection}) that the truncated SVD is
	differentiable so long as the singular values of order $r$ and $r+1$ remain
	distinct, even if multiple singular values of lower order occur.  As a result, an
	explicit dynamical system is obtained for the evolution of the best low rank
	approximation of the solution $\RR(t)$ of \cref{eqn:dRstar}. This derivation finally also allows a sharpening of the 
	the initial error analysis of \cite{koch2007dynamical}.

	The article is organized as follows: the Riemannian geometric setting is specified
	in \cref{sec:riemanniansetup}. Parameterizations of $\M$ and of its tangent spaces
	are first recalled. Novel geometric characteristics 
	such as covariant derivative and geodesic equations are then derived.  In
	\cref{sec:curvature}, classical results on the differentiability of the orthogonal
	projection onto smooth embedded sub-manifolds \cite{gilbarg2015elliptic} are
	reviewed and reformulated in a framework that avoids the use of tensor notations.
	Curvatures with respect to a normal direction are defined, and their relation to
	the differential of the projection map is stated in \cref{thm:distanceDiff}. These
	results are applied in \cref{sec:curvatureMatrix} where the curvature of the fixed
	rank manifold $\M$ is characterized, and the new formula for the differential of the
	truncated SVD is provided.  The Dynamically Orthogonal approximation (DO) is
	studied in \cref{sec:DO}. Two justifications of the ``reasonable'' character of
	this approximation are given. First, it is shown that this reduced order model
	corresponds to the dynamical system that applies the SVD truncation at all
	instants. The error analysis performed by \cite{koch2007dynamical} is then extended
	and improved using the knowledge of the differential of the truncated SVD. The
	error committed by the DO approximation is shown to be controlled over large
	integration times provided the original solution remains close to the low rank
	manifold $\M$, in the sense that it remains far from the skeleton of $\M$. This
	geometric condition can be expressed as an explicit dependence of the error on the
	gaps between singular values of order $r$ and $r+1$.  Lastly, Riemannian matrix
	optimization on the fixed rank manifold equipped with the extrinsic geometry is
	considered in \cref{sec:numerical} as an alternative approach for tracking the
	truncated SVD. A novel dynamical system is proposed to compute the best low-rank
	approximation, that is shown to be convergent for almost any initial data.

	\subsection*{Notations} Important notations used in this paper are summed up below
	: \begin{table}[H] \centering \scalebox{0.8}{ \begin{tabular}{ll}
			$\mathcal{M}_{l,m}$  & Space of $l$-by-$m$ real matrices \\
			$\mathcal{M}_{m,r}^*$  & Space of $m$-by-$r$ matrices that have full rank\\
			$\textrm{rank}(R)$   & Rank of a matrix $R\in\MlM$\\ $\M=\{R\in
			\MlM|\mathrm{rank}(R)=r\}$ & Fixed rank matrix manifold\\ $\mathcal{O}_r=\{P\in
		\Mrr\;|\;P^TP=I\}$ & Group  of $r$-by-$r$ orthogonal matrices \\
		$\mathrm{St}_{l,r}=\{U\in\Mlr\;|\;U^TU=I\}$  &Stiefel Manifold\\ $R=UZ^T$ & Point
		$R\in\M$ with $U\in \mathrm{St}_{l,r}$ and $Z\in\MMr^*$\\ $\TT{R}$ & Tangent
		space at $R\in\M$\\ $X\in\TT{R}$ & Tangent vector $X$ at $R=UZ^T$\\
		$\mathcal{H}_{(U,Z)}$ & Horizontal space at $R=UZ^T$\\ $(X_U,X_Z)\in
		\mathcal{H}_{(U,Z)}$ & $X=X_UZ^T+UX_Z^T\in \TT{R}$ with \\ &
		$X_U\in\Mlr,\,X_Z\in\MMr$ and  $U^TX_U=0$\\		$\Pi_{\TT{R}}$ & Orthogonal
		projection onto the plane $\TT{R}$\\ $\Sk(\M)$ & Skeleton of $\M$ \\ $\Pi_\M$ &
		Orthogonal projection onto $\M$ (defined on $\MlM\backslash \Sk(\M)$) \\ $I$ &
		Identity  mapping \\ $A^T$  & Transpose of a square matrix $A$ \\ $\langle A,B
		\rangle=\Tr(A^TB)$ & Frobenius matrix scalar product\\ $||A||=\Tr(A^TA)^{1/2}$ &
		Frobenius norm\\		$\sigma_1(A)\>\dots\>\sigma_{\textrm{rank}(A)}(A)$  & Non
		zeros singular values of $A\in\MlM$\\ $\dot{R}=\D R/\D t$ &  Time derivative of a
		trajectory $R(t)$\\ $\DD_X f(R)$ & Differential of a function $f$ in direction
		$X$\\
			$\DD \Pi_{\TT{R}}(X)\cdot Y$ & Differential of the projection operator
			$\Pi_{\TT{R}}$ applied to $Y$\\
		\end{tabular}}
	\end{table} The differential of a smooth function $f$ at the point $R\in \MlM$
	(respectively $R\in\M$) in the direction $X\in\MlM$ (respectively $X\in \TT{R}$) is
	denoted $\DD_X f(R)$: \( \label{eqn:Diff_X_f} \DD_X f(R)=\left.\frac{\D }{\D
			t}f(R(t))\right|_{t=0}=\lim\limits_{\Delta t\rightarrow 0}\frac{f(R(t+\Delta
		t))-f(R(t))}{\Delta t},\) where $R(t)$ is a curve of $\MlM$ (respectively $\M$)
		such that $R(0)=R$ and $\dot{R}(0)=X$. The differential of the orthogonal
		projection operator $R\mapsto \Pi_\TT{R}$ at $R\in\M$, in the direction $X\in
		\TT{R}$ and applied to $Y\in \MlM$ is denoted $\DD \Pi_{\TT{R}}(X)\cdot Y$: \(
			\label{eqn:Diff_PiT_XY} \DD \Pi_{\TT{R}}(X)\cdot Y =  \left[\left.\frac{\D }{\D
					t}\Pi_{\TT{R(t)}}\right|_{t=0}\right](Y)=\left[\lim\limits_{\Delta
			t\rightarrow 0}\frac{\Pi_\TT{R(t+\Delta t)}-\Pi_\TT{R(t)}}{\Delta
	t}\right](Y),\) where $R(t)$ is a curve drawn on $\M$ such that $R(0)=R$  and
	$\dot{R}(0)=X$.

	\section{Riemannian set up: parameterizations, tangent-space, geodesics}
	\label{sec:riemanniansetup}

	This section establishes the geometric framework of low-rank approximation, by
	reviewing and unifying results sparsely available in
	\cite{koch2007dynamical,sapsis2009dynamically,musharbash2015error}, and by
	providing new expressions for classical geometric characteristics, namely
	geodesics and covariant derivative. It is not assumed that the reader is accustomed
	to differential geometry: necessary definitions and properties are recalled.
	Several concepts of this section are illustrated on \cref{fig:tangentProj}.

	\begin{definition} \label{def:manifold} The  manifold of $l$-by-$m$ matrices of
	rank $r$ is denoted by $\M$: \[ \M=\{R\in \MlM|\mathrm{rank}(R)=r\}.\]
\end{definition}
\begin{remark}
	The fact that $\M$ is a manifold is a consequence of the constant rank theorem
	(\cite{spivak1973comprehensive}, Th.10, chap.2, vol. 1) 
	whose assumptions 
	(the map $(U,Z)\mapsto UZ^T$ from $\Stlr\times\MMr^*$ to $\M$ is a submersion with
	differential of constant rank) translate 
	in the requirement that the candidate tangent spaces have constant dimension, as
	found later in \cref{prop:tangentSpace}. Detailed proofs are available in
	\cite{spivak1973comprehensive} (exercise 34, chap. 2, vol. 1) or
	\cite{vandereycken2013low} (Prop. 2.1). 
\end{remark}
The following lemma \cite{Piziak1999} fixes the parametrization of
$\M$ by conveniently representing its elements $R$ in terms of mode and coefficient
matrices, $U$ and $Z$, respectively.

	\begin{lemma}
		\label{lemma:rank}
		Any matrix $R\in\M$ can be decomposed as $R=UZ^T$ where $U\in \Stlr$ and $Z\in
		\MMr^*$,\, \emph{i.e.}\;$U^TU=I \textrm{ and }\mathrm{rank}(Z)=r$, respectively.
		Furthermore, this decomposition is unique modulo a rotation matrix $P\in O_r$,
		namely if $U_1,U_2\in \Mlr$, $Z_1,Z_2\in\MMr$, and $U_1^TU_1=U_2^TU_2=I$, then
		\begin{equation}
			\label{eqn:invariance}
			U_1Z_1^T=U_2Z_2^T\Leftrightarrow \exists P\in \mathrm{O}_r, U
			_1=U_2P\textrm{
		and }Z_1=Z_2P.\end{equation}
	\end{lemma}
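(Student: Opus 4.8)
The plan is to prove Lemma~\ref{lemma:rank} in two stages: first establishing the existence of a decomposition $R = UZ^T$ with $U \in \Stlr$ and $Z \in \MMr^*$, and then proving the uniqueness claim \eqref{eqn:invariance}.

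For existence, I would start from any rank-$r$ factorization of $R$. Since $R \in \MlM$ has rank exactly $r$, we may write $R = AB^T$ with $A \in \Mlr$ and $B \in \MMr$, both of full rank $r$ (for instance, take $A$ to be any $l \times r$ matrix whose columns form a basis of the column space of $R$, and let $B^T$ be the corresponding coordinate matrix). Now apply a thin QR decomposition (equivalently Gram--Schmidt) to $A$: write $A = UT$ where $U \in \Stlr$ has orthonormal columns and $T \in \Mrr$ is invertible (invertible because $A$ has full column rank $r$). Then $R = UTB^T = U(BT^T)^T$, so setting $Z = BT^T$ gives $R = UZ^T$ with $U^TU = I$; moreover $Z$ has rank $r$ since $B$ has rank $r$ and $T^T$ is invertible, so $Z \in \MMr^*$. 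Alternatively one could invoke the SVD directly, but the QR argument keeps things elementary.

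For uniqueness, suppose $U_1 Z_1^T = U_2 Z_2^T =: R$ with $U_i^T U_i = I$ and $Z_i \in \MMr$. The key observation is that both $U_1$ and $U_2$ have column space equal to the column space of $R$ (since $Z_i$ has rank $r$, the column space of $U_i Z_i^T$ equals the column space of $U_i$, which is all of $\mathbb{R}^r$-dimensional). Hence there exists an invertible $P \in \Mrr$ with $U_1 = U_2 P$. From $U_1^T U_1 = I$ and $U_2^T U_2 = I$ we get $P^T P = P^T U_2^T U_2 P = U_1^T U_1 = I$, so $P \in \mathrm{O}_r$. Substituting into $U_1 Z_1^T = U_2 Z_2^T$ gives $U_2 P Z_1^T = U_2 Z_2^T$, and left-multiplying by $U_2^T$ (using $U_2^T U_2 = I$) yields $P Z_1^T = Z_2^T$, i.e. $Z_2 = Z_1 P^T$, equivalently $Z_1 = Z_2 P$ since $P$ is orthogonal. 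The converse implication in \eqref{eqn:invariance} is immediate: if $U_1 = U_2 P$ and $Z_1 = Z_2 P$ then $U_1 Z_1^T = U_2 P P^T Z_2^T = U_2 Z_2^T$.

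The only mildly delicate point is the claim that $U_1$ and $U_2$ share the same column space and that the intertwining matrix $P$ is therefore invertible; this rests on the fact that $\mathrm{rank}(Z_i) = r$ forces $\mathrm{range}(U_i Z_i^T) = \mathrm{range}(U_i)$, a standard fact about ranks of matrix products that I would state and use without belaboring. Everything else is routine linear algebra, so I expect no real obstacle — the proof is essentially a packaging of the thin QR factorization together with the observation that orthonormal bases of a fixed subspace differ by an orthogonal change of coordinates.
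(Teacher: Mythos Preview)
Your proof is correct. The paper does not actually prove this lemma: it simply states it with a citation to \cite{Piziak1999}, so there is no in-paper argument to compare against. Your self-contained argument---rank factorization followed by thin QR for existence, and the ``same column space $\Rightarrow$ orthogonal change of basis'' observation for uniqueness---is the standard one and is fully adequate. The only point worth tightening is that the equivalence \eqref{eqn:invariance}, as literally stated with $Z_1,Z_2\in\MMr$ (no rank constraint), is false in general; you correctly rely on $\rank(Z_i)=r$, and you might note explicitly that this is forced by the ambient hypothesis $U_1Z_1^T=U_2Z_2^T\in\M$ (rank exactly $r$), since $\rank(U_iZ_i^T)\le\rank(Z_i)\le r$.
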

	In the following, the statement ``let $UZ^T\in\M$''  always implicitly assumes
	$U\in\Mlr$, $Z\in\MMr$, $U^TU=I$, and $\mathrm{rank}(Z)=r$. Other parameterizations
	of $\M$ are possible and give equivalent results \cite{mishra2014}.

	The tangent space $\TT{UZ^T}$ at a point $R=UZ^T$ is the set of all possible
	vectors tangent to smooth curves $R(t)=U(t)Z(t)^T$ drawn on the manifold $\M$.
	Therefore, such tangent vector at $R(0)=UZ^T$ is of the form
	$\dot{R}=\dot{U}Z^T+U\dot{Z}^T$, where $\dot{U}$ and $\dot{Z}$ are the time
	derivatives of the matrices $U(t)$ and $Z(t)$ at time $t=0$. In the following, the
	notations $X_U$, $X_Z$, and $X=X_UZ^T+UX_Z^T$ will be used to denote the tangent
	directions  $\dot{U}$, $\dot{Z}$, and $\dot{R}$ for the respective matrices $U$,
	$Z$ and $R$.  The orthogonality condition that $U^TU=I$ must hold for all times
	implies that $X_U$ must satisfy $\dot{U}^TU+U^T\dot{U}=X_U^TU+U^TX_U=0$.

	Nevertheless, this is not sufficient to parameterize uniquely tangent vectors $X$
	from the displacements $X_U$ and $X_Z$ for $U$ and $Z$: two different couples
	$(X_U,X_Z)\neq (X_U',X_Z')$ satisfying $X_U^TU+U^TX_U=X_U^{'T}U+U^TX_U'=0$ may
	exist for a single tangent vector $X=X_UZ^T+UX_Z^T=X_U'Z^T+UX_Z^{'T}$. Indeed,
	rotations $U\gets UP$ of the columns of the mode matrix $U$ do not change the
	subspace $\textrm{span}(\bm u_i)$ supporting the modal decomposition
	\cref{eqn:KLdecomposition}, and hence can be captured by updating the values of the
	coefficients $(\zeta_i)$ contained in the matrix $Z$ with the same rotation $Z\gets
	ZP$.  This translates infinitesimally in the tangent space by the invariance of
	tangent vectors $X=X_UZ^T+UX_Z^T$ under the transformations $X_U\gets X_U+U\Omega$
	and $X_Z\gets X_Z+Z\Omega$ for any skew-symmetric matrix $\Omega=-\Omega^T$. This
	can easily be seen by inserting the transformations into the expression for $X$ or by differentiating the relation $UZ^T=(UP)(ZP)^T$ with $\dot{P}=\Omega P$.  A unique
	parameterization of the tangent space can be obtained by fixing this infinitesimal
	rotation $\Omega$, for example by adding the condition that the reduced subspace
	spanned by the columns of $U$ must dynamically evolve orthogonally to itself, in
	other words by requiring $U^TX_U=0$. This gauge condition has thus been called
	``Dynamically Orthogonal'' condition by \cite{sapsis2009dynamically} and is at the
	origin of the name ``Dynamically Orthogonal approximation'' as further investigated
	in \cref{sec:DO}.

	\begin{proposition}
		\label{prop:tangentSpace}
		The tangent space of $\M$ at $R=UZ^T\in\M$ is the set \( \label{eqn:tangentSpace}
		\TT{UZ^T}=\{X_UZ^T+UX_Z^T\;|\; X_U\in\Mlr, \, X_Z\in\MMr, \, U^TX_U+X_U^TU=0\}.\)
		$\TT{UZ^T}$ is uniquely parameterized by the \emph{horizontal space} \(
		\label{eqn:horizSpace} \mathcal{H}_{(U,Z)}=\{ (X_U,X_Z)\in\Mlr\times
	\MMr\;|\;U^TX_U=0\}, \) that is for any tangent vector $X\in \TT{UZ^T}$, there
	exists a unique $(X_U,X_Z)\in \mathcal{H}_{(U,Z)}$ such that $X=X_UZ^T+UX_Z^T$. As
	a consequence $\M$ is a smooth manifold of dimension
	$\mathrm{dim}(\mathcal{H}_{(U,Z)})=(l+m)r-r^2$.
	\end{proposition}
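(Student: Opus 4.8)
The plan is to establish \cref{prop:tangentSpace} in three stages: first verify the characterization \cref{eqn:tangentSpace} of $\TT{UZ^T}$, then prove that the horizontal space $\HH$ parameterizes it uniquely, and finally read off the dimension. For the first stage, I would show the inclusion $\subseteq$ by differentiating a curve: given any smooth curve $R(t)=U(t)Z(t)^T$ on $\M$ with $R(0)=UZ^T$, $U(t)^TU(t)=I$ and $\rank(Z(t))=r$, setting $X_U=\dot U(0)$ and $X_Z=\dot Z(0)$ gives $X=\dot R(0)=X_UZ^T+UX_Z^T$, and differentiating $U(t)^TU(t)=I$ at $t=0$ yields $X_U^TU+U^TX_U=0$. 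For the reverse inclusion $\supseteq$, given $(X_U,X_Z)$ with $U^TX_U+X_U^TU=0$, I would exhibit an explicit curve realizing $X_UZ^T+UX_Z^T$ as a tangent vector: for instance $U(t)=\mathrm{qf}(U+tX_U)$ (the Q-factor, which is smooth near $t=0$ since $U+tX_U$ has full rank for small $t$ and has derivative $X_U$ at $t=0$ precisely because of the skew condition) and $Z(t)=Z+tX_Z$ (which stays full rank for small $t$); then $R(t)=U(t)Z(t)^T$ lies on $\M$ and has the prescribed derivative.

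For the second stage, uniqueness of the horizontal representative, I would argue as in the discussion preceding the proposition: the map $(X_U,X_Z)\mapsto X_UZ^T+UX_Z^T$ from $\Mlr\times\MMr$ onto $\TT{UZ^T}$ has kernel exactly $\{(U\Omega,Z\Omega)\mid \Omega=-\Omega^T\}$. To see this, suppose $X_UZ^T+UX_Z^T=0$; multiplying on the right by $Z(Z^TZ)^{-1}$ (which exists since $\rank(Z)=r$) and using that $U$ has orthonormal columns one solves for $X_U$ and $X_Z$ in terms of a single matrix, and the constraint forces that matrix to be skew-symmetric. Restricting to the subspace $\HH=\{(X_U,X_Z)\mid U^TX_U=0\}$ kills this kernel: if $(U\Omega,Z\Omega)\in\HH$ then $U^T(U\Omega)=\Omega=0$. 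Hence the restricted map is injective, and it is surjective because from the characterization \cref{eqn:tangentSpace} any $X$ with a representative $(X_U,X_Z)$ (where only $U^TX_U+X_U^TU=0$) can be replaced by $(X_U-U(U^TX_U),X_Z+Z(U^TX_U))$, which lies in $\HH$, and one checks $U^TX_U$ is skew so this substitution leaves $X$ unchanged.

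For the third stage, the dimension count: $\M$ being a submersive image (per the remark following \cref{def:manifold}) is a smooth manifold whose dimension equals $\dim\TT{UZ^T}=\dim\HH$. The space $\HH=\{(X_U,X_Z)\mid U^TX_U=0\}$ is the product of $\{X_U\in\Mlr\mid U^TX_U=0\}$, which has dimension $lr-r^2$ (it is the kernel of the surjective map $X_U\mapsto U^TX_U$ onto $\Mrr$), with $\Mrr$... rather $\MMr$, which has dimension $mr$; summing gives $(l+m)r-r^2$.

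I expect the main obstacle to be the surjectivity half of the first stage, i.e.\ producing an honest smooth curve on $\M$ with a prescribed tangent direction — one must be careful that the $Q$-factor map is smooth and has the right derivative, which is exactly where the infinitesimal skew-symmetry condition $U^TX_U+X_U^TU=0$ is used; the rest is linear algebra. An alternative that sidesteps the $Q$-factor is to note the submersion $(U,Z)\mapsto UZ^T$ from $\Stlr\times\MMr^*$ already guarantees surjectivity onto the tangent space from curves in the source, and then identify the image with \cref{eqn:tangentSpace} by the kernel computation above; I would present whichever is cleaner.
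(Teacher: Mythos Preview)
Your proposal is correct and would yield a valid proof, but it takes a more structural route than the paper. The paper's argument is very brief: it takes a tangent vector $X=\dot U Z^T+U\dot Z^T$ and directly rewrites it as $X_UZ^T+UX_Z^T$ with $X_U=(I-UU^T)\dot U$ and $X_Z^T=\dot Z^T+U^T\dot U Z^T$, establishing existence of the horizontal representative in one line; uniqueness is then shown by the explicit inversion formulas $X_Z=X^TU$ and $X_U=(I-UU^T)XZ(Z^TZ)^{-1}$, which read $(X_U,X_Z)$ off from $X$ directly. Your approach instead computes the kernel of $(X_U,X_Z)\mapsto X_UZ^T+UX_Z^T$ (the skew gauge $\{(U\Omega,Z\Omega)\}$) and shows the horizontal slice is transverse to it. This is more conceptual and ties in nicely with the quotient-manifold picture, but is longer; the paper's inversion formulas are exactly what reappear later as the tangent projection \cref{eqn:projectionMap}, so its route is also more economical for the sequel. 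Your careful treatment of the $\supseteq$ inclusion via an explicit $Q$-factor curve is not addressed in the paper, which effectively relies on the submersion remark following \cref{def:manifold}; your alternative of invoking that submersion directly is closer to what the paper actually does.

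One small slip to fix: in your surjectivity substitution the second component should carry a minus sign, i.e.\ $(X_U-U(U^TX_U),\,X_Z-Z(U^TX_U))$. Both components must shift by the \emph{same} skew $\Omega=U^TX_U$ for $X$ to be preserved; with your plus sign the image changes by $-2U(U^TX_U)Z^T$.
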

	\begin{proof}
		(see also \cite{koch2007dynamical,absil2009optimization}) One can always write a
		tangent vector $X$ as 
		\[\begin{aligned}X &=U\dot{Z}^T+\dot{U}Z^T\\
											 &=U(\dot{Z}^T+U^T\dot{U}Z^T)
		+((I-UU^T)\dot{U})Z^T=X_UZ^T+UX_Z^T,\end{aligned}\] for some $\dot{U}\in\Stlr$ and
		$\dot{Z}\in\MMr$ with $X_U=(I-UU^T)\dot{U}Z^T$  satisfying $X_U^TU=0$ and
		$X_Z^T=\dot{Z}^T+U^T\dot{U}Z^T$. This implies
		$\TT{UZ^T}=\{X_UZ^T+UX_Z^T|(X_U,X_Z)\in\mathcal{H}_{(U,Z)}\}$.  Furthermore, if
		$X=UX_Z^T+X_UZ^T$ with $U^TX_U=0$, then the relations $X_Z=X^TU$ and
		$X_U=(I-UU^T)XZ(Z^TZ)^{-1}$ show that $(X_U,X_Z)\in\mathcal{H}_{(U,Z)}$ is
		defined uniquely from $X$.
	\end{proof}
	\begin{remark}
		The denomination ``\emph{horizontal space}'' for the set $\mathcal{H}_{(U,Z)}$
		\cref{eqn:horizSpace} refers to the definition of a non-ambiguous representation
		of the tangent space $\TT{UZ^T}$ \cref{eqn:tangentSpace}.  This notion is
		developed rigorously in the theory of quotient manifolds
		e.g.\;\cite{mishra2014,edelman1998}.
	\end{remark}
	In the following, the notation $X=(X_U,X_Z)$ is used equivalently to denote a
	tangent vector $X=X_UZ^T+UX_Z^T\in \TT{UZ^T}$, where  $U^TX_U=0$ is  implicitly
	assumed.

	A metric is needed to define how distances are measured on the manifold, by
	prescribing a smoothly varying scalar product on each tangent space.  In
	\cite{mishra2014} and others in matrix optimization
	e.g.\;\cite{absil2012projection,vandereycken2013low,chechik2011online}, one uses
	the metric induced by the parametrization of the manifold $\M$: the norm of a
	tangent vector $(X_U,X_Z)\in\HH$ is defined to be
	$||(X_U,X_Z)||^2=||X_U||_{\Stlr}^2+||X_Z||_{\MMr}^2$ where $||\;||_{\Stlr}$ is a
	canonical norm on the Stiefel Manifold (see \cite{edelman1998}) and $||\;
	||_{\MMr}$ is the Frobenius norm on $\MMr$.  In this work, one is rather interested
	in the metric inherited from the ambient full space $\MlM$, since it is the metric
	used to estimate the distance from a matrix $\RR\in\MlM$ to its best $r$-rank
	approximation, namely the error committed by the truncated SVD.
	\begin{definition}	 At each point $UZ^T\in\M$, the metric $g$ on $\M$ is the scalar product acting on the tangent space $\TT{UZ^T}$ that is inherited from the scalar product of $\MlM$~:
		\begin{equation} \label{eqn:metric} \begin{aligned}g((X_{U},X_{Z}),(Y_{U},Y_{Z})) & =\Tr((X_{U}Z^T+UX_Z^{T})^T(Y_{U}Z^T+UY_Z^{T}))
				\\ &=\Tr(Z^TZX_U^{T}Y_U+X_Z^{T}Y_Z).\end{aligned}\end{equation}
	\end{definition}

	A main object of this paper is the orthogonal projection $\Pi_\TT{R}$ onto the
	tangent space $\TT{R}$ at a point $R$ on $\M$. This map projects displacements
	$\XX=\dot{\RR}\in\MlM$ of a matrix $\RR$ of the ambient space $\MlM$ to the tangent
	directions $X=\Pi_\TT{R}\XX\in \TT{R}$.
	\begin{proposition}
		At every point $UZ^T\in\M$, the orthogonal projection $\Pi_{\TT{UZ^T}}$ onto the tangent space $\TT{UZ^T}$ is the application
		\begingroup
		\renewcommand*{\arraystretch}{1.2}
		\begin{equation} \label{eqn:projectionMap}\begin{array}{ccccc}
				\Pi_\TT{UZ^T} &: & \MlM & \rightarrow & \HH \\ & & \XX & \mapsto & ((I-UU^T)\XX Z(Z^TZ)^{-1},\XX^TU).\end{array}\end{equation}
		\endgroup
	\end{proposition}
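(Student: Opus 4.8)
The plan is to verify directly that the map $\Pi_{\TT{UZ^T}}$ defined by \cref{eqn:projectionMap} is the orthogonal projection onto $\TT{UZ^T}$ with respect to the ambient Frobenius scalar product, by checking the two defining properties: (i) the output genuinely lies in $\TT{UZ^T}$, and (ii) the ``error'' $\XX - \Pi_{\TT{UZ^T}}\XX$ is orthogonal to every tangent vector. First, given $\XX\in\MlM$, set $X_U := (I-UU^T)\XX Z(Z^TZ)^{-1}$ and $X_Z := \XX^T U$. One immediately checks $U^T X_U = U^T(I-UU^T)\XX Z(Z^TZ)^{-1} = 0$ since $U^TU = I$, so $(X_U,X_Z)\in\HH$ and hence $X := X_U Z^T + U X_Z^T \in \TT{UZ^T}$ by \cref{prop:tangentSpace}. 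This establishes (i), and shows the codomain is correctly stated as $\HH$.

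Second, I would show $\langle \XX - X, Y\rangle = 0$ for every $Y = Y_U Z^T + U Y_Z^T \in \TT{UZ^T}$, where $U^T Y_U = 0$, $Y_U^T U = 0$. Writing $\langle \XX - X, Y\rangle = \Tr\big((\XX - X_UZ^T - UX_Z^T)^T(Y_UZ^T + UY_Z^T)\big)$, I expand into four traces and use cyclicity together with $U^TU = I$, $U^T X_U = 0 = U^T Y_U$. The cross terms involving $U^T Y_U$ or $Y_U^T U$ vanish. The surviving terms group into a coefficient of $Y_U$ and a coefficient of $Y_Z$: the $Y_Z$-part reduces to $\Tr\big((\XX^T U - X_Z)^T Y_Z\big) = 0$ by the very choice $X_Z = \XX^T U$; the $Y_U$-part reduces to $\Tr\big(Z(Z^TZ)^{-1}\big(Z^TZ X_U^T - Z^T\XX^T(I-UU^T)\big)Y_U^T\big)$ — wait, more cleanly: it is $\Tr\big(((I-UU^T)\XX Z - X_U Z^TZ)^T Y_U\big)$, and substituting $X_U Z^TZ = (I-UU^T)\XX Z(Z^TZ)^{-1}Z^TZ = (I-UU^T)\XX Z$ makes it vanish. (Here $Z^TZ$ is invertible precisely because $\rank(Z) = r$, which is part of the standing hypothesis ``$UZ^T\in\M$''.) Since $\Pi_{\TT{UZ^T}}$ is manifestly linear, properties (i) and (ii) together characterize it as the orthogonal projection, completing the proof.

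The only mildly delicate point — the ``main obstacle'', though it is routine — is bookkeeping the four-term trace expansion and correctly discarding the terms killed by $U^T Y_U = 0$ and $Y_U^T U = 0$; everything else follows from the defining identities of the two projector factors $I - UU^T$ and $Z(Z^TZ)^{-1}Z^T$ and the invertibility of $Z^TZ$. One may alternatively, and perhaps more transparently, observe that the formula simply splits $\XX$ according to the ambient orthogonal decomposition $\MlM = (\mathrm{span}\,U)\otimes\mathbb{R}^m \;\oplus\; (\mathrm{span}\,U)^\perp\otimes\mathbb{R}^m$: the component of $\XX$ in $U$-column space contributes $U(\XX^TU)^T = UX_Z^T$, while its complementary component $(I-UU^T)\XX$ is further projected onto $\mathrm{span}(Z)$ within row space by $Z(Z^TZ)^{-1}Z^T$, yielding $X_U Z^T$ with $X_U = (I-UU^T)\XX Z(Z^TZ)^{-1}$; this exhibits $\Pi_{\TT{UZ^T}}$ as a composition of two commuting ambient orthogonal projections restricted appropriately, from which both claimed properties are evident.
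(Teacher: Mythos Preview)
Your proof is correct. The paper frames the argument as a minimization---characterizing $\Pi_{\TT{UZ^T}}\XX$ as the unique minimizer of $J(X_U,X_Z)=\tfrac{1}{2}\|\XX-X_UZ^T-UX_Z^T\|^2$ over $\mathcal{H}_{(U,Z)}$ and reading off $X_U,X_Z$ from the vanishing of $\partial J/\partial X_U$ and $\partial J/\partial X_Z$---whereas you verify the two defining properties of an orthogonal projection directly; but the orthogonality condition you check in step~(ii) is precisely the first-order optimality condition the paper writes, so the underlying algebra is identical.
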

	\begin{proof} (see also \cite{koch2007dynamical})
		$\Pi_\TT{R} \XX$ is obtained as the unique minimizer of the convex functional $J(X_U,X_Z)=\frac{1}{2}||\XX-X_UZ^T-UX_Z^T||^2$ on the space $\mathcal{H}_{(U,Z)}$.
		The minimizer $(X_U,X_Z)$ is characterized by the vanishing of the gradient of $J$:
		\[ \forall \Delta\in\Mlr,\;  \Delta^TU=0\Rightarrow \frac{\partial J}{\partial X_U}\cdot \Delta=-\langle \XX-X_UZ^T-UX_Z^T,\Delta Z^T \rangle=0,\]
		\[ \forall \Delta\in\MMr, \;\frac{\partial J}{\partial X_Z}\cdot \Delta=-\langle \XX-X_UZ^T-UX_Z^T,U \Delta^T \rangle=0, \]
		yielding respectively $X_U=(I-UU^T)\XX Z(Z^TZ)^{-1}$  and $X_Z=\XX^TU$.
	\end{proof}
	The orthogonal complement of the tangent space $\TT{R}$ is obtained from the identity $(I-\Pi_\TT{UZ^T})\cdot \XX=(I-UU^T)\XX(I-Z(Z^TZ)^{-1}Z^T)$:
	\begin{definition}
		The normal space $\NN{R}$ of $\M$ at $R=UZ^T$ is defined as the orthogonal complement to the tangent space $\TT{R}$. For the fixed rank manifold $\M$:
		\begin{equation} \label{eqn:normalSpaceDef}\begin{aligned}\NN{R}& =\{ N\in\MlM| (I-UU^T)N(I-Z(Z^TZ)^{-1}Z^T)=N\}\\
				&=\{ N\in\MlM\;|\;U^TN=0\textrm{ and }NZ=0\}.\end{aligned}\end{equation}
	\end{definition}
	In model order reduction, a matrix $R=UZ^T\in\M$ is usually a low rank-$r$ approximation of a full rank matrix $\RR\in\MlM$. The following proposition shows that the normal space at $R$, $\NN{R}$, can be understood as the set of all possible completions of the approximation \cref{eqn:KLdecomposition}:
	\begin{proposition}
		\label{prop:normalSpace}
		Let $N$ be a given normal vector $N\in\NN{R}$ at $R=UZ^T\in\M$ and denote
		$k=\rank(N)$.
		Then there exists
		an orthonormal basis of vectors $(u_i)_{1\<i\<l}$ in $\R^l$, an orthonormal basis
		$(v_i)_{1\<i\<m}$ of $\R^m$, and $r+k$ non zero singular values
		$(\sigma_i)_{1\<i\<r+k}$ such that
		\begin{equation}
			\label{eqn:singularVectors}
			UZ^T=\sum_{i=1}^r \sigma_i u_
			iv_i^T \textrm{
		and } N=\sum_{i=1}^k \sigma_{r+i}u_{r+i}v_{r+i}^T.\end{equation}
	\end{proposition}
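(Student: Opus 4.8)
The plan is to extract the geometric content of the normal-space condition \cref{eqn:normalSpaceDef} and then glue together the singular value decompositions of $R$ and $N$. By \cref{eqn:normalSpaceDef}, $N\in\NN{R}$ is equivalent to $U^TN=0$ and $NZ=0$. Since $U^TU=I$ and $\rank(Z)=r$, the column space of $R=UZ^T$ is exactly $\mathrm{span}(U)$ (the span of the columns of $U$) and its row space, viewed as a subspace of $\R^m$, is exactly $\mathrm{span}(Z)$. The relation $U^TN=0$ then says that every column of $N$ is orthogonal to every column of $U$, i.e. the column space of $N$ is orthogonal to the column space of $R$ inside $\R^l$; the relation $NZ=0$ says that every row of $N$, regarded as a vector of $\R^m$, is orthogonal to every column of $Z$, i.e. the row space of $N$ is orthogonal to the row space of $R$ inside $\R^m$. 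In particular these are orthogonal subspaces of dimensions $r$ and $k$, so $r+k\<\min(l,m)$, which leaves enough room for the completions performed at the end.

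Next I would take a compact singular value decomposition of each matrix separately. Write $R=\sum_{i=1}^r\sigma_i\,u_iv_i^T$ with $\sigma_1,\dots,\sigma_r>0$ and $(u_i)_{1\<i\<r}$, $(v_i)_{1\<i\<r}$ orthonormal families spanning $\mathrm{col}(R)$ and the row space of $R$ respectively; similarly write $N=\sum_{j=1}^k\sigma_{r+j}\,u_{r+j}v_{r+j}^T$ with $\sigma_{r+1},\dots,\sigma_{r+k}>0$ and $(u_{r+j})_{1\<j\<k}$, $(v_{r+j})_{1\<j\<k}$ orthonormal families spanning $\mathrm{col}(N)$ and the row space of $N$. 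Because $\mathrm{col}(R)\perp\mathrm{col}(N)$, the concatenated family $(u_i)_{1\<i\<r+k}$ is orthonormal, and because the row spaces of $R$ and $N$ are orthogonal, so is $(v_i)_{1\<i\<r+k}$. Completing each of these families (arbitrarily) to an orthonormal basis of $\R^l$, respectively $\R^m$, produces the bases $(u_i)_{1\<i\<l}$ and $(v_i)_{1\<i\<m}$ and the $r+k$ nonzero singular values $(\sigma_i)_{1\<i\<r+k}$, which by construction satisfy the two identities in \cref{eqn:singularVectors}. Note that no global ordering of the $\sigma_i$ across the two blocks is needed, only positivity.

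The argument is essentially bookkeeping with subspaces, and I do not expect a genuine obstacle. The one point that warrants care is the translation of the algebraic conditions $U^TN=0$ and $NZ=0$ into orthogonality of the column and row spaces of $R$ and $N$ — which relies on $Z$ having full column rank so that $\mathrm{col}(R)=\mathrm{span}(U)$ and the row space of $R$ equals $\mathrm{span}(Z)$ — together with the ensuing dimension count $r+k\<\min(l,m)$ that makes the extension to full orthonormal bases legitimate.
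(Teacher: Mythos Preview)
Your proof is correct and follows essentially the same approach as the paper: both arguments exploit that $U^TN=0$ and $NZ=0$ make the column and row spaces of $R$ and $N$ mutually orthogonal, then glue together the SVDs of $R$ and $N$ and complete to full orthonormal bases. Your presentation is in fact more explicit than the paper's, which phrases the same idea as choosing the columns of $U$ among the zero-singular-value left singular vectors of a full SVD of $N$.
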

	\begin{proof}
		Consider $N=U_N\Theta V_N^T$ the SVD decomposition of $N$ \cite{horn_johnson_1991}. Since $U^TN=0$, $r$ columns of $U_N$ are spanned by $U$ and associated with zero singular values of $N$, therefore $u_i$ is obtained from the columns of $U$ for $1\<i\<r$ and from the left singular vectors of $N$ associated with non zero singular values for $r+1\<i\<r+k$, $k \ge 0$. The vectors $v_i$ and $v_{r+j}$ are obtained similarly.
		The singular values $\sigma_i$ are obtained by reunion of the respective $r$ and $k$ non-zeros singular values of $Z$  and $N$.
	\end{proof}
	In differential geometry, one distinguishes the geometric properties that are \emph{intrinsic}, \emph{i.e.}\;that depend only on the metric $g$ defined on the manifold, from the ones that are \emph{extrinsic}, \emph{i.e.}\;that depend on the ambient space in which the manifold $\M$ is defined. The following proposition recalls the link between the extrinsic projection $\Pi_\TT{R}$ and the intrinsic notion of derivation onto a manifold. For embedded manifolds, \emph{i.e.}\;defined as subsets of an ambient space, the covariant derivative at $R\in\M$ is obtained by projecting the usual derivative onto the tangent space $\TT{R}$, and the Christoffel symbol corresponds to the normal component that has been removed \cite{edelman1998}.
	\begin{proposition}
		\label{prop:connection}Let $X$ and $Y$ be two tangent vector fields defined on a neighborhood of $R\in \M$. The covariant derivative $\nabla_X Y$ with respect to the metric inherited from the ambient space is the projection of $\DD_X Y$ onto the tangent space $\TT{R}$:
		\[ \nabla_X Y=\Pi_\TT{R} (\DD_X Y).\]
		The Christoffel symbol $\Gamma(X,Y)$  is defined by the relationship  $\nabla_X Y=\DD_X Y+\Gamma(X,Y)$ and is characterized by the formula
		\[ \Gamma(X,Y)=-(I-\Pi_\TT{R})\DD_X Y=-\DD \Pi_\TT{R}(X)\cdot Y.\]
		The Christoffel symbol is symmetric: $\Gamma(X,Y)=\Gamma(Y,X)$.
	\end{proposition}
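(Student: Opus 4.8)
\emph{Proof plan.} The plan is to identify $\nabla$ with the tangential part of the ordinary directional derivative on the flat ambient space by invoking the uniqueness of the Levi--Civita connection, and then to read off the two formulas for $\Gamma$ by differentiating, along a curve of $\M$, the idempotency relation $\Pi_\TT{R}Y=Y$ that holds for tangent fields.

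First I would note that $\MlM$, endowed with the Frobenius product $\langle A,B\rangle=\Tr(A^TB)$, is a Euclidean (hence flat) space, so its own Levi--Civita connection is simply the componentwise directional derivative $\DD_X$. I would then set $\widehat{\nabla}_X Y:=\Pi_\TT{R}(\DD_X Y)$ for tangent vector fields $X,Y$ on $\M$ and verify that $\widehat{\nabla}$ is the Levi--Civita connection of $(\M,g)$. It is an affine connection: $\R$-bilinear, tensorial in $X$ because $\DD_{fX}Y=f\,\DD_X Y$, and satisfying the Leibniz rule in $Y$ because $\DD_X(fY)=(\DD_X f)Y+f\,\DD_X Y$ and $\Pi_\TT{R}Y=Y$ (as $Y$ is tangent). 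It is metric-compatible: since $g$ is the restriction of $\langle\cdot,\cdot\rangle$, one has $\DD_X\,g(Y,W)=\langle\DD_X Y,W\rangle+\langle Y,\DD_X W\rangle$ for tangent fields $X,Y,W$, and because $W$ and $Y$ are tangent the two terms on the right are $g(\widehat{\nabla}_X Y,W)$ and $g(Y,\widehat{\nabla}_X W)$. It is torsion-free: in flat space $\DD_X Y-\DD_Y X$ is the Lie bracket $[X,Y]$, which is again tangent to $\M$ because the bracket of two vector fields tangent to a submanifold is tangent to it, so $\widehat{\nabla}_X Y-\widehat{\nabla}_Y X=\Pi_\TT{R}[X,Y]=[X,Y]$. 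Uniqueness of the Levi--Civita connection then forces $\nabla=\widehat{\nabla}$, which is the first claim.

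Next I would extract the Christoffel formulas. From $\nabla_X Y=\DD_X Y+\Gamma(X,Y)$ and $\nabla_X Y=\Pi_\TT{R}(\DD_X Y)$ I get at once $\Gamma(X,Y)=-(I-\Pi_\TT{R})\DD_X Y$, i.e.\ minus the normal component of $\DD_X Y$. For the identity $\Gamma(X,Y)=-\DD\Pi_\TT{R}(X)\cdot Y$, I would take a curve $R(t)$ on $\M$ with $R(0)=R$, $\dot R(0)=X$, restrict $Y$ to it, and use that $Y(t)\in\TT{R(t)}$ for all $t$ gives $\Pi_{\TT{R(t)}}\big(Y(t)\big)=Y(t)$ identically; differentiating at $t=0$ — legitimate since $R\mapsto\Pi_\TT{R}$ is smooth, as is plain from the closed form \eqn{projectionMap} — yields $\DD\Pi_\TT{R}(X)\cdot Y+\Pi_\TT{R}(\DD_X Y)=\DD_X Y$, hence $\DD\Pi_\TT{R}(X)\cdot Y=(I-\Pi_\TT{R})\DD_X Y=-\Gamma(X,Y)$. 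Symmetry is then immediate: $\Gamma(X,Y)-\Gamma(Y,X)=-(I-\Pi_\TT{R})(\DD_X Y-\DD_Y X)=-(I-\Pi_\TT{R})[X,Y]=0$, again because $[X,Y]$ is tangent.

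I expect every step to be routine; the single point that needs care — and the closest thing to an obstacle — is justifying the differentiation of the projector-valued map $t\mapsto\Pi_{\TT{R(t)}}$, which relies on $R\mapsto\Pi_\TT{R}$ being a smooth map on $\M$. Here this is elementary since $\Pi_\TT{R}$ is given in closed form by \eqn{projectionMap} and depends smoothly on the factors $(U,Z)$; the more general differentiability of orthogonal projections onto embedded manifolds is in any case the subject of \cref{sec:curvature}.
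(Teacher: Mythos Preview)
Your proof is correct and follows the standard route. The paper itself does not give an independent argument: its entire proof is the one-line reference ``See \cite{spivak1973comprehensive}, Vol.~3, Ch.~1,'' and what you have written is precisely the classical verification found there---checking that the tangential projection of the ambient flat connection is an affine, metric-compatible, torsion-free connection, invoking uniqueness of Levi--Civita, and then differentiating $\Pi_{\TT{R(t)}}Y(t)=Y(t)$ to identify the Christoffel term with $-\DD\Pi_\TT{R}(X)\cdot Y$. There is nothing to correct; your care about the smoothness of $R\mapsto\Pi_\TT{R}$ via \cref{eqn:projectionMap} is appropriate and sufficient.
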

	\begin{proof}
		See \cite{spivak1973comprehensive}, Vol.3, Ch.1.
	\end{proof}

	\begin{remark}
		An important feature of this definition is that the Christoffel symbol $\Gamma(X,Y)=-\DD\Pi_\TT{R}(X)\cdot Y$,  depends only on the projection map $\Pi_T$ at the point $R$ and not on neighboring values of the tangent vectors $X,Y$, which  is \emph{a priori} not clear from the equality $\Gamma(X,Y)=-(I-\Pi_\TT{R})\DD_X Y$. The Christoffel symbol $\Gamma(X,Y)$ is computed explicitly for the matrix manifold $\M$ in \cref{rmk:christoffel}.
	\end{remark}

	The covariant derivative allows to obtain equations for the geodesics of the manifold $\M$. These geodesics (\cref{fig:tangentProj}) are the shortest paths among all possible smooth curves drawn on $\M$ joining two points sufficiently close.
	Mathematically, they are curves $R(t)=U(t)Z(t)$ characterized by a velocity $\dot{R}=\dot{U}Z^T+U\dot{Z}^T$ that is stationary under the covariant derivative \cite{spivak1973comprehensive}, \emph{i.e.}\;$\nabla_{\dot{R}} \dot{R}=0$. Since $\DD_{\dot{R}}\dot{R}=\ddot{R}$, this leads to
	\begin{equation}
		\label{eqn:geodesicDef}\nabla_{\dot{R}} \dot{R}=\ddot{R}-\DD \Pi_\TT{R}(\dot{R})\cdot \dot{R}=0.
	\end{equation}

	\begin{theorem}
		Consider $X=(X_U,X_Z)\in \mathcal{H}_{(U,Z)}$ and $Y=(Y_U,Y_Z)\in \mathcal{H}_{(U,Z)}$  two tangent vector fields.
		The covariant derivative $\nabla_{X}Y$ on $\M$ is given by
		\begin{equation}\label{eqn:connection}  \nabla_X Y=(D_X Y_U+UX_U^TY_U+(X_UY_Z^T+Y_UX_Z^T)Z(Z^TZ)^{-1},\, D_XY_Z-ZY_U^TX_U).\end{equation}
		Therefore,  geodesic equations on $\M$ are given by
		\begin{equation}
			\label{eqn:geodesics} \left\{\begin{array}{rl}\ddot{U}+U\dot{U}^T\dot{U}+2\dot{U}\dot{Z}^TZ(Z^TZ)^{-1}= & 0\\
				\ddot{Z}-Z\dot{U}^T\dot{U}= & 0.\end{array}\right.\end{equation}
	\end{theorem}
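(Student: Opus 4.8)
The plan is to apply \cref{prop:connection}: the covariant derivative $\nabla_X Y$ is obtained by differentiating $Y$ as an ambient matrix field and then projecting the result onto $\TT{R}$ with the explicit formula \cref{eqn:projectionMap}. The whole argument is then a bookkeeping computation in horizontal coordinates, the only real subtlety being the careful use of the gauge condition $U^TX_U=0$.

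First I would fix a smooth lift $(U(t),Z(t))$ of a curve $R(t)$ with $R(0)=R$ and $\dot R(0)=X$, chosen in the dynamically orthogonal gauge so that $U(t)^T\dot U(t)=0$; then $X_U=\dot U(0)$, $X_Z=\dot Z(0)$, and the vector field reads $Y(R(t))=Y_U(t)Z(t)^T+U(t)Y_Z(t)^T$ with $U(t)^TY_U(t)=0$ identically. Differentiating the last constraint at $t=0$ gives the single identity the gauge contributes, namely $U^T\DD_X Y_U=-X_U^TY_U$, equivalently $(I-UU^T)\DD_X Y_U=\DD_X Y_U+UX_U^TY_U$. The product rule then gives
\[\DD_X Y=(\DD_X Y_U)Z^T+Y_UX_Z^T+X_UY_Z^T+U(\DD_X Y_Z)^T.\]

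Next I would feed this into \cref{eqn:projectionMap} term by term. In the $U$-component $(I-UU^T)(\DD_X Y)Z(Z^TZ)^{-1}$, the term $U(\DD_X Y_Z)^T$ is annihilated by $I-UU^T$, the terms $Y_UX_Z^T$ and $X_UY_Z^T$ survive it unchanged because $U^TY_U=U^TX_U=0$, and the first term becomes $(\DD_X Y_U+UX_U^TY_U)Z^TZ$ by the identity above; after right multiplication by $Z(Z^TZ)^{-1}$ the factor $Z^TZ(Z^TZ)^{-1}=I$ drops out and one recovers $\DD_X Y_U+UX_U^TY_U+(X_UY_Z^T+Y_UX_Z^T)Z(Z^TZ)^{-1}$. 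In the $Z$-component $(\DD_X Y)^TU$, the contributions of $Y_UX_Z^T$ and $X_UY_Z^T$ vanish (they leave $X_Z(U^TY_U)^T$ and $Y_Z(U^TX_U)^T$), the term $U(\DD_X Y_Z)^T$ yields $\DD_X Y_Z$, and $(\DD_X Y_U)Z^T$ yields $Z(U^T\DD_X Y_U)^T=-ZY_U^TX_U$; altogether $\DD_X Y_Z-ZY_U^TX_U$. This proves \cref{eqn:connection}.

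Finally, the geodesic equations come from specializing $X=Y=\dot R$: in the dynamically orthogonal lift the $U$- and $Z$-components of the velocity are exactly $\dot U$ and $\dot Z$, so $X_U=Y_U=\dot U$, $X_Z=Y_Z=\dot Z$, $\DD_X Y_U=\ddot U$, $\DD_X Y_Z=\ddot Z$, and imposing $\nabla_{\dot R}\dot R=0$ in \cref{eqn:connection} yields at once $\ddot U+U\dot U^T\dot U+2\dot U\dot Z^TZ(Z^TZ)^{-1}=0$ and $\ddot Z-Z\dot U^T\dot U=0$. The one point worth checking — and what I expect to be the only delicate step — is that the dynamically orthogonal lift used above is legitimate along the whole curve: differentiating $U^TU-I$ and $U^T\dot U$ along a solution of \cref{eqn:geodesics} and substituting the first equation shows that this pair obeys a homogeneous linear ODE, hence stays zero if it starts at zero, so the constraints $U^TU=I$ and $U^T\dot U=0$ propagate and the coordinate formula \cref{eqn:connection} applies throughout.
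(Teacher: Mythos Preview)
Your proof is correct and follows essentially the same route as the paper: compute $\DD_X Y$ by the product rule, apply the projection formula \cref{eqn:projectionMap}, and simplify using the differentiated constraint $U^T\DD_X Y_U=-X_U^T Y_U$; the geodesic equations then follow by specializing $X=Y=\dot R$. Your additional check that the constraints $U^TU=I$ and $U^T\dot U=0$ propagate along solutions of \cref{eqn:geodesics} is a nice bonus that the paper does not spell out.
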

	\begin{proof}
		Writing $X=X_UZ^T+UX_Z^{T}$ and $Y=Y_UZ^T+UY_Z^{T}$, one obtains:
		\begin{align*}
			D_XY &=D_XY_UZ^T+Y_UX_Z^{T}+X_UY_Z^{T}+UD_XY_Z^T\\ &=D_XY_UZ^T+UD_XY_Z^T+X_UY_Z^{T}+Y_UX_Z^{T}.
		\end{align*}
		Applying the projection $\Pi_{T}(UZ^T)$ using eqn.\,\cref{eqn:projectionMap}, \emph{i.e.}
		\[ \nabla_X Y=\Pi_{(U,Z)}(D_X Y)=((I-UU^T)D_X YZ(Z^TZ)^{-1},D_XY^T U),\]
		yields in the coordinates of the horizontal space:
		\[\lb{connection1} \nabla_{X}Y   =((I-UU^T)D_X (Y_U)+(X_UY_Z^T+Y_UX_Z^T)Z(Z^TZ)^{-1},D_X (Y_Z)+ZD_X(Y_U^T)U).\]
		\cref{eqn:connection} is obtained by differentiating the constraint $U^TY_U=0$ along the direction $X$, \emph{i.e.}\;$X_U^TY_U+U^TD_XY_U=0$,
		and replacing accordingly $U^TD_XY_U$ into the above expression. Since $\DD_{(\dot{U},\dot{Z})}(\dot{U})=\ddot{U}$ and $\DD_{(\dot{U},\dot{Z})}(\dot{Z})=\ddot{Z}$,  $\nabla_{(\dot{U},\dot{Z})}(\dot{U},\dot{Z})=0$ yields eqs.\;\cref{eqn:geodesics}.
	\end{proof}
	\begin{remark} Physically,  a curve $R(t)=U(t)Z(t)^T$ describes a geodesic on $\M$ if and only if its acceleration lies in the normal space at all instants (eqn.\;\cref{eqn:geodesicDef})  \cite{edelman1998,spivak1973comprehensive}.
	\end{remark}
	Geodesics allow to define the exponential map \cite{spivak1973comprehensive}, which indicates how to walk on the manifold from a point $R\in \M$ along a straight direction $X\in \TT{R}$.
	\begin{definition}
		The exponential map $\exp_{UZ^T}$ at $R=UZ^T\in \M$ is the function
		\begin{equation}\label{eqn:expmap}\begin{array}{cccc}\exp_{UZ^T}:\; & \TT{UZ^T} & \rightarrow &   \M\\ & X & \mapsto & R(1),\end{array}\end{equation}
		where $R(1)=U(1)Z(1)^T$ is the value at time 1 of the solution of the geodesic equation \cref{eqn:geodesics} with initial conditions $U(0)Z(0)^T=R$ and $(\dot{U}(0),\dot{Z}(0))=X$. The value of the velocity of the point $R(1)=\exp_{UZ^T}(X)$,
		\begin{equation}\label{eqn:parallelTransport}\tau_{RR(1)} X=\dot{U}(1)Z(1)^T+U(1)\dot{Z}(1)^T,\end{equation}
		is called the parallel transport of $X$ from $R$ to $R(1)$.
	\end{definition}

	\section{Curvature and differentiability of the orthogonal projection onto smooth
	embedded manifolds}
	\label{sec:curvature}
	Differentiability results for the orthogonal projection onto smooth embedded
	manifolds, as presented with tensor notations in \cite{Ambrosio2000}, are now
	centralized and adapted to the present study. The main motivation is that the SVD
	truncation (\cref{sec:curvatureMatrix}) is an example of such orthogonal projection
	in the particular case of the fixed-rank manifold. Hence, general geometric
	differentiability results for the projections will transpose directly into a
	formula for the differential of the application mapping a matrix to its best low
	rank approximation. The same analysis can be applied to other matrix manifolds to
	obtain the differential of other algebraic operations, and even generalized to
	non-Euclidean ambient spaces, which is the object of \cite{Feppon2016b}.  In this
	section, the space of $l$-by-$m$ matrices $\MlM$ is replaced with a general finite
	dimensional Euclidean space $E$, and the fixed rank manifold with any given smooth
	embedded manifold $\M\subset E$.

	\begin{definition}
		\label{def:projectionMap}
		Let $\M$ be a smooth manifold embedded in an Euclidian space $E$. The orthogonal
		projection of a point $\RR$ onto $\M$ is defined whenever there is a unique point
		$\Pi_\M(\RR)\in\M$ minimizing the Euclidean distance from $\RR$ to $\M$,
		\emph{i.e.}
		\[ ||\RR-\Pi_\M(\RR)||=\inf_{R\in\M} ||\RR-R||.\]
	\end{definition}
	A fundamental property of the orthogonal projection
	is that the vector $\RR-R$ is normal to $\M$
	for the  point $R=\Pi_\M(\RR)$,
	as geometrically illustrated on \cref{fig:tangentProj}:
	\begin{proposition}
		\label{prop:normal}
		Whenever $\Pi_\M(\RR)$ is defined, the residual $\RR-\Pi_\M(\RR)\in\NN{R}$
		must be normal to $\M$ at $R$, namely
		\begin{equation} \label{eqn:SVDorthogonality} \Pi_\TT{\Pi_\M(\RR)}(\RR-\Pi_\M(\RR))=0.\end{equation}
	\end{proposition}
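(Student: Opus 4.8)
The statement is the classical first-order optimality condition for the minimization defining $\Pi_\M(\RR)$: the residual $\RR - \Pi_\M(\RR)$ is orthogonal to the tangent space at the minimizer. I would prove it by a standard variational argument. Write $R = \Pi_\M(\RR)$, so that by \cref{def:projectionMap} the function $S \mapsto \|\RR - S\|^2$ attains its minimum over $\M$ at $S = R$. Pick an arbitrary tangent direction $X \in \TT{R}$ and a smooth curve $R(t)$ drawn on $\M$ with $R(0) = R$ and $\dot R(0) = X$ (such a curve exists by the very definition of the tangent space, cf.\ the discussion before \cref{prop:tangentSpace}). Then the scalar function $\varphi(t) = \|\RR - R(t)\|^2$ is smooth near $t = 0$ and has a local minimum at $t = 0$, hence $\varphi'(0) = 0$.

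Computing the derivative with the Frobenius inner product,
\[
  \varphi'(0) = \frac{\D}{\D t}\Big|_{t=0}\langle \RR - R(t),\, \RR - R(t)\rangle
             = -2\,\langle \RR - R(0),\, \dot R(0)\rangle
             = -2\,\langle \RR - R,\, X\rangle.
\]
Thus $\langle \RR - R,\, X\rangle = 0$ for every $X \in \TT{R}$, which says precisely that $\RR - R$ is orthogonal to $\TT{R}$, i.e.\ $\RR - R \in \NN{R}$ by the definition of the normal space. Equivalently, $\Pi_\TT{R}(\RR - R) = 0$, which is \cref{eqn:SVDorthogonality}.

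There is essentially no obstacle here: the only point requiring a word of care is that $t = 0$ is an interior minimum of $\varphi$ on an open $t$-interval (so that $\varphi'(0) = 0$ rather than merely $\varphi'(0) \ge 0$), which is automatic since the curve $R(t)$ lies in $\M$ for $t$ in a two-sided neighborhood of $0$ and $R$ is a global minimizer of the distance over all of $\M$. One should also note that differentiability of $\varphi$ at $0$ is immediate because $\RR$ is fixed and $R(t)$ is a smooth curve. For the converse direction that is sometimes paired with such a statement — that a normal residual implies $R$ is a \emph{local} minimizer — one would invoke the differentiability of $\Pi_\M$ near $\RR$, but that is not claimed here, so the one-directional argument above suffices.
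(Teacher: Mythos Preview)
Your proof is correct and takes essentially the same approach as the paper: the paper also picks a curve $R(t)$ on $\M$ through $R=\Pi_\M(\RR)$ with $\dot R(0)=X$, differentiates $J(R(t))=\tfrac12\|\RR-R(t)\|^2$ at $t=0$, and reads off $\langle \RR-R,X\rangle=0$ for all tangent $X$. Your write-up just adds a few extra words of justification (interior minimum, smoothness of $\varphi$) that the paper leaves implicit.
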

	\begin{proof}
		For any tangent vector $X\in \TT{R}$, consider a curve $R(t)$ drawn on $\M$ such that $R(0)=R$ and $\dot{R}(0)=X$ where $R$ is minimizing $J(R)=\frac{1}{2}||\RR-R||^2$. Then the stationarity condition
		$\left.\frac{\D}{\D t}\right|_{t=0}J(R(t))=-\langle \RR-R,X \rangle=0$
		states precisely \cref{eqn:SVDorthogonality}.
	\end{proof}
	The following proposition, also used in the proofs of \cite{koch2007dynamical},
	provides an equation for the differential of $\Pi_{\M}$, that will be solved by the
	study of the curvature of $\M$.
	\begin{proposition}
		Suppose the projection $\Pi_\M$ is defined and differentiable at $\RR$. Then the differential $\DD_\XX\Pi_{\M}(\RR)$ of $\Pi_\M$ at the point $\RR$ in the direction $\XX\in E$ satisfies~:
		\begin{equation}\label{eqn:SVDdifferential1} \DD_\XX \Pi_\M(\RR)=\Pi_\TT{\Pi_\M(\RR)}(\XX)+\DD \Pi_\TT{\Pi_\M(\RR)}(\DD_\XX \Pi_\M(\RR))\cdot (\RR-\Pi_\M(\RR)).\end{equation}
	\end{proposition}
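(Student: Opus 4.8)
The plan is to differentiate the defining identity of the projection, namely $\Pi_{\TT{\Pi_\M(\RR)}}\bigl(\RR-\Pi_\M(\RR)\bigr)=0$ from \cref{prop:normal}, with respect to $\RR$ in the direction $\XX$. Write $R=\Pi_\M(\RR)$ for brevity and set $N(\RR)=\RR-\Pi_\M(\RR)$ for the (normal) residual; this is a smooth function of $\RR$ wherever $\Pi_\M$ is differentiable, and $N(\RR)\in\NN{R}$ by \cref{prop:normal}. The identity to differentiate reads $\Pi_{\TT{R}}\bigl(N(\RR)\bigr)=0$, where the dependence on $\RR$ enters both through the base point $R$ (hence through the projector $\Pi_{\TT{R}}$) and through the argument $N(\RR)$.

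First I would apply the product rule for differentiating $\RR\mapsto \Pi_{\TT{R}}(N(\RR))$. Since $R=\Pi_\M(\RR)$ is differentiable with differential $\DD_\XX\Pi_\M(\RR)\in\TT{R}$, and using the notation $\DD\Pi_{\TT{R}}(X)\cdot Y$ established in \cref{eqn:Diff_PiT_XY} for the derivative of the projector map along a tangent direction $X$ applied to a vector $Y$, the chain rule gives
\[
0=\DD_\XX\Bigl(\Pi_{\TT{R}}\bigl(N(\RR)\bigr)\Bigr)=\DD\Pi_{\TT{R}}\bigl(\DD_\XX\Pi_\M(\RR)\bigr)\cdot N(\RR)+\Pi_{\TT{R}}\bigl(\DD_\XX N(\RR)\bigr).
\]
Now $\DD_\XX N(\RR)=\XX-\DD_\XX\Pi_\M(\RR)$, so the second term is $\Pi_{\TT{R}}(\XX)-\Pi_{\TT{R}}\bigl(\DD_\XX\Pi_\M(\RR)\bigr)$; and since $\DD_\XX\Pi_\M(\RR)$ is itself a tangent vector at $R$, we have $\Pi_{\TT{R}}\bigl(\DD_\XX\Pi_\M(\RR)\bigr)=\DD_\XX\Pi_\M(\RR)$. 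Substituting $N(\RR)=\RR-\Pi_\M(\RR)$ and rearranging yields exactly \cref{eqn:SVDdifferential1}.

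The one point that needs care — and the main (mild) obstacle — is justifying that $\DD_\XX\Pi_\M(\RR)\in\TT{R}$, so that it may legitimately be used as the tangent direction $X$ in the expression $\DD\Pi_{\TT{R}}(X)\cdot Y$ and so that $\Pi_{\TT{R}}$ fixes it. This follows because $\Pi_\M(\RR(t))$ is a curve lying on $\M$ for any smooth curve $\RR(t)$ with $\RR(0)=\RR$, $\dot\RR(0)=\XX$; its velocity at $t=0$ is by definition a tangent vector to $\M$ at $R=\Pi_\M(\RR)$. One should also remark that the projector-derivative notation $\DD\Pi_{\TT{R}}(X)\cdot Y$ as defined in \cref{eqn:Diff_PiT_XY} is exactly what the chain rule produces when the base point moves along a curve on $\M$ with velocity $X$, which is precisely the situation here since $t\mapsto\Pi_\M(\RR(t))$ is such a curve. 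With these observations the computation above is complete, and no further input (in particular, no curvature information) is needed at this stage — that enters only when one subsequently solves \cref{eqn:SVDdifferential1} for $\DD_\XX\Pi_\M(\RR)$ via the Weingarten map.
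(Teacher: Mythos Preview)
Your proof is correct and follows essentially the same approach as the paper: differentiate the orthogonality relation $\Pi_{\TT{\Pi_\M(\RR)}}(\RR-\Pi_\M(\RR))=0$ from \cref{prop:normal} along $\XX$, then use that $\DD_\XX\Pi_\M(\RR)\in\TT{\Pi_\M(\RR)}$ (since $\Pi_\M$ maps into $\M$) to simplify $\Pi_{\TT{R}}(\DD_\XX\Pi_\M(\RR))=\DD_\XX\Pi_\M(\RR)$ and rearrange. Your write-up is in fact slightly more detailed than the paper's in justifying the tangency of the differential and the applicability of the chain rule to the projector.
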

	\begin{proof}
		Differentiating equation \cref{eqn:SVDorthogonality} along the direction $\XX$ yields
		\[ \DD \Pi_\TT{\Pi_\M(\RR)}(\DD\Pi_\M(\RR)(\XX))\cdot (\RR-\Pi_\M(\RR))+\Pi_\TT{\Pi_\M(\RR)}(\XX-\DD_\XX\Pi_\M(\RR))=0.\]
		Since $\Pi_\M(\RR)\in\M$ for any $\RR$, the differential $\DD_\XX\Pi_\M(\RR)$ is a  tangent vector, and the results follows from the relation $\Pi_\TT{\Pi_\M(\RR)}(\DD_\XX\Pi_\M(\RR))=\DD_\XX\Pi_\M(\RR)$.
	\end{proof}
	Let $R=\Pi_\M(\RR)$ be the projection of the point $\RR$ on $\M$ and $N=\RR-R$ the
	corresponding normal residual vector. Solving \cref{eqn:SVDdifferential1} for the
	differential $X=\DD_\XX \Pi_\M(\RR)$ requires to invert the linear operator
	$I-L_R(N)$ where $L_R(N)$ is the map $X\mapsto\DD \Pi_\TT{R}(X)\cdot N$.  $L_R(N)$
	would be zero if $\M$ were to be a ``flat'' vector subspace and can be interpreted
	as a curvature correction. In fact, $L_R(N)$ is nothing else than the Weingarten
	map, at the origin of the definition of 	principal curvatures. For embedded
	hypersurface, this application maps tangent vectors $X$ to the  tangent variations
	$-\DD_X N$ of the unit normal vector field $N$, and the eigenvalues and
	eigenvectors of this symmetric endomorphism define the principal curvatures and
	directions of the hypersurface (\cite{spivak1973comprehensive}, Vol. 2).  For
	general smooth embedded sub-manifolds, a Weingarten map is defined for every
	possible normal direction
	\cite{Simon1983,Ambrosio2000,absil2013extrinsic,absil2009all}.

	\begin{definition}[Weingarten map] \label{def:weingartenMap}
		For any point $R\in\M$, tangent and normal vector fields $X,Y\in \TT{R}$ and
		$N\in\NN{R}$ defined on a neighborhood of $R$, the following relation, called
		\emph{Weingarten identity} holds:
		\begin{equation} \label{eqn:weingartenIdentity}\langle \Pi_\TT{R}(\DD_X N),Y \rangle=\langle N,\Gamma(X,Y) \rangle.\end{equation}
		Also, the tangent variations $\Pi_\TT{R}(\DD_X N)$ depend only on the value of the normal vector field $N$ at $R$ as it can be seen from the identity
		\begin{equation} \label{eqn:weingartenApplication}\DD\Pi_\TT{R}(X)\cdot N=-\Pi_\TT{R}(\DD_X N).\end{equation}
		The application \[\begin{array}{ccccc}L_R(N) &: & \TT{R} & \rightarrow & \TT{R}
		\\ & & X & \mapsto &  \DD\Pi_\TT{R}(X)\cdot N,\end{array}\] is therefore a
		symmetric map of the tangent space into itself and is called the Weingarten map
		in the normal direction $N$. The corresponding eigenvectors and eigenvalues are
		respectively called the \emph{principal directions} and \emph{principal
		curvatures} of $\M$ in the normal direction~$N$. The induced symmetric bilinear
		form on the tangent space,
		\begin{equation}\label{eqn:secondFundamentalFormAbstract}\RN{2}(N):\;(X,Y)\mapsto -\langle N,\Gamma(X,Y) \rangle,\end{equation} is called the second fundamental form in the direction $N$.
	\end{definition}
	\begin{proof} See \cite{Simon1983} or the proof Theorem 5 of \cite{spivak1973comprehensive}, vol.3, ch.1.
	\end{proof}

	The differentiability of the projection map for arbitrary sets  has been studied in
	\cite{wulbert1968continuity,abatzoglou1979metric} and more recently in the context
	of smooth manifolds in
	\cite{Ambrosio2000,gilbarg2015elliptic,cannarsa2004representation} with recent
	applications in shape optimization \cite{allaire2014multi}.  The following theorem
	reformulates these results in the framework of this article.
	The proof given in \cref{app:proofDiff} is essentially a justification that one can
	indeed invert the operator $I-L_R(N)$ by using its eigendecomposition. Recall that
	the adherence $\overline{\M}$ is the set of limit points of $\M$. In this paper,
	the boundary of a manifold is defined as the set   $\partial
	\M=\overline{\M}\backslash\M$ .

	\begin{theorem} \label{thm:distanceDiff}
		Let $\Omega\subset E$ be an open set of $E$ and assume that  for any
	$\RR\in\Omega$, there exists a unique projection $\Pi_\M(\RR)\in \M$ such that
\begin{equation}\label{eqn:CondunicityProj}||\RR-\Pi_\M(\RR)||=\inf_{R\in\M}||\RR-R||,\end{equation}
	and that in addition, there is no other projection on the boundary $\partial \M$ of
	$\M$:
		\begin{equation}\label{eqn:Condfrontier} \forall R\in \overline{\M}\backslash \M,
		||\RR-R||>||\RR-\Pi_\M(\RR)||.\end{equation} For  $\RR\in\Omega$, denote
		$\kappa_i(N)$ and $\Phi_i$ the respective eigenvalues and eigenvectors of the
		Weingarten map $L_R(N)$ at $R=\Pi_\M(\RR)$ with the normal direction
		$N=\RR-\Pi_\M(\RR)$. Then all the principal curvatures satisfy $\kappa_i(N)<1$
		and the projection $\Pi_\M$ is differentiable at $\RR$. The differential
		$\DD_\XX\Pi_\M(\RR)$ at $\RR$ in the direction $\XX$ satisfies
		\begin{equation}\label{eqn:diffProjection} \begin{aligned}
				\DD_\XX \Pi_\M(\RR) &  =\sum\limits_{\kappa_i(N)}
			\frac{1}{1-\kappa_i(N)}\langle \Phi_i,\XX \rangle\Phi_i \\
		&=\Pi_{\TT{\Pi_\M(\RR)}}(\XX)+\sum\limits_{\kappa_i(N)\neq 0}
		\frac{\kappa_i(N)}{1-\kappa_i(N)}\langle \Phi_i,\XX \rangle\Phi_i .\end{aligned}
	\end{equation} \end{theorem}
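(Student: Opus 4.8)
The plan is to establish the two conclusions in order: first that all principal curvatures $\kappa_i(N)$ at $R=\Pi_\M(\RR)$ in the direction $N=\RR-\Pi_\M(\RR)$ are strictly less than $1$, and second that $\Pi_\M$ is differentiable at $\RR$ with the stated formula, which then follows essentially by algebraically inverting the operator $I-L_R(N)$ using its spectral decomposition. For the curvature bound, I would argue by contradiction along a geodesic. Fix a principal direction $\Phi_i$ with $\kappa_i(N)\geq 1$ and consider the geodesic $R(t)=\exp_R(t\Phi_i)$ emanating from $R$. Writing $f(t)=\tfrac12\|\RR-R(t)\|^2$, we have $f(0)=\tfrac12\|N\|^2$, $f'(0)=-\langle N,\dot R(0)\rangle = -\langle N,\Phi_i\rangle = 0$ since $N$ is normal and $\Phi_i$ tangent, and $f''(0)=\|\dot R(0)\|^2-\langle N,\ddot R(0)\rangle$. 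Because $R(t)$ is a geodesic, $\ddot R(0)=\DD\Pi_{\TT{R}}(\Phi_i)\cdot\dot R(0)$ lies in the normal space (eqn.~\cref{eqn:geodesicDef}), and by the Weingarten identity \cref{eqn:weingartenIdentity} together with the definition of $L_R(N)$ one gets $\langle N,\ddot R(0)\rangle = \langle N,\Gamma(\Phi_i,\Phi_i)\rangle = \langle L_R(N)\Phi_i,\Phi_i\rangle = \kappa_i(N)\|\Phi_i\|^2$. Hence $f''(0) = (1-\kappa_i(N))\|\Phi_i\|^2 \leq 0$. This makes $R$ a non-strict local maximum or saddle of the distance functional along that geodesic; pushing the argument one must produce, for small $t$, a point $R(t)\in\overline{\M}$ (using completeness of geodesics or a limiting argument toward $\partial\M$) with $\|\RR-R(t)\|\leq\|\RR-\Pi_\M(\RR)\|$, contradicting either the uniqueness hypothesis \cref{eqn:CondunicityProj} or the frontier condition \cref{eqn:Condfrontier}. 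The case $\kappa_i(N)=1$ needs the third-order term or a perturbation of the direction to force a strict decrease; handling that borderline case cleanly is where I expect the technical delicacy to concentrate.

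With $\kappa_i(N)<1$ for all $i$ in hand, the operator $I-L_R(N)$ on $\TT{R}$ is symmetric with all eigenvalues $1-\kappa_i(N)>0$, hence invertible and positive definite. To get differentiability of $\Pi_\M$ itself (not merely the formula assuming it), I would invoke the implicit function theorem applied to the map $F(\RR,R) = \Pi_{\TT{R}}(\RR-R)$, which vanishes exactly on the graph of $\Pi_\M$ by Proposition~\ref{prop:normal}; its partial derivative in $R$ at $(\RR,\Pi_\M(\RR))$ is, up to sign, precisely $I-L_R(N)$ restricted appropriately (this is the content of Proposition with eqn.~\cref{eqn:SVDdifferential1}), so invertibility gives a local $C^1$ solution, which by uniqueness must coincide with $\Pi_\M$. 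Then differentiating the orthogonality relation \cref{eqn:SVDorthogonality} as in the proof of \cref{eqn:SVDdifferential1} yields $(I-L_R(N))\,\DD_\XX\Pi_\M(\RR) = \Pi_{\TT{R}}(\XX)$, and since $\Pi_{\TT{R}}(\XX) = \sum_i \langle\Phi_i,\XX\rangle\Phi_i$ in the orthonormal eigenbasis $(\Phi_i)$, applying $(I-L_R(N))^{-1} = \sum_i \tfrac{1}{1-\kappa_i(N)}\langle\Phi_i,\cdot\rangle\Phi_i$ gives the first line of \cref{eqn:diffProjection}. The second line is the trivial rewriting $\tfrac{1}{1-\kappa} = 1 + \tfrac{\kappa}{1-\kappa}$, splitting off the $\kappa_i=0$ eigenspace which reassembles to $\Pi_{\TT{R}}(\XX)$.

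The main obstacle, as noted, is the curvature bound rather than the inversion: the algebra of \cref{eqn:diffProjection} is routine once $I-L_R(N)$ is known invertible, but showing $\kappa_i(N)<1$ genuinely uses the global hypotheses \cref{eqn:CondunicityProj}--\cref{eqn:Condfrontier}. The second-order computation along a geodesic shows $\kappa_i(N)\leq 1$ directly from uniqueness of the minimizer (a true minimizer has $f''(0)\geq 0$), but upgrading $\leq$ to $<$ requires ruling out the degenerate case where the distance is locally constant to second order, and this is exactly where condition \cref{eqn:Condfrontier}, excluding competing projections on $\partial\M$, does the work — one tracks the geodesic until it either returns below the minimal distance or limits to a boundary point at the same distance, both forbidden. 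I would consult \cite{Ambrosio2000,gilbarg2015elliptic} for the precise handling of this limiting step and defer the full argument to \cref{app:proofDiff} as the excerpt indicates.
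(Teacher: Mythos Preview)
Your implicit function theorem argument and the spectral inversion of $I-L_R(N)$ match the paper's proof essentially verbatim, and your geodesic second-order computation $f''(0)=(1-\kappa_i(N))\|\Phi_i\|^2$ is just the covariant Hessian of $J$ evaluated along a geodesic, which is precisely how the paper obtains $\kappa_i(N)\leq 1$ (via eqn.~\cref{eqn:covhessian} at a minimizer). So the skeleton of your argument is correct.

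The gap is in the upgrade from $\kappa_i(N)\leq 1$ to $\kappa_i(N)<1$. Your proposed route---third-order expansion along the geodesic or a perturbation of the direction---is not worked out, and there is no reason the third derivative should be nonzero; you yourself flag this as the delicate point and defer to references. The paper avoids this entirely with a one-line scaling trick you are missing: since $\Omega$ is \emph{open}, there is $s>1$ with $R+sN\in\Omega$; the projection is invariant along the normal ray, so $\Pi_\M(R+sN)=R$, and the Hessian argument applied at $R+sN$ gives $\kappa_i(sN)=s\,\kappa_i(N)\leq 1$, whence $\kappa_i(N)\leq 1/s<1$. No higher-order analysis is needed, and condition~\cref{eqn:Condfrontier} plays no direct role here---you misattribute it. That condition is instead used in a separate continuity lemma (\cref{lem:continuityProj} in the appendix) showing $\Pi_\M$ is continuous on $\Omega$, which you omit but which the paper needs: the implicit function theorem produces a smooth map $\phi$ on a neighborhood $\Omega_E$ with $\phi(\RR')\in\Omega_\M$ solving the normality equation, and one must know $\Pi_\M(\RR')\in\Omega_\M$ for nearby $\RR'$ to conclude $\phi=\Pi_\M$. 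Your phrase ``by uniqueness must coincide with $\Pi_\M$'' hides exactly this step.
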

	\begin{proof} See \cref{app:proofDiff} or \cite{Ambrosio2000}.
	\end{proof}

	The set $\Sk(\M)\subset E$ of points that admit more than one possible projection
	is called the \emph{skeleton} of $\M$ (see \cite{delfour2011shapes}). One cannot
	expect the projection map to be differentiable at points that are in the adherence
	$\overline{\Sk(\M)}$, as there is a ``jump'' of the projected values across
	$\Sk(\M)$ (\cref{fig:parabola}).

	Equation \cref{eqn:diffProjection} is analogous to the formula presented in
	\cite{gilbarg2015elliptic} for hyper-surfaces (Lemma 14.17).  In this framework,
	one retrieves the usual  notion of principal curvature  by considering the
	eigenvalues $\kappa_i(N)$ for a normalized normal vector $N$. Curvature radius
	being defined as  inverse of curvatures:
	$\rho_i=\kappa_i\left(\frac{N}{||N||}\right)^{-1}$, the condition
	$\kappa_i(N)=||\RR-\Pi_\M(\RR)||/\rho_i\neq 1$ states that the projection $\Pi_\M$
	is differentiable at points $\RR$ that are not center of curvature. Note that
	assumption (\ref{eqn:Condfrontier}) is required to deal with non closed manifolds
	(boundary points being  not considered as part of the manifold), which is the case
	for the fixed rank matrix manifold.
	\begin{figure}
		\centering
		\includegraphics[width=0.5\linewidth]{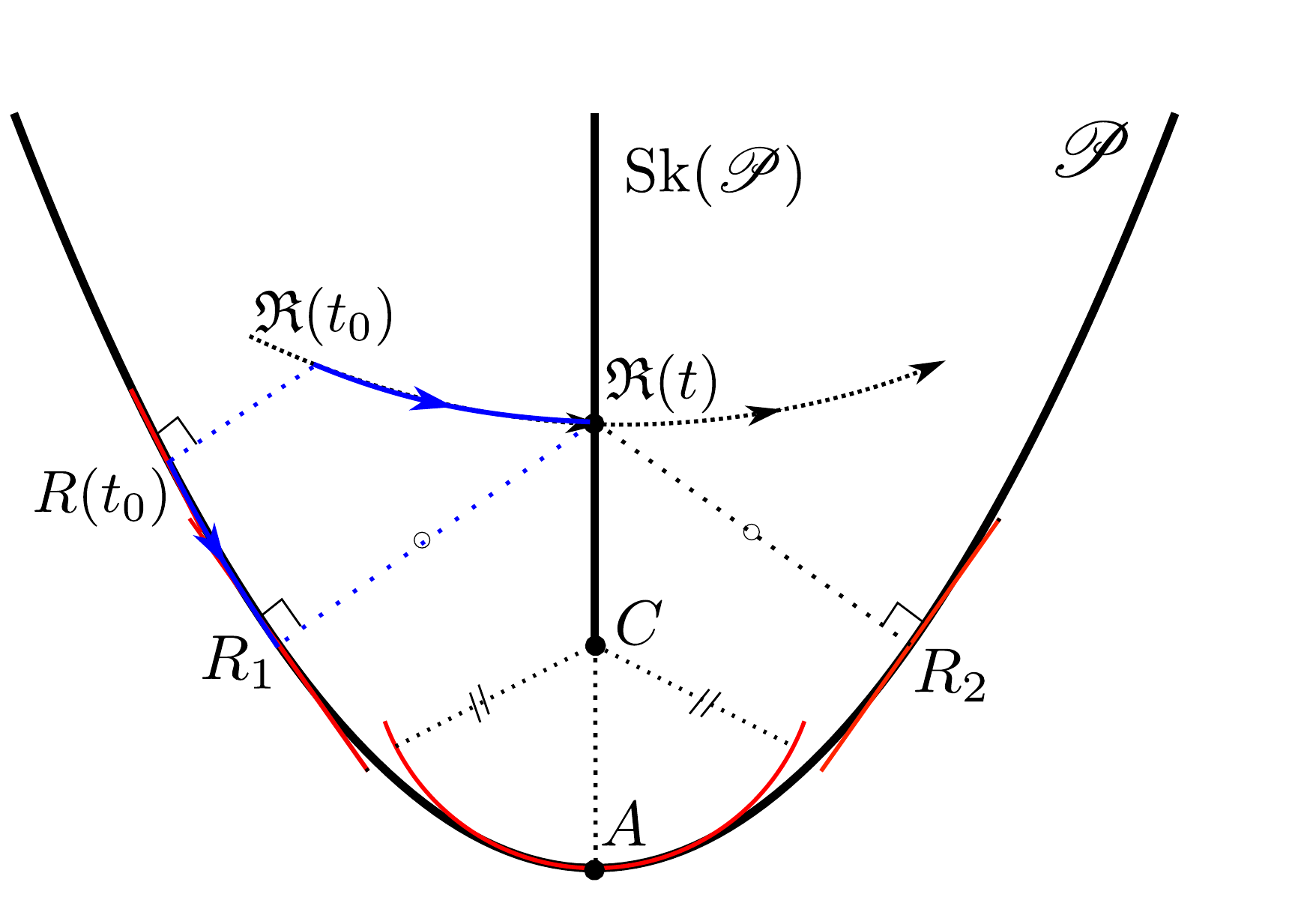}
		\caption{\small A parabola $\M=\mathscr{P}$ and its skeleton set $\Sk(\mathscr{P})$. The orthogonal projection $\Pi_\M$ is not differentiable on the adherence $\overline{\Sk(\mathscr{P})}$. Projected values $R(t)=\Pi_\M(\RR(t))$ jump from $R_1$ to $R_2$ when $\RR(t)$ crosses the skeleton. A center of curvature $C$, for which $\kappa_i(C-\Pi_\M(C))=1$,  may admit a unique projection, $A$, but is a limit point of the skeleton ${\Sk(\mathscr{P})}$.}
		\label{fig:parabola}
	\end{figure}

	\section{Curvature of the fixed rank matrix manifold and the differentiability of the SVD truncation}
	\label{sec:curvatureMatrix}
	In the following, $\M\subset \MlM$ denotes again the fixed rank matrix manifold of
	\cref{def:manifold} and $E=\MlM$ is the space of $l$-by-$m$ matrices. It is well
	known \cite{Golub2012,horn_johnson_1985} that the truncated SVD, \emph{i.e.}\;the
	map that set all singular values of a matrix $\RR$ to zero except the $r$ highest,
	yields the best rank $r$ approximation.
	\begin{definition}
		\label{def:projectionM}
		Let $\RR\in\MlM$ a matrix of rank at least $r$, i.e.\;$r+k, k \ge 0$, and denote
		$\RR=\sum_{i=1}^{r+k}\sigma_i(\RR)u_iv_i^T$ its singular value decomposition. If
		$\sigma_r(\RR)>\sigma_{r+1}(\RR)$, then the rank $r$ truncated SVD \[
		\Pi_{\M}(\RR)=\sum_{i=1}^r \sigma_i(\RR) u_iv_i^T\in\M,\] is the unique matrix
		$R\in\M$ minimizing the Euclidian distance $R\mapsto ||\RR-R||$.
	\end{definition}

	\begin{remark}
		The skeleton of $\M$ (Fig.\;\ref{fig:parabola}) is therefore the set \[\Sk(\M)=\{
		\sigma_r(\RR)=\sigma_{r+1}(\RR)\}\] characterized by the crossing of the singular
		values of order $r$ and $r+1$.
	\end{remark}
	In the following, the Weingarten map for the fixed rank manifold is derived. Note
	that its expression has been previously found by \cite{absil2013extrinsic} under
	the form of equation \cref{eqn:weingartenAbsil} below.
	\begin{proposition}
		\label{prop:matrixWeingarten}
		The Weingarten map $L_R(N)$ of the fixed rank manifold $\M$ in the normal
		direction $N\in \NN{R}$ is the application:
		\begin{equation}\label{eqn:matrixweingartenMap}\begin{array}{crcc}L_R(N)\;:\;&
		\mathcal{H}_{(U,Z)} & \longrightarrow & \mathcal{H}_{(U,Z)} \\ & (X_U,X_Z) &
	\longmapsto & (NX_Z(Z^TZ)^{-1},N^TX_U). \end{array}\end{equation} Or, denoting
	$R=\sum_{i=1}^r \sigma_i u_i v_i^T$ and $N=\sum_{j=1}^k
	\sigma_{r+j}u_{r+j}v_{r+j}^T$ as in \cref{prop:normalSpace}, this can be rewritten
	more explicitly as 
			\begin{equation}
			\label{eqn:weingartenClear}
			\forall X\in\TT{R},\, L_R(N)X=\sum_{\substack{1\<
				i\<r\\1\<j\<k}}
			\frac{\sigma_{r+j}}{\sigma_i}\left[u_iv_i^TX^Tu_{r+j}v_{r+j}^T+u_{r+j}v_{r+j}^TX^Tu_iv_i^T\right].\end{equation}

		The second fundamental form is given by:
		\begin{equation}  \label{eqn:secondFundamentalForm} \RN{2}\;:\;(X,Y)\mapsto
		\langle X,L_R(N)(Y) \rangle=\Tr((X_UY_Z^T+Y_UX_Z^T)^TN).\end{equation}
	\end{proposition}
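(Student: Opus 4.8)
The plan is to compute the Weingarten map directly from its characterization via the Christoffel symbol, namely from the Weingarten identity \cref{eqn:weingartenIdentity}, $\langle \Pi_\TT{R}(\DD_X N),Y \rangle=\langle N,\Gamma(X,Y) \rangle$, together with $L_R(N)X = \DD\Pi_\TT{R}(X)\cdot N = -\Pi_\TT{R}(\DD_X N)$. Rather than differentiating the normal field $N$ extrinsically, I would start from the formula for the Christoffel symbol $\Gamma(X,Y)=-(I-\Pi_\TT{R})\DD_X Y$ of \cref{prop:connection}, evaluated using the covariant derivative expression \cref{eqn:connection}. Concretely, for $X=(X_U,X_Z)$ and $Y=(Y_U,Y_Z)$ in $\HH$, one reads off from \cref{eqn:connection} that $\Gamma(X,Y)=\DD_X Y-\nabla_X Y$, and the normal part is $\Gamma(X,Y)=-\big(UX_U^TY_U+(X_UY_Z^T+Y_UX_Z^T)Z(Z^TZ)^{-1}\big)Z^T+U(ZY_U^TX_U)^T$ — up to rearrangement, its component as an ambient matrix. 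Pairing this against a normal vector $N\in\NN{R}$ (so $U^TN=0$ and $NZ=0$), all the terms containing a factor $Z^T$ on the right or $U$ on the left that meet $N$ survive selectively: $\langle N,\Gamma(X,Y)\rangle$ collapses to $-\Tr\big((X_UY_Z^T+Y_UX_Z^T)^TN\big)$, since $\langle N, UX_U^TY_UZ^T\rangle = \Tr(Z X_U^T U^T N)=0$ by $U^TN=0$ and similarly the $U(\cdots)^T$ term vanishes against $N$ by $NZ=0$. This already gives the second fundamental form \cref{eqn:secondFundamentalForm}, since $\RN{2}(N)(X,Y)=-\langle N,\Gamma(X,Y)\rangle=\Tr((X_UY_Z^T+Y_UX_Z^T)^TN)$.

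From the second fundamental form I would recover $L_R(N)$ by the defining relation $\RN{2}(N)(X,Y)=\langle X,L_R(N)Y\rangle=g(X,L_R(N)Y)$, where $g$ is the metric \cref{eqn:metric}. So I must find $(\tilde X_U,\tilde X_Z)=L_R(N)(Y_U,Y_Z)\in\HH$ such that, for every $(X_U,X_Z)$ with $U^TX_U=0$,
\[
\Tr(Z^TZ\,X_U^T\tilde X_U)+\Tr(X_Z^T\tilde X_Z)=\Tr(X_U Y_Z^T N^T)+\Tr(Y_U X_Z^T N^T).
\]
Matching the $X_Z$-terms forces $\tilde X_Z=N^TY_U$ (note $U^T(N^TY_U)$ is not what is constrained — the horizontal constraint is only on the first slot). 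Matching the $X_U$-terms: $\Tr(Z^TZ X_U^T\tilde X_U)=\Tr(X_U Y_Z^T N^T)=\Tr\big(X_U^T (N Y_Z)\big)$ for all $X_U$ with $U^TX_U=0$; since $N Y_Z$ already satisfies $U^T(NY_Z)=0$, this gives $Z^TZ\,\tilde X_U$-style bookkeeping leading to $\tilde X_U=N Y_Z(Z^TZ)^{-1}$. This produces exactly \cref{eqn:matrixweingartenMap}. I would then check symmetry of $L_R(N)$ as a sanity step — it follows automatically from symmetry of $\Gamma$ proved in \cref{prop:connection}, or can be verified directly from the formula.

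Finally, to obtain the explicit spectral form \cref{eqn:weingartenClear}, I would substitute the SVD data of \cref{prop:normalSpace}: write $R=\sum_{i=1}^r\sigma_i u_iv_i^T$ with $U=[u_1,\dots,u_r]$, $Z=\sum_i\sigma_i v_i e_i^T$ so that $Z^TZ=\diag(\sigma_i^2)$, and $N=\sum_{j=1}^k\sigma_{r+j}u_{r+j}v_{r+j}^T$. For a general $X\in\TT{R}$ one decomposes $X=X_UZ^T+UX_Z^T$ and expresses $(X_U,X_Z)$ through the projection formulas $X_Z=X^TU$, $X_U=(I-UU^T)XZ(Z^TZ)^{-1}$. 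Plugging into $L_R(N)X=(NX_Z(Z^TZ)^{-1})Z^T+U(N^TX_U)^T$ and simplifying using orthonormality of the $u$'s and $v$'s — and crucially $N^TX_U=N^T(I-UU^T)XZ(Z^TZ)^{-1}=N^TXZ(Z^TZ)^{-1}$ since $N^TU=0$ — each block contributes a term of the form $\frac{\sigma_{r+j}}{\sigma_i}u_iv_i^TX^Tu_{r+j}v_{r+j}^T$ or its transpose-pair partner. Collecting the two symmetric contributions yields \cref{eqn:weingartenClear}. The main obstacle I anticipate is purely notational bookkeeping: keeping straight the four scalar-product computations in the metric $g$ (with its $Z^TZ$ weight on the $U$-slot) and tracking how the horizontal constraint $U^TX_U=0$ interacts with the normal conditions $U^TN=0$, $NZ=0$ so that the cross terms vanish exactly as needed. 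Once that algebra is organized, each displayed formula follows by inspection.
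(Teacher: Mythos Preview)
Your approach is correct but follows a different route from the paper. The paper computes the Weingarten map \emph{first}, by directly differentiating the tangent projection operator \cref{eqn:projectionMap} in the direction $X=(X_U,X_Z)$ and applying it to the fixed normal vector $N$: using $U^TN=0$ and $NZ=0$, one obtains $L_R(N)X=UX_U^TN+NX_Z(Z^TZ)^{-1}Z^T$ in ambient form, which in horizontal coordinates is $(NX_Z(Z^TZ)^{-1},\,N^TX_U)$. The second fundamental form \cref{eqn:secondFundamentalForm} is then obtained \emph{afterwards} by pairing $L_R(N)Y$ against $X$ in the metric $g$. You go in the reverse order: you compute the Christoffel symbol from the covariant derivative formula \cref{eqn:connection}, pair it against $N$ to extract the second fundamental form, and then recover $L_R(N)$ by solving $g(X,L_R(N)Y)=\RN{2}(N)(X,Y)$ for the unknown $(\tilde X_U,\tilde X_Z)$.

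Both routes are valid. The paper's direct differentiation is shorter and avoids the metric inversion step (no need to disentangle the $Z^TZ$ weight in the $U$-slot of $g$). Your route has the advantage of making the relation $\RN{2}(N)(X,Y)=-\langle N,\Gamma(X,Y)\rangle$ explicit and yields the Christoffel formula \cref{eqn:christoffel} along the way; it also foregrounds why $L_R(N)$ is automatically symmetric. A minor presentational point: the intermediate ambient expression you write for $\Gamma(X,Y)$ is not quite well-formed (the terms $UX_U^TY_UZ^T$ and $U(ZY_U^TX_U)^T$ you list are \emph{corrections} appearing in $\nabla_X Y$, not in $\DD_X Y$, so the subtraction needs to be stated more carefully), but your conclusion that only the cross term $-(I-\Pi_\TT{R})(X_UY_Z^T+Y_UX_Z^T)$ survives after pairing with $N$ is exactly right and matches \cref{eqn:christoffel}.
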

	\begin{proof}
		Differentiating \cref{eqn:projectionMap} along the tangent direction
		$X=(X_U,X_Z)\in\mathcal{H}_{(U,Z)}$, and using the relations $U^TN=0$ and $NZ=0$,
		yields
		\begin{equation}\label{eqn:weingartenMatrix}L_R(N)X=
		UX_U^TN+NX_Z(Z^TZ)^{-1}Z^T.\end{equation} The normality of $N$ implies that
		$(NX_Z(Z^TZ)^{-1},N^TX_U)$ is a vector of the horizontal space and therefore
		equation \cref{eqn:matrixweingartenMap} follows.  Eqn.
		\cref{eqn:weingartenMatrix} can be rewritten as 
		\begin{equation}
			\label{eqn:weingartenAbsil}
			L_R(N)X=
		U(Z^TZ)^{-1}Z^TX^TN+NX^TU(Z^TZ)^{-1}Z^T,
	\end{equation} by expressing
		$X_U=(I-UU^T)XZ(Z^TZ)^{-1}$ and $X_Z=X^TU$ in terms of $X$ (eqn.
		\cref{eqn:projectionMap}), from which is derived eqn. \cref{eqn:weingartenClear}
		by introducing singular vectors $(u_i)$, $(v_i)$ and singular values
		$(\sigma_i)$.  One obtains\cref{eqn:secondFundamentalForm}   by evaluating the
		scalar product $\langle X,L_R(N)(Y) \rangle$ with the metric $g$ (equation
		\cref{eqn:metric}).
	\end{proof}
	\begin{remark}
		\label{rmk:christoffel}
		The Christoffel symbol is deduced from equations \cref{eqn:secondFundamentalForm}
		and \cref{eqn:secondFundamentalFormAbstract}:
		\begin{equation}
			\label{eqn:christoffel}
			\Gamma(X,Y)=-(I-\Pi_\TT{R})(X_UY_Z^T+Y_UX_Z^T).
		\end{equation}
	\end{remark}

	\begin{theorem}
		\label{thm:curvature}
		Consider a point $R=UZ^T=\sum_{i=1}^r \sigma_i u_iv_i^T\in\M$ and a normal vector
		$N=\sum_{j=1}^k \sigma_{r+j}u_{r+j}v_{r+j}^T\in \NN{R}$ (no ordering of the
		singular values is assumed). At $R$ and in the direction $N$, there are $2kr$
		non-zero principal curvatures
		\[\kappa_{i,j}^\pm(N)=\pm\frac{\sigma_{r+j}}{\sigma_i},\] for all possible
		combinations of non-zero singular values $\sigma_{r+j}, \sigma_i$ for $1\<i\<r$
		and $1\<j\<k$. The normalized corresponding principal directions are the tangent
		vectors  \begin{equation}
			\label{eqn:principalDirectionMatrix}
			\Phi_{i,r+j}^\pm=\frac{1}{\sqrt{2}}(u_{r+j}v_i^T\pm u_iv_{r+j}^T).\end{equation}
		The other principal curvatures are null and associated with  the principal subspace
		\[\mathrm{Ker}(L_R(N))=\mathrm{span}\{(u_iv^T)_{1\<i\<r} | Nv=0\}\oplus\mathrm{span}\{(uv_i^T)_{1\<i\<r} | u^TN=u^TU=0\}.\]
	\end{theorem}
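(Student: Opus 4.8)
The plan is to pick an orthonormal basis of $\MlM$ adapted to $R$ and $N$ in which the Weingarten map $L_R(N)$ of \cref{prop:matrixWeingarten} becomes block diagonal with zero blocks and $2\times 2$ blocks, and then diagonalise the $2\times 2$ blocks by hand. By \cref{lemma:rank} I may assume $U=[u_1,\dots,u_r]$ and $Z=[\sigma_1 v_1,\dots,\sigma_r v_r]$, so that $R=\sum_{i=1}^r\sigma_i u_i v_i^T$, and by \cref{prop:normalSpace} I may complete $(u_i)_{1\le i\le r+k}$ and $(v_i)_{1\le i\le r+k}$ into orthonormal bases $(u_a)_{1\le a\le l}$ of $\R^l$ and $(v_b)_{1\le b\le m}$ of $\R^m$; then $\{u_a v_b^T\}_{a,b}$ is an orthonormal basis of $\MlM$ for the Frobenius product. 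Next I would single out which of these vectors lie in $\TT{R}$: by \cref{prop:tangentSpace} the piece $UX_Z^T$ of a tangent vector sweeps out $\mathrm{span}\{u_i v_b^T : 1\le i\le r,\ 1\le b\le m\}$, while the piece $X_UZ^T$, since $\mathrm{col}(Z)=\mathrm{span}(v_1,\dots,v_r)$ and $U^TX_U=0$, sweeps out $\mathrm{span}\{u_a v_j^T : r<a\le l,\ 1\le j\le r\}$; these two subspaces are orthogonal and their direct sum has the dimension $(l+m)r-r^2$ of $\TT{R}$, hence equals it.

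Then I would evaluate $L_R(N)$ on this basis via \cref{eqn:weingartenClear}. Because the indices appearing there satisfy $i\le r<r+j$, the orthogonality relations among the $u_a$ and the $v_b$ collapse the sum and give $L_R(N)(u_i v_{r+j}^T)=\frac{\sigma_{r+j}}{\sigma_i}u_{r+j}v_i^T$ and $L_R(N)(u_{r+j}v_i^T)=\frac{\sigma_{r+j}}{\sigma_i}u_i v_{r+j}^T$ for $1\le i\le r$, $1\le j\le k$, while every other tangent basis vector --- the $u_i v^T$ with $i\le r$ and $Nv=0$, and the $u v_j^T$ with $j\le r$, $u^TU=0$ and $u^TN=0$ --- is annihilated. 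Hence $L_R(N)$ splits as the zero map on $\mathrm{Ker}(L_R(N))$ plus, for each pair $(i,j)$, the endomorphism of the plane $\mathrm{span}\{u_i v_{r+j}^T,\,u_{r+j}v_i^T\}$ that interchanges the two orthonormal generators and scales by $\sigma_{r+j}/\sigma_i$. Such a map has eigenvalues $\pm\sigma_{r+j}/\sigma_i$ and unit eigenvectors $\frac{1}{\sqrt2}(u_{r+j}v_i^T\pm u_i v_{r+j}^T)$, and these are mutually orthonormal as distinct combinations of the orthonormal family $\{u_a v_b^T\}$; this yields $2kr$ non-zero principal curvatures with the stated principal directions, and the kernel is the claimed orthogonal direct sum.

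Everything above is elementary once the adapted orthonormal basis is in place; the only delicate point is the bookkeeping in the two middle steps, namely keeping straight which index ranges over $\{1,\dots,r\}$ and which ranges beyond $r$, so that $\TT{R}$ is split correctly and the kernel identified correctly. No inequality or estimate is involved --- in particular the gap condition $\sigma_r>\sigma_{r+1}$ plays no role here, since $N$ is an arbitrary normal vector --- in contrast with \cref{thm:distanceDiff}, where inverting $I-L_R(N)$ was the crux.
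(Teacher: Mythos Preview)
Your proof is correct and follows essentially the same route as the paper: both use the explicit form \cref{eqn:weingartenClear} of the Weingarten map to read off its action on the rank-one matrices $u_av_b^T$, identify the $2\times2$ blocks on $\mathrm{span}\{u_iv_{r+j}^T,u_{r+j}v_i^T\}$, and then do a dimension count for the kernel. The only cosmetic difference is that you first build an orthonormal basis of $\TT{R}$ and then diagonalise, whereas the paper plugs $\Phi_{i,r+j}^\pm$ directly into \cref{eqn:weingartenClear}, verifies they are tangent via horizontal coordinates, and finishes with the same dimension count $(m-k)r+(l-k-r)r+2kr=(l+m)r-r^2$.
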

	\begin{proof}
						From \cref{eqn:weingartenClear}, it is clear that
			$L_R(N)\Phi_{i,r+j}^\pm=\kappa_{i,r+j}^\pm(N) \Phi_{i,r+j}^\pm$. In addition,
			$\Phi_{i,r+j}^\pm$ is indeed a tangent vector as one can write
			$\Phi_{i,r+j}^\pm=X_UZ^T\pm UX_Z^T$ with: \[
				(X_U,X_Z)=\frac{1}{\sqrt{2}\sigma_{r+j}\sigma_i}(Nv_{r+j}u_i^TU,
			N^Tu_{r+j}v_i^TZ).\] Therefore $(\Phi_{i,r+j}^\pm)$ is a family of $2kr$
			independent eigenvectors.  Then it is easy to check that
			$\mathrm{span}\{(u_iv^T)_{1\<i\<r} | Nv=0\}$ and
			$\mathrm{span}\{(uv_i^T)_{1\<i\<r} | u^TN=u^TU=0\}$ are null eigenspaces of
			respective dimension $(m-k)r$ and $(l-k-r)r$. The total dimension obtained
			is $(m-k)r+(l-k-r)r+2kr=mr+lr-r^2$, implying that the full spectral
			decomposition has been characterized.
	\end{proof}
	This theorem shows that the maximal curvature of $\M$ (for normalized normal
	directions $||N||=1$) is $\sigma_r(\RR)^{-1}$ and hence diverges as the smallest
	singular value goes to 0. This fact
	confirms what is visible on \cref{fig:plotManifold}:  the manifold $\M$ can be seen
	as a collection of cones or as a multidimensional spiral, whose axes are the lower
	dimensional manifolds of matrices of rank strictly  less than $r$. Applying directly the
	formula \cref{eqn:diffProjection} of \cref{thm:distanceDiff}, one obtains an
	explicit expression for the differential of the truncated SVD:
	\begin{theorem}
		\label{thm:diffProjection}
		Consider $\RR\in\MlM$ with rank greater than $r$ and denote
		$\RR=\sum_{i=1}^{r+k}\sigma_i u_iv_i^T$ its SVD decomposition, where the singular
		values are ordered decreasingly: $\sigma_1\>\sigma_2\>\dots\>\sigma_{r+k}$.
		Suppose that the orthogonal projection  $\Pi_{\M}(\RR)=UZ^T$ of $\RR$ onto $\M$
		is uniquely defined, that is $\sigma_r>\sigma_{r+1}$. Then $\Pi_\M$, the
		truncated SVD of order $r$, is differentiable at $\RR$ and the differential
		$\DD_\XX\Pi(\RR)$ in a direction $\XX\in\MlM$ is given by the  formula
		\begin{multline}\label{eqn:diffSVD}
			\DD_\XX\Pi_{\M}(\RR)=\Pi_{\TT{\Pi_{\M}(R)}}(\XX)\\+\sum_{\substack{1\<i\<r\\1\<j\<k}} \left[\frac{\sigma_{r+j}}{\sigma_i-\sigma_{r+j}}
\langle\XX,\Phi_{i,r+j}^+
\rangle\Phi_{i,r+j}^+-\frac{\sigma_{r+j}}{\sigma_i+\sigma_{r+j}}\langle
\XX,\Phi_{i,r+j}^- \rangle\Phi_{i,r+j}^-\right],\end{multline} where
$\Phi_{i,r+j}^\pm$ are the principal directions of equation
\cref{eqn:principalDirectionMatrix}.  More explicitly,
		\begin{multline}
		\label{eqn:diffSVDexplicit}
	\DD_\XX \Pi_\M(\RR)=(I-UU^T)\XX Z(Z^TZ)^{-1}Z^T+UU^T\XX \\ +\sum_{\substack{1\<i\<r\\1\<j\<k}} \frac{\sigma_{r+j}}{\sigma_i^2-\sigma_{r+j}^2} [ (\sigma_i u_{r+j}^T\XX v_i+\sigma_{r+j} u_i^T\XX v_{r+j})u_{r+j}v_i^T\\+(\sigma_{r+j}u_{r+j}^T\XX v_i+\sigma_iu_i^T\XX v_{r+j})u_iv_{r+j}^T].\end{multline}
	\end{theorem}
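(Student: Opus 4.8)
The plan is to derive \cref{thm:diffProjection} as a direct specialization of the abstract differentiability result \cref{thm:distanceDiff} to the fixed rank manifold, feeding in the explicit curvature spectrum computed in \cref{thm:curvature}. All the geometric content has already been established, so the only genuine work is to check that the hypotheses \cref{eqn:CondunicityProj} and \cref{eqn:Condfrontier} of \cref{thm:distanceDiff} hold on a neighborhood of $\RR$; everything afterwards is substitution and algebraic rearrangement.

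First I would exhibit the required open set. By \cref{def:projectionM}, the assumption $\sigma_r(\RR)>\sigma_{r+1}(\RR)$ guarantees that $\RR$ has a unique closest point $\Pi_\M(\RR)$ on $\M$. Since the singular values depend continuously on the matrix, the set $\Omega=\{\RR'\in\MlM\,|\,\sigma_r(\RR')>\sigma_{r+1}(\RR')\}$ is open, contains $\RR$, and \cref{eqn:CondunicityProj} holds throughout $\Omega$. For the boundary condition \cref{eqn:Condfrontier}, observe that $\overline{\M}\setminus\M$ is exactly the set of matrices of rank strictly less than $r$: for any such $R'$ and any $\RR'\in\Omega$, the best rank-$(r-1)$ approximation bound gives $||\RR'-R'||^2\ge\sum_{i\ge r}\sigma_i(\RR')^2=\sigma_r(\RR')^2+||\RR'-\Pi_\M(\RR')||^2>||\RR'-\Pi_\M(\RR')||^2$, because $\sigma_r(\RR')>\sigma_{r+1}(\RR')\ge 0$ forces $\sigma_r(\RR')>0$. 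Hence \cref{eqn:Condfrontier} also holds on $\Omega$.

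With the hypotheses in place, \cref{thm:distanceDiff} applies: $\Pi_\M$ is differentiable at $\RR$ and \cref{eqn:diffProjection} holds, with the Weingarten map evaluated at $R=\Pi_\M(\RR)=\sum_{i=1}^r\sigma_iu_iv_i^T=UZ^T$ in the normal direction $N=\RR-R=\sum_{j=1}^k\sigma_{r+j}u_{r+j}v_{r+j}^T$ (one checks $U^TN=0$ and $NZ=0$ from orthonormality, so $N\in\NN{R}$ as required, with $U=[u_1,\dots,u_r]$ and $Z=[\sigma_1v_1,\dots,\sigma_rv_r]$). \cref{thm:curvature}, applied to this very pair $(R,N)$, supplies the full spectrum: the nonzero principal curvatures are $\kappa_{i,r+j}^{\pm}(N)=\pm\,\sigma_{r+j}/\sigma_i$ with unit principal directions $\Phi_{i,r+j}^{\pm}$, and all the others vanish. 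Substituting $\kappa_{i,r+j}^{+}/(1-\kappa_{i,r+j}^{+})=\sigma_{r+j}/(\sigma_i-\sigma_{r+j})$ and $\kappa_{i,r+j}^{-}/(1-\kappa_{i,r+j}^{-})=-\sigma_{r+j}/(\sigma_i+\sigma_{r+j})$ into the second line of \cref{eqn:diffProjection} gives \cref{eqn:diffSVD}. Because the singular values are ordered, $\sigma_i\ge\sigma_r>\sigma_{r+1}\ge\sigma_{r+j}$ for every admissible pair, so each $\kappa_{i,r+j}^{+}<1$ and the denominators $\sigma_i-\sigma_{r+j}$ are strictly positive, consistently with \cref{thm:distanceDiff}.

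The componentwise identity \cref{eqn:diffSVDexplicit} is then obtained from \cref{eqn:diffSVD} by bookkeeping: the tangent part $\Pi_{\TT{\Pi_\M(\RR)}}(\XX)$ is written out as the matrix $(I-UU^T)\XX Z(Z^TZ)^{-1}Z^T+UU^T\XX$ via \cref{eqn:projectionMap}, and each rank-one term is expanded using $\langle\XX,u_av_b^T\rangle=u_a^T\XX v_b$; putting the $\Phi^{+}$ and $\Phi^{-}$ contributions over the common denominator $\sigma_i^2-\sigma_{r+j}^2$ and collecting the coefficients of $u_{r+j}v_i^T$ and of $u_iv_{r+j}^T$ delivers the stated formula. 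The step needing the most care is the verification of \cref{eqn:Condfrontier}: because $\M$ is not closed, one must genuinely exclude a strictly closer competitor of rank below $r$, and this is exactly where the singular-value gap $\sigma_r>\sigma_{r+1}$ (equivalently, $\RR$ lying off the closure of the skeleton $\Sk(\M)$) is used; the subsequent algebra is routine but error-prone.
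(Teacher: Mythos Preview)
Your proposal is correct and follows exactly the paper's approach: verify the two hypotheses \cref{eqn:CondunicityProj} and \cref{eqn:Condfrontier} of \cref{thm:distanceDiff} on the open set where $\sigma_r>\sigma_{r+1}$, then substitute the curvature eigenpairs $(\kappa_{i,r+j}^{\pm},\Phi_{i,r+j}^{\pm})$ from \cref{thm:curvature} into \cref{eqn:diffProjection}. If anything, you are more thorough than the paper, which merely asserts that \cref{eqn:Condfrontier} holds because $\overline{\M}\setminus\M$ consists of matrices of rank $<r$, and does not spell out the passage from \cref{eqn:diffSVD} to \cref{eqn:diffSVDexplicit}.
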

	\begin{proof}
		The set $\{\RR\in\MlM, \sigma_{r+1}(\RR)>\sigma_r(\RR)\}$ is open by continuity
		of the singular values, therefore condition (\ref{eqn:CondunicityProj}) of
		 \cref{thm:distanceDiff} is fulfilled. The boundary
		$\overline{\M}\backslash \M$ is the set of matrices of rank strictly lower than
		$r$, hence condition (\ref{eqn:Condfrontier}) is also fulfilled.  Equation
		(\ref{eqn:diffSVD}) follows by replacing $\kappa_i(N)$ and $\Phi_i$ in
		\cref{eqn:diffProjection} by the corresponding curvature eigenvalues
		$\pm\frac{\sigma_{r+j}}{\sigma_i}$ and eigenvectors $\Phi_{i,r+j}^\pm$ of
		\cref{thm:curvature}.
	\end{proof}
	\begin{remark}
	Dehaene  \cite{dehaene1995continuous}
		 and Dieci and Eirola \cite{dieci1999smooth} have previously derived formulas for 
		the time derivative of singular values and singular vectors of a smoothly varying
		matrix. One can also certainly use these results to find 
		formula \cref{eqn:diffSVDexplicit} 
		by differentiating singular values $(\sigma_i)$ and singular vectors 
		$(u_i), (v_i)$ separately in $\sum_{i=1}^r \sigma_i u_i v_i^T$. In the present work, 
		the proof of \cref{thm:diffProjection} does not require singular values to remain
		simple, and formula \cref{eqn:diffSVD} is obtained directly from its geometric
		interpretation.
	\end{remark}
	\section{The Dynamically Orthogonal Approximation} \label{sec:DO}

	The above results are now utilized for model order reduction. Following the
	introduction, the DO approximation is defined to be the dynamical system obtained
	by replacing the vector field $\mathcal{L}(t,\cdot)$ with its tangent projection on
	the manifold. (\cref{fig:tangentProj}).

	\begin{definition} The maximal solution in time of the \emph{reduced} dynamical
		system on $\M$,
		\begin{equation} \label{eqn:DOsystemAbstract}
			\left\{\begin{array}{cl}\dot{R}&=\Pi_\TT{R}(\LL(t,R))\\ R(0) &=
			\Pi_\M(\RR(0)),\end{array}\right.
		\end{equation} is called the \emph{Dynamically Orthogonal} (DO) approximation of
		\cref{eqn:dRstar}. The solution $R(t)=U(t)Z^T(t)$ is governed by a dynamical
		system for the mode matrix $U$ and the coefficient matrix $Z$ such that
		$(\dot{U},\dot{Z})\in \mathcal{H}_{(U,Z)}$ satisfies the \emph{dynamically
		orthogonal condition} $U^T\dot{U}=0$ at every instant:
		\begin{equation}\label{eqn:DOsystem} \left\{\begin{array}{rl} \dot{U}= &
				(I-UU^T)\mathcal{L}(t,UZ^T)Z(Z^TZ)^{-1} \\ \dot{Z}= & \mathcal{L}(t,UZ^T)^T
		U\\ U(0)Z(0)^T=& \Pi_\M(\RR(0)).\end{array}\right. \end{equation}
	\end{definition}
	\begin{remark}
		Equations \cref{eqn:DOsystem} are exactly those presented as DO equations in
		\cite{sapsis2009dynamically,sapsis2011dynamically}. With the notation of
		\cref{eqn:SPDE,eqn:KLdecomposition},  using $\langle \cdotp,\cdotp \rangle$ to
		denote the continuous dot product operator (an integral over the spatial domain)
		and $\mathbb{E}$ the expectation, they were written as the following set of
		coupled stochastic PDEs:
		\begin{equation}
			\label{eqn:DOsapsis}
			\left\{\begin{aligned} \partial_t \zeta_i & = \langle {\mathscr{L}(t,{\bm
							u}_\textrm{DO}^{\textrm{}};\omega)},{\bm u}_i \rangle\\ \sum_{j=1}^r
							\mathbb{E}[\zeta_i \zeta_j]\partial_t {\bm u}_j &
							=\mathbb{E}\left[\zeta_i\left({\mathscr{L}(t,{\bm
										u}_\textrm{DO}^{\textrm{}};\omega)}-\sum_{j=1}^r \langle
							{\mathscr{L}(t,{\bm u}_\textrm{DO}^{\textrm{}};\omega)},{\bm u}_j
				\rangle{\bm u}_j\right)\right]\;.
			\end{aligned}\right.
		\end{equation}
		However, when dealing with infinite dimensional Hilbert spaces, the vector space
		of solutions of \cref{eqn:SPDE} depends on the PDEs, which complicates the
		derivation of a general theory for \cref{eqn:DOsapsis}. Considering the DO
		approximation as a computational method for evolving low rank matrices relaxes
		these issues through the finite-dimensional setting.
	\end{remark}
	\begin{remark}
		One can relate \cref{eqn:DOsystemAbstract} to projected dynamical systems
		encountered in optimization \cite{Nagurney2012}, where the manifold $\M$ is
		replaced with a compact convex set.
	\end{remark}
	In the following, two justifications of the accuracy of this approximation are
	given. First, the DO approximation is shown to be the continuous limit of a scheme
	that would truncate the SVD of the full matrix solution after each time step, and hence is
	instantaneously optimal among any other possible model order reduced system. Then,
	its dynamics is compared to that of the best low rank approximation, yielding
	error bounds on global integration times. The efficiency of the DO approach in the
	context of the discretization of a stochastic PDE is not discussed here. These
	points are examined in \cite{Feppon2016a} and in references cited therein.  

	\subsection{The DO system applies instantaneously the truncated SVD} This paragraph
	details first a ``computational'' interpretation of the DO approximation.  Consider
	the temporal integration of the dynamical system \cref{eqn:dRstar} over $(t^n,
	t^{n+1})$, \begin{equation} \label{eqn:fullEulerScheme} \RR^{n+1}=\RR^n+\Delta t
	\overline{\mathcal{L}}(t^n,\RR^n,\Delta t),\end{equation} where
	$\overline{\mathcal{L}}(t,\RR,\Delta t)$ denotes the full-space integral
	$\overline{\mathcal{L}}(t,\RR,\Delta t)=\frac{1}{\Delta t}\int_t^{t+\Delta t}
	\LL(s,\RR(s))\D s$ for the exact integration or the increment function
	\cite{Haier2000} for a numerical integration.  Examples of the latter include
	$\overline{\mathcal{L}}(t,\RR,\Delta t)=\LL(t,\RR)$ for forward Euler and
	$\overline{\mathcal{L}}(t,\RR,\Delta t)=\LL(t+\Delta t/2,\RR+\Delta t/2\,
	\LL(t,\RR))$ for a second-order Runge-Kutta scheme.  Assume that the solution
	$\RR^n$ at time $t^n$ is well approximated by a rank $r$ matrix $R^n$. A natural
	way to  estimate the best rank $r$ approximation $\Pi_\M(\RR^{n+1})$ at the next
	time step is then to set
	\begin{equation} \label{eqn:fullSVDscheme}
		\left\{\begin{aligned}
			R^{n+1} & =\Pi_\M(R^n+\Delta t\overline{\LL}(t,R^n,\Delta t))\\
			R^0 & = \Pi_\M(\RR(0)).
		\end{aligned}\right.\end{equation}
		Such a numerical scheme uses the truncated SVD, $\Pi_\M$, to remove after each time
		step of the initial time-integration \cref{eqn:fullEulerScheme} the optimal amount
		of information required to constrain the rank of the solution. A data-driven
		adaptive version of this approach was for example used in
		\cite{Lermusiaux1997,lermusiaux_DAO1999}. 
		One can then look for a dynamical system for which
		\cref{eqn:fullSVDscheme} would be a temporal discretization. One then finds that,
		for any rank $r$ matrix $R\in\M$,
		\begin{equation}\label{eqn:consistency}\frac{\Pi_{\M}(R+\Delta
			t\overline{\mathcal{L}}(t,R,\Delta t))-R}{\Delta t}\underset{\Delta t\rightarrow
			0}\longrightarrow \DD_{\overline{\mathcal{L}}(t,R,0)}\Pi_\M(R)=
		\Pi_\TT{R}(\mathcal{L}(t,R))\end{equation} holds true since the curvature term
		depending on $N=R-\Pi_\M(R)=0$ vanishes in \cref{eqn:diffProjection}, and
		$\overline{\LL}(t,R,0)=\LL(t,R)$ by consistency of the time marching with the exact
		integration \cref{eqn:fullEulerScheme} \cite{Haier2000}. This implies, under
		sufficient regularity condition on $\LL$, that the continuous limit of the scheme
		\cref{eqn:fullSVDscheme} is the DO dynamical system \cref{eqn:DOsystemAbstract}.

	\begin{theorem}
		\label{prop:convergenceScheme}
		Assume that the DO solution \cref{eqn:DOsystemAbstract} is defined on a time
		interval $[0,T]$ discretized with $N_T$ time steps $\Delta t=T/N_T$ and denote
		$t^n=n\Delta t$. Consider $R^n$ the sequence obtained from the class of schemes
		\cref{eqn:fullSVDscheme}.  Assume that $\mathcal{L}$ is Lipschitz continuous,
		that is there exists a constant $K$ such that
		\begin{equation}\label{eqn:lipschitz} \forall t\in [0,T], ~\forall A,B\in \MlM, ~
		||\mathcal{L}(t,A)-\mathcal{L}(t,B)||\< K||A-B||.\end{equation} Then the sequence
		$R^{n}$ converges uniformly to the DO solution $R(t)$ in the following sense: \[
		\sup_{0\<n\<N_T} ||R^n-R(t^n)||\underset{ \Delta t\rightarrow 0}{\longrightarrow}
	0\]
	\end{theorem}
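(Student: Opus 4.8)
The plan is to prove convergence along the classical \emph{consistency-plus-stability} route for one-step schemes, the essential geometric input being that the differential of the truncated SVD (\cref{thm:diffProjection}) reduces, \emph{on} the manifold $\M$, to the tangent projection $\Pi_{\TT{R}}$, so that the curvature corrections of \cref{thm:distanceDiff} act only at second order in the distance to $\M$. Concretely, I would first fix a \emph{tube} around the DO trajectory. The curve $\mathcal{C}=\{R(t):t\in[0,T]\}$ is compact and contained in $\M$; every point of $\M$ has $\sigma_{r+1}=0<\sigma_r$, so $\mathcal{C}$ lies at positive distance from the skeleton $\Sk(\M)=\{\sigma_r=\sigma_{r+1}\}$. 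Hence for some $\varepsilon_0>0$ the closed tube $\mathcal{C}_{\varepsilon_0}=\{A\in\MlM:\mathrm{dist}(A,\mathcal{C})\le\varepsilon_0\}$ avoids $\Sk(\M)$ and consists of matrices of rank $\ge r$ with a strict gap $\sigma_r>\sigma_{r+1}$; on $\mathcal{C}_{\varepsilon_0}$ the truncated SVD $\Pi_\M$ is therefore well defined and $C^1$ (it is the spectral projector selecting the invariant subspaces of $\RR^T\RR$ above $\sigma_{r+1}^2$, which varies smoothly while the gap persists), so $\DD\Pi_\M$ is bounded and uniformly continuous on $\mathcal{C}_{\varepsilon_0}$. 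Two consequences of \cref{thm:distanceDiff,thm:curvature} will be used: for $R\in\M$ the residual $N=R-\Pi_\M(R)$ vanishes, hence $\DD\Pi_\M(R)=\Pi_{\TT{R}}$ (this is \cref{eqn:consistency}); and for $A$ at distance $\delta$ from $\M$ the principal curvatures in the direction $N=A-\Pi_\M(A)$ are bounded by $\delta/\sigma_r^{\min}$, with $\sigma_r^{\min}=\min_{\mathcal{C}}\sigma_r>0$, so that $\|\DD\Pi_\M(A)\|=(1-\max_i\kappa_i(N))^{-1}\le 1+c_0\delta$ for $\delta$ small.

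Next I would make the consistency of the scheme \cref{eqn:fullSVDscheme} quantitative and uniform. Since $R(\cdot)$ solves $\dot R=\Pi_{\TT{R}}\mathcal{L}(t,R)$, it is $C^1$ with uniformly continuous derivative on $[0,T]$, whence $R(t^{n+1})=R(t^n)+\Delta t\,\Pi_{\TT{R(t^n)}}\mathcal{L}(t^n,R(t^n))+o(\Delta t)$ uniformly in $n$. A first-order Taylor expansion of $\Pi_\M$ at $R(t^n)\in\M$, invoking $\DD\Pi_\M(R(t^n))=\Pi_{\TT{R(t^n)}}$, the uniform continuity of $\DD\Pi_\M$ on the tube, and the boundedness of $\overline{\mathcal{L}}(t,\cdot,\Delta t)$ on compacts for small $\Delta t$, gives
\[
\Pi_\M\bigl(R(t^n)+\Delta t\,\overline{\mathcal{L}}(t^n,R(t^n),\Delta t)\bigr)=R(t^n)+\Delta t\,\Pi_{\TT{R(t^n)}}\overline{\mathcal{L}}(t^n,R(t^n),\Delta t)+o(\Delta t)
\]
uniformly. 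Subtracting and using that $\overline{\mathcal{L}}(t,R,\Delta t)\to\mathcal{L}(t,R)$ uniformly on compacts as $\Delta t\to 0$ (consistency of the time marching with the exact flow, cf.\ \cite{Haier2000}; an identity for forward Euler), the local truncation error $\tau^n:=\Pi_\M(R(t^n)+\Delta t\,\overline{\mathcal{L}}(t^n,R(t^n),\Delta t))-R(t^{n+1})$ obeys $\|\tau^n\|\le\Delta t\,\eta(\Delta t)$ with $\eta(\Delta t)\to 0$, uniformly in $n$.

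The stability step is then routine. Set $e^n=\|R^n-R(t^n)\|$; inserting $\tau^n$ and the definition of the scheme,
\[
R^{n+1}-R(t^{n+1})=\Pi_\M\bigl(R^n+\Delta t\,\overline{\mathcal{L}}(t^n,R^n,\Delta t)\bigr)-\Pi_\M\bigl(R(t^n)+\Delta t\,\overline{\mathcal{L}}(t^n,R(t^n),\Delta t)\bigr)+\tau^n.
\]
So long as $R^n\in\mathcal{C}_{\varepsilon_0/2}$, the two arguments of $\Pi_\M$ lie within $\Delta t\sup\|\overline{\mathcal{L}}\|$ of $\M$, hence on the segment joining them $\|\DD\Pi_\M\|\le 1+c_0\Delta t\sup\|\overline{\mathcal{L}}\|$; combining the mean-value inequality with the Lipschitz continuity of $\overline{\mathcal{L}}(t^n,\cdot,\Delta t)$ inherited from that of $\mathcal{L}$ yields $e^{n+1}\le(1+C\Delta t)e^n+\Delta t\,\eta(\Delta t)$. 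Since both initial data equal $\Pi_\M(\RR(0))$ we have $e^0=0$, and the discrete Gr\"onwall lemma gives $\sup_{0\le n\le N_T}e^n\le\frac{e^{CT}-1}{C}\,\eta(\Delta t)\to0$. A standard bootstrap closes the argument: this bound is $\le\varepsilon_0/2$ once $\Delta t$ is small, so by strong induction $R^n$ never leaves $\mathcal{C}_{\varepsilon_0/2}$ and all the above estimates remain valid.

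The delicate point is the uniform consistency estimate: the pointwise limit \cref{eqn:consistency} must be turned into a bound on $\|\tau^n\|/\Delta t$ that is uniform in $n$ and vanishes with $\Delta t$. This rests squarely on the geometric analysis of \cref{sec:curvature,sec:curvatureMatrix}: that $\DD\Pi_\M$ equals the tangent projection on $\M$ (the $O(1)$ curvature term of \cref{thm:diffProjection}, being proportional to the null residual $N=R-\Pi_\M(R)$, drops out), that $\Pi_\M$ is $C^1$ with uniformly continuous differential in a neighborhood of the trajectory (\cref{thm:distanceDiff} away from the skeleton), and---so that the Gr\"onwall constant does not degenerate---that at points only $O(\Delta t)$ from $\M$ one still has the near-isometry $\|\DD\Pi_\M\|\le 1+O(\Delta t)$, read off the principal curvatures of \cref{thm:curvature}.
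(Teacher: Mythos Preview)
Your proposal is correct and follows essentially the same route as the paper: both argue consistency plus stability for the one-step scheme on a compact tube around the DO trajectory that avoids the skeleton, using the $C^1$ regularity of $\Pi_\M$ there and the identity $\DD\Pi_\M(R)=\Pi_{\TT{R}}$ for $R\in\M$. The only technical difference is in the stability step: the paper writes the increment function as $\Phi(t,R,\Delta t)=\int_0^1 \DD\Pi_\M(R+\tau\Delta t\,\overline{\LL})\cdot\overline{\LL}\,\D\tau$, which absorbs the $1/\Delta t$ and makes Lipschitz continuity of $\Phi$ follow directly by composition, whereas you obtain the recursion $e^{n+1}\le(1+C\Delta t)e^n+\Delta t\,\eta(\Delta t)$ via the near-isometry bound $\|\DD\Pi_\M\|\le 1+O(\Delta t)$ at distance $O(\Delta t)$ from $\M$ together with an explicit bootstrap; both devices achieve the same end.
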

	\begin{proof}
		It is sufficient to check that the scheme \cref{eqn:fullSVDscheme} is both
		consistent and stable (see \cite{Haier2000}). Denote $\Phi$ the increment
		function of the scheme \cref{eqn:fullSVDscheme}:
		\begin{equation}\label{eqn:incrFunction} \Phi(t,R,\Delta t)=\frac{\Pi_\M(R+\Delta
		t\overline{\LL}(t,R,\Delta t))-R}{\Delta t}=\frac{1}{\Delta t}\int_0^1
	\frac{\D}{\D \tau}\Pi_\M(g(R,t,\tau,\Delta t))\D \tau\end{equation} with
	$g(R,t,\tau,\Delta t)=R+\tau \Delta t\overline{\LL}(t,R,\Delta t)$. Consider a
	compact neighborhood $\mathcal{U}$ of $\MlM$ containing the trajectory $R(t)$ on
	the interval $[0,T]$ and sufficiently thin such that $\mathcal{U}$ does not
	intersect the skeleton of $\M$. In particular, $\Pi_\M$ is differentiable with
	respect to $R$ on the compact neighborhood $\mathcal{U}$, hence Lipschitz
	continuous. The consistency of \cref{eqn:fullSVDscheme} and continuity of $\Phi$ on
	$[0,T]\times \mathcal{U}\times \R$ follows from \cref{eqn:consistency}. For usual
	time marching schemes (e.g.\;Runge Kutta), the Lipschitz condition
	\cref{eqn:lipschitz} also holds for the map $R\mapsto \overline{\LL}(t,R,\Delta
	t)$. Therefore it  $\Phi$ is also Lipschitz continuous with respect to $R$ on
	$\mathcal{U}$ by composition. This is a sufficient stability condition.
	\end{proof}

	As such, the DO approximation can be interpreted as the dynamical system that
	applies instantaneously the truncated SVD to constrain the rank of the solution.
	Therefore, other reduced order models of the form \cref{eqn:dRmanifold} are
	characterized by larger errors on short integration times for solutions whose
	initial value lies on $\M$.
	\begin{remark}
		Other dynamical systems that perform instantaneous matrix operations have been
		derived in \cite{brockett1988,Smith1991}, and in \cite{dehaene1995continuous}
		(e.g.\;Lemma\;3.4 and Corollary\;3.5) or \cite{dieci1999smooth} (sections 2.1 and
		2.3.) for tracking the full SVD or QR decomposition. Continuous SVD has been
		combined with adaptive Kalman filtering in uncertainty quantification to
		continuously adapt the dominant subspace supporting the stochastic solution
		\cite{Lermusiaux1997,lermusiaux_DAO1999,lermusiaux2001evolving}. All of these
		results utilized the instantaneous truncated SVD concept and formed the
		computational basis of the continuous DO dynamical system. In fact, the dominant
		singular vectors of state transition matrices and other operators have found
		varied applications in atmospheric and ocean sciences for some time
		\cite{farrell1996generalized_partI,farrell1996generalized_partII,palmer1998singular,hoskins2000nature,lermusiaux_PhysD2007,moore2004comprehensive,kalnay2003atmospheric,Diaconescu_laprise_SV_review_ESR2012}.
	\end{remark}

	\subsection{The DO approximation is close to the dynamics of the best low rank
	approximation of the original solution}
	Ideally, a model order reduced solution $R(t)$ would coincide at all times with the
	best rank $r$ approximation $\Pi_\M(\RR(t))$, so as to keep the error
	$||\RR(t)-R(t)||$ minimal. However, $\Pi_\M(\RR(t))$ is not
	the solution of a reduced system of the form \cref{eqn:dRmanifold} as its time
	derivative depends on the knowledge of the true solution $\RR$ in the full space
	$\mathcal{M}_{l,m}$. Indeed, formula \cref{eqn:diffSVDexplicit} for the differential
	of the SVD yields the following system of ODEs for
	the evolution of modes and coefficients of the best
	rank$-r$ approximation $\Pi_\M(\RR(t))$:
	\begin{equation}
		\label{eqn:SVDtracking}
		\left\{\begin{aligned}
				\dot{U}
				&=(I-UU^T)\dot{\RR}Z(Z^TZ)^{-1} \\
				& \qquad +\left[\sum_{\substack{1\<i\<r\\1\<j\<k}}
				\frac{\sigma_{r+j}}{\sigma_i^2-\sigma_{r+j}^2} (\sigma_i u_{r+j}^T\dot{\RR}
			v_i+\sigma_{r+j} u_i^T\dot{\RR} v_{r+j})u_{r+j}v_i^T\right]Z(Z^TZ)^{-1}\\
			\dot{Z} & =\dot{\RR}^TU+\left[\sum_{\substack{1\<i\<r\\1\<j\<k}}
				\frac{\sigma_{r+j}}{\sigma_i^2-\sigma_{r+j}^2} (\sigma_{r+j} u_{r+j}^T\dot{\RR}
			v_i+\sigma_{i} u_i^T\dot{\RR} v_{r+j})v_{r+j}u_i^T\right]U,
		\end{aligned}
	\right.
\end{equation}
where the (time-dependent) SVD of $\RR(t)$ at the time $t$ is
$\sum_{i=1}^{r+k}\sigma_i u_iv_i^T$ with $k=\min(m,l)$ (allowing possibly
$\sigma_{r+j}=0$ for $1\<j\<k$).  One therefore sees from this best rank$-r$
governing differential \cref{eqn:SVDtracking} that its reduced DO system
\cref{eqn:DOsystemAbstract} is obtained by (i) replacing the derivative
$\dot{\RR}=\LL(t,\RR)$ with the approximation $\LL(t,R)$ (first terms in each of the
right-hand sides of \cref{eqn:SVDtracking}), and (ii) neglecting the dynamics
corresponding to the interactions between the low-rank$-r$ approximation (singular
values and vectors of order $1\<i\<r$) and the neglected normal component (singular
values and vectors of order $r+j$ for $1\<j\<k$).  These interactions are the last
summation terms in each right-hand sides of \cref{eqn:SVDtracking}. Estimating these
interactions in all generality would require, in addition to the knowledge of a rank
$r$ approximation $R\simeq \Pi_\M(\RR)$, either external observations
\cite{lermusiaux_DAO1999} or closure models \cite{Wang_closurePOD_CMAME2012}, so as
to estimate the otherwise neglected normal component $\RR-\Pi_\M(\RR)=\sum_{j=1}^k
\sigma_{r+j}u_{r+j}v_{r+j}^T$.

Comparing the dynamics \cref{eqn:DOsystem} of the DO approximation to that of the
governing differential \cref{eqn:SVDtracking} of the best low rank$-r$ approximation,
a bound for the growth of the DO error is now obtained. 

	\begin{theorem}
		\label{thm:DOError}
		Assume that both the original solution $\RR(t)\in\MlM$
		(eqn.\;$\cref{eqn:dRstar}$) and its DO approximation $R(t)$
		(eqn.\;$\cref{eqn:DOsystemAbstract}$) are defined on a time interval $[0,T]$ and
		that the following conditions hold:
		\begin{enumerate}
			\item
			\label{eqn:cond1} $\mathcal{L}$ is Lipschitz continuous, \emph{i.e.}\;equation \cref{eqn:lipschitz} holds.
			\item \label{eqn:cond2} The original (true) solution $\RR(t)$ 
				remains close to the low rank manifold $\M$, in the sense that
				$\RR(t)$ does not cross the skeleton of $\M$ on $[0,T]$, \emph{i.e.}\;there is no crossing of the singular value of order $r$: \[\forall
				t\in[0,T],~\sigma_r(\RR(t))>\sigma_{r+1}(\RR(t)).\]
		\end{enumerate}
		Then, the error of the DO approximation $R(t)$ (eqn.\;\cref{eqn:DOsystemAbstract})
		remains controlled by the best approximation error $||\RR-\Pi_{\M}(\RR(t))||$ on
		$[0,T]$:
		\begin{multline}\label{eqn:DOBound}\forall t\in [0,T], ~ ||R(t)-\Pi_\M(\RR(t))||
		\< \\ \int_0^t ||\RR(s)-\Pi_\M(\RR(s))||\left(
	K+\frac{||\LL(s,\RR(s))||}{\sigma_{r}(\RR(s))-\sigma_{r+1}(\RR(s))}\right)e^{\eta
(t-s)}\D s ,\end{multline}
		where $\eta$ is the constant
		\begin{equation}\label{eqn:growthRate} \eta=
			K+\sup_{t\in[0,T]}\frac{2}{\sigma_r(\RR(t))}||\LL(t,\RR(t))||.\end{equation}
	\end{theorem}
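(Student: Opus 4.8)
The plan is to estimate $\frac{\D}{\D t}\|R(t)-\Pi_\M(\RR(t))\|$ by comparing term-by-term the two governing systems: the DO equations \cref{eqn:DOsystem} for $R(t)=U(t)Z(t)^T$, and the best-approximation equations \cref{eqn:SVDtracking} for $\Pi_\M(\RR(t))$, which is legitimate precisely because condition \ref{eqn:cond2} guarantees $\RR(t)$ never touches the skeleton $\Sk(\M)$, so $\Pi_\M\circ\RR$ is differentiable on $[0,T]$ by \cref{thm:diffProjection}. Writing $E(t)=R(t)-\Pi_\M(\RR(t))$, I would split $\dot E = \dot R - \frac{\D}{\D t}\Pi_\M(\RR)$ into three pieces using the decomposition highlighted in the text after \cref{eqn:SVDtracking}: (i) the discrepancy between $\Pi_\TT{R}(\LL(t,R))$ and $\Pi_\TT{\Pi_\M(\RR)}(\LL(t,\RR))$, i.e.\ the ``tangent part'' evaluated at two nearby points with two nearby vector fields; (ii) the curvature/interaction correction term in \cref{eqn:SVDtracking}, which is $\DD\Pi_\TT{\Pi_\M(\RR)}(\cdot)\cdot N$ with $N=\RR-\Pi_\M(\RR)$ the normal residual — this is exactly a Weingarten contribution and is controlled by \cref{prop:matrixWeingarten}/\cref{thm:curvature}; and a cross term. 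Then a Gr\"onwall argument on $\|E(t)\|$ should yield \cref{eqn:DOBound}.

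The first step is to bound piece (i). I would write
$\Pi_\TT{R}(\LL(t,R)) - \Pi_\TT{\Pi_\M(\RR)}(\LL(t,\RR)) = \big(\Pi_\TT{R}-\Pi_\TT{\Pi_\M(\RR)}\big)(\LL(t,R)) + \Pi_\TT{\Pi_\M(\RR)}(\LL(t,R)-\LL(t,\RR))$.
The second summand is handled by the Lipschitz hypothesis \cref{eqn:lipschitz}: it is bounded by $K\|R-\RR\| \le K(\|E\| + \|\RR-\Pi_\M(\RR)\|)$ since $\Pi_{\TT{}}$ is a contraction. The first summand requires a Lipschitz-in-$R$ estimate on the projector $R\mapsto\Pi_\TT{R}$ along $\M$; here the differential of $R\mapsto\Pi_\TT{R}$ is governed by the Christoffel symbol $\Gamma$ (Proposition \ref{prop:connection}, \cref{eqn:christoffel}), whose operator norm is $O(1/\sigma_r(R))$ by the Weingarten analysis. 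This is where the factor $\frac{1}{\sigma_r}\|\LL\|$ in $\eta$ enters. A subtlety: $R(t)$ and $\Pi_\M(\RR(t))$ are two points of $\M$, not infinitesimally close, so I need to integrate the differential along a path (e.g.\ a geodesic or the straight SVD-truncated segment) joining them — I expect the cleanest route is to bound $\|(\Pi_\TT{R}-\Pi_\TT{\Pi_\M(\RR)})(\XX)\|$ directly via the explicit formula \cref{eqn:projectionMap}, using $\|E\|$ small and the singular-value gap to control $(Z^TZ)^{-1}$.

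The second and main step is piece (ii), the interaction term. Its magnitude is exactly $\|\DD\Pi_\TT{\Pi_\M(\RR)}\big(\tfrac{\D}{\D t}\Pi_\M(\RR)\big)\cdot N\|$ with $N=\RR-\Pi_\M(\RR)$, which by the Weingarten map \cref{eqn:matrixweingartenMap} and \cref{thm:curvature} is bounded by $\frac{\|N\|}{\sigma_r(\Pi_\M(\RR))}\,\|\tfrac{\D}{\D t}\Pi_\M(\RR)\|$; and $\tfrac{\D}{\D t}\Pi_\M(\RR) = \DD_{\LL(t,\RR)}\Pi_\M(\RR)$, again bounded (using \cref{eqn:diffProjection} and $\kappa_i<1$ — in fact $\kappa_{i,j}^\pm = \pm\sigma_{r+j}/\sigma_i$ so $|\kappa|<1$ follows from the gap $\sigma_r>\sigma_{r+1}$) in terms of $\|\LL(t,\RR)\|$. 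This produces the explicit summand $\frac{\|\RR-\Pi_\M(\RR)\|\,\|\LL(s,\RR)\|}{\sigma_r(\RR)-\sigma_{r+1}(\RR)}$ appearing in \cref{eqn:DOBound}; note $\sigma_r(\Pi_\M(\RR))=\sigma_r(\RR)$, and the denominator gap rather than $\sigma_r$ alone arises from bounding $\tfrac{\D}{\D t}\Pi_\M(\RR)$ through $\kappa_i/(1-\kappa_i) = \sigma_{r+j}/(\sigma_i\mp\sigma_{r+j})$. The main obstacle, I expect, will be organizing the various constants so that \emph{all} the curvature corrections collapse into the single exponential rate $\eta$ in \cref{eqn:growthRate} and the single integrand of \cref{eqn:DOBound}, rather than a messier expression: this is bookkeeping of which terms are proportional to $\|E\|$ (hence go into the Gr\"onwall kernel $e^{\eta(t-s)}$) versus proportional to the residual $\|\RR-\Pi_\M(\RR)\|$ (hence go into the source term). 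Once the differential inequality $\frac{\D}{\D t}\|E\| \le \eta\|E\| + \|\RR-\Pi_\M(\RR)\|\big(K + \|\LL(s,\RR)\|/(\sigma_r-\sigma_{r+1})\big)$ is established, with $E(0)=0$ since $R(0)=\Pi_\M(\RR(0))$, the integrating-factor form of Gr\"onwall's lemma gives \cref{eqn:DOBound} immediately.
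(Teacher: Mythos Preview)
Your overall strategy matches the paper's proof exactly: bound $\|\dot R-\dot R^*\|$ (with $R^*=\Pi_\M(\RR)$) by splitting into a tangent-projector piece and a curvature/Weingarten piece, estimate each, and apply Gr\"onwall. Your treatment of piece (ii) is correct and equivalent to the paper's Corollary~B.1, which reads $\|\DD_\XX\Pi_\M(\RR)-\Pi_\TT{R^*}\XX\|\le \frac{\sigma_{r+1}}{\sigma_r-\sigma_{r+1}}\|\XX\|$ directly from the eigenvalues $\kappa_i/(1-\kappa_i)$ in \cref{eqn:diffProjection}.

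There is one real gap, precisely at the ``subtlety'' you flag. For the projector difference $\|\Pi_\TT{R}-\Pi_\TT{R^*}\|$, neither of your two suggested routes gives what is needed. Integrating the Christoffel symbol along a path from $R$ to $R^*$ only yields a bound involving $\inf\sigma_r$ \emph{along the path}, for which you have no control (nothing in the hypotheses bounds $\sigma_r(R(t))$ from below, and a geodesic could pass near low-rank matrices). Your alternative of using the explicit formula \cref{eqn:projectionMap} ``with $\|E\|$ small'' reintroduces exactly the smallness assumption on the truncation error that the theorem is designed to avoid. The paper instead invokes a purely algebraic global estimate (its Lemma~B.2, due to Wei et al.\ and Golub--Van Loan, Thm.~2.6.1):
\[
\|\Pi_\TT{R^1}-\Pi_\TT{R^2}\|\le \min\!\left(1,\ \frac{2}{\sigma_r(R^1)}\|R^1-R^2\|\right),
\]
valid for \emph{any} two rank-$r$ matrices, with the constant depending only on $\sigma_r$ at \emph{one} endpoint. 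Taking $R^1=R^*$ gives $\sigma_r(R^*)=\sigma_r(\RR)$, which is how $\sigma_r(\RR(t))$ (not $\sigma_r(R(t))$) appears in $\eta$, and where the factor $2$ in \cref{eqn:growthRate} comes from. This lemma is the missing ingredient.

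A minor bookkeeping point: the paper applies the projector difference to $\LL(t,\RR)$ rather than to $\LL(t,R)$, splitting the tangent piece into three terms instead of your two. This is what makes $\|\LL(t,\RR)\|$ (and not $\|\LL(t,R)\|$) appear in $\eta$ as stated; your two-term split would require an extra Lipschitz correction to reach the stated form.
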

	\begin{proof}
		A proof is given in \cref{app:proofError}.
	\end{proof}
	This statement improves the result expressed in \cite{koch2007dynamical} (Theorem
	5.1), since no assumption is made on the smallness of the best approximation error
	$||\RR-\Pi_\M(\RR)||$, nor on the boundedness of $||R-\Pi_\M(\RR)||$.
	\cref{thm:DOError} also highlights two sufficient conditions for the error
	committed by the DO approximation to remain small~: \subparagraph{Condition
	\ref{eqn:cond1}}  The discrete operator $\mathcal{L}$  must not be too sensitive to
	the error $\RR(t)-R(t)$, namely the Lipschitz constant $K$ must be small. This
	error is commonly encountered by any approximation made for evaluating the operator
	of a dynamical system (as a consequence of Gronwall's lemma \cite{hartman2002}).
	The Lipschitz constant $K$ also quantifies how fast the vector field $\mathcal{L}$
	may deviate from its values when getting away from the low rank manifold $\M$.
	\subparagraph{Condition \ref{eqn:cond2}}  Independently of the choice of the
	reduced order model, the solution of the initial system \cref{eqn:dRstar},
	$\RR(t)$, must remain close to the manifold $\M$, or in other words, must remain
	far from the skeleton $\Sk(\M)$ of $\M$. As visible on \cref{fig:parabola}, the
	best rank $r$ approximation $\Pi_\M(\RR)$ of $\RR$ exhibits a jump when $\RR$
	crosses the skeleton, \emph{i.e.}\;when $\sigma_r(\RR)=\sigma_{r+1}(\RR)$ occurs.
	At that point, the discontinuity of $\Pi_\M(\RR(t))$ cannot be tracked by the DO or
	any other smooth dynamical approximation.  Condition \ref{eqn:cond2} in some sense
	supersedes the stronger condition of ``smallness of the initial truncation error''
	of the error analysis of \cite{koch2007dynamical}.  Indeed, when
	$\sigma_r(\RR)\simeq\sigma_{r+1}(\RR)$ occurs, as observed numerically in
	\cite{musharbash2015error}, the DO solution may then diverge sharply from the SVD
	truncation. From the point of view of model order reduction, the resulting error
	can be related to the evolution of the residual $\RR-\Pi_\M(\RR)$ that is not
	accounted for by the reduced order model. When the crossing of singular values
	occurs, neglected modes in the approximation \cref{eqn:KLdecomposition} become
	``dominant", but cannot be captured by a reduced order model that has evolved only
	the first modes initially dominant. In such cases, one has to restart the
	simulations from the initial conditions with a larger subspace size or the size of
	the DO subspace has to be increased and corrections applied from external
	information. The latter learning of the subspace can be done from measurements or
	from additional Monte-Carlo simulations and breeding of the best low-rank$-r$
	approximation \cite{lermusiaux_DAO1999,kalnay2003atmospheric,sapsis2012dynamical}.

	Last, it should be noted that the growth rate $\eta$ (equation
	\cref{eqn:growthRate}) of  the error increases as the evolved trajectory becomes
	close to be singular, \emph{i.e.}\;when $\sigma_r(\RR(t))$ goes to zero. This
	growth rate comes mathematically from the Gronwall estimates of the proofs, and  is
	intuitively related to the fact the tangent projection $\Pi_\mathcal{T}$ in
	\cref{eqn:DOsystemAbstract} is applied at the location of the DO solution $R(t)$
	instead of the one of the best approximation $\Pi_\M(\RR(t))$. If the evolved
	trajectory is close to be singular, the local curvature of $\M$ experienced by the
	DO solution $R(t)$ and the best approximation $\Pi_\M(\RR(t))$ is high. Therefore
	the tangent spaces $\TT{R(t)}$ and $\TT{\Pi_\M(\RR(t))}$ may be oriented very
	differently because of this curvature, resulting in increased error when
	approximating the tangent projection operator $\Pi_\TT{\Pi_\M(\RR(t))}$ by
	$\Pi_\TT{R(t)}$ in the DO system \cref{eqn:DOsystemAbstract}.
	\begin{remark}
		\Cref{prop:convergenceScheme,thm:DOError} may be generalized in a straightforward
		manner to the case of any smooth embedded manifolds $\M\subset E$ (Theorem
		2.5 and 2.6 in \cite{FepponThesis}).
	\end{remark}
	\section{Optimization on the fixed rank matrix manifold for tracking the best low rank
	approximation}
	\label{sec:numerical}
	This section applies the framework of Riemannian matrix optimization
	\cite{edelman1998,absil2009all} as an alternative approach to the direct tracking
	of the truncated SVD of a time-dependent matrix $\RR(t)\in\MlM$. At the end, we
	provide a remark (\cref{remark:DO-optimization}) linking the two approaches within
	the context of the DO system.

	Consider a given (full-rank) matrix $\RR\in\MlM$ and recall that $\Pi_\M(\RR)$,
	when it is non-ambiguously defined, is the unique minimizer of the distance
	functional
	\begin{equation} \label{eqn:J}\begin{array}{ccccc}J &: &
		\M & \longrightarrow & \R\\ & & R & \longmapsto & \frac{1}{2}||R-\RR||^2 .
	\end{array}\end{equation}
		Riemannian optimization algorithms, namely gradient descents and Newton
		methods on the fixed rank manifold $\M$, are now used to provide alternative ways
		to more standard direct algebraic algorithms \cite{Golub2012} for
		evaluating the truncated SVD $\Pi_\M(\RR)$. Such optimizations can be useful to
		dynamically update the best low rank approximation of a time dependent matrix
		$\RR(t)$: this is because for a sufficiently small time step $\Delta t$,
		$R(t)=\Pi_\M(\RR(t))$ is expected to be close to $R(t+\Delta
		t)=\Pi_\M(\RR(t+\Delta t))$,  hence $\Pi_\M(\RR(t))$ provides a good initial
		guess for the minimization of $R\mapsto ||\RR(t+\Delta t)-R||$. The minimization
		of the distance functional $J$ has already been considered in the matrix
		optimization community
		\cite{absil2009optimization,vandereycken2013low,mishra2014} that derived gradient
		descent and Newton methods on the fixed rank manifold, but not in the case of the
		metric inherited from the ambient space $\MlM$ (eqn.\;\cref{eqn:metric}), which
		is done in what follows. As a benefit of this ``extrinsic'' approach already
		noticed in \cite{absil2013extrinsic}, the covariant Hessian of $J$ relates
		directly to the Weingarten map at critical points: this will allow obtaining the
		convergence of the gradient descent for almost every initial data
		(\cref{prop:localMinima}).

	Ingredients required for the  minimization of $J$ on the manifold $\M$ are first
	derived, namely the covariant gradient and Hessian. As reviewed in
	\cite{edelman1998}, usual optimization algorithms such as gradient and Newton
	methods can be straightforwardly adapted to matrix manifolds.
	The differences with their Euclidean counterparts is that: (i) usual gradient and
	Hessians must be replaced by their covariant equivalents; (ii) one needs to follow
	geodesics instead of straight lines to move on the manifold; and, (iii) directions
	followed at the previous time steps, needed for example in the conjugate gradient
	method, must be  transported to the current location (equation
	\cref{eqn:parallelTransport}). Covariant gradient and Hessian are recalled in the
	following definition (for details, see \cite{absil2009optimization}, chapter 5).
	\begin{definition}
		\label{def:gradienthessian}
		Let $J$ be a smooth function defined on $\M$ and $R\in\M$. The covariant gradient
		of $J$ at $R$ is the unique vector $\nabla J\in \TT{R}$ such that \[ \forall X\in
		\TT{R},\, J(\exp_R(tX))=J(R)+t\langle \nabla J,X \rangle+o(t).\] The covariant
		Hessian $\mathcal{H}J$ of $J$ at $R$ is  the linear map on $\TT{R}$ defined by \[
		\mathcal{H}J(X)=\nabla_X \nabla J, \] and the following second order Taylor
		approximation of $J$ holds: \[ J(\exp_R(tX))=J(R)+t\langle \nabla J,X
		\rangle+\frac{t^2}{2} \langle X,\mathcal{H}J(X) \rangle+o(t^2).\]
	\end{definition} The following proposition (see \cite{absil2013extrinsic}) explains
	how these quantities are related to the usual gradient and Hessian, so that they
	become accessible for computations.
	\begin{proposition} \label{prop:gradientHessian}
		Let $J$ be a smooth function defined in the ambient space $\MlM$ and denote $\DD
		J$  and $\DD^2 J$ its respective Euclidean gradient and Hessian. Then the
		covariant gradient and Hessian are given by
		\begin{equation} \label{eqn:covgradient} \nabla J=\Pi_\TT{R}(\DD
		J),\end{equation} \begin{equation}\label{eqn:covhessian}
		\mathcal{H}J(X)=\Pi_\TT{R}(\DD^2 J(X))+\DD\Pi_\TT{R}(X)\cdot
	\left[(I-\Pi_\TT{R})(\DD J)\right]. \end{equation}
	\end{proposition}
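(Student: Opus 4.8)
The plan is to treat the gradient and the Hessian in turn, in both cases reducing everything to the relation $\nabla_X Y=\Pi_\TT{R}(\DD_X Y)$ established in \cref{prop:connection}. For the gradient, one first observes that \cref{def:gradienthessian} makes $\nabla J\in\TT{R}$ the unique tangent vector with $\langle\nabla J,X\rangle=\DD_X J(R)$ for all $X\in\TT{R}$: differentiating $t\mapsto J(\exp_R(tX))$ at $t=0$ and using that the geodesic $\exp_R(tX)$ has initial velocity $X$ gives $\langle\nabla J,X\rangle=\DD_X J(R)$. Since $J$ is defined on all of $\MlM$, one has $\DD_X J(R)=\langle\DD J,X\rangle$, and because $\Pi_\TT{R}$ is the orthogonal projection onto $\TT{R}$ this equals $\langle\Pi_\TT{R}(\DD J),X\rangle$ for every $X\in\TT{R}$. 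As $\Pi_\TT{R}(\DD J)\in\TT{R}$, this forces $\nabla J=\Pi_\TT{R}(\DD J)$, which is \cref{eqn:covgradient}.

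For the Hessian, I would use $\mathcal{H}J(X)=\nabla_X\nabla J=\Pi_\TT{R}(\DD_X(\nabla J))$ from \cref{def:gradienthessian} together with \cref{prop:connection}, where $\nabla J$ is now regarded as the vector field $R\mapsto\Pi_\TT{R}(\DD J(R))$ on $\M$, which is smooth thanks to the explicit rational formula \cref{eqn:projectionMap}. Applying the product rule to the ambient directional derivative along a curve through $R$ with velocity $X$ gives
\[ \DD_X\big(\Pi_\TT{R}(\DD J)\big)=\DD\Pi_\TT{R}(X)\cdot(\DD J)+\Pi_\TT{R}(\DD^2 J(X)), \]
since $\DD_X(\DD J)=\DD^2 J(X)$ is precisely the Euclidean Hessian applied to $X$. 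It then remains to project this identity onto $\TT{R}$ and to split $\DD J=\Pi_\TT{R}(\DD J)+(I-\Pi_\TT{R})(\DD J)$ into its tangent and normal parts: by \cref{prop:connection}, $\DD\Pi_\TT{R}(X)\cdot\Pi_\TT{R}(\DD J)=-\Gamma\big(X,\Pi_\TT{R}(\DD J)\big)$ is \emph{normal}, so its tangential projection vanishes; by \cref{def:weingartenMap}, $\DD\Pi_\TT{R}(X)\cdot(I-\Pi_\TT{R})(\DD J)=L_R\big((I-\Pi_\TT{R})(\DD J)\big)X$ already lies in $\TT{R}$, so it is unchanged by $\Pi_\TT{R}$. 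Combining these with $\Pi_\TT{R}\circ\Pi_\TT{R}=\Pi_\TT{R}$ yields $\mathcal{H}J(X)=\Pi_\TT{R}(\DD^2 J(X))+\DD\Pi_\TT{R}(X)\cdot[(I-\Pi_\TT{R})(\DD J)]$, which is \cref{eqn:covhessian}.

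The only genuinely delicate point is the differentiation of the vector field $R\mapsto\Pi_\TT{R}(\DD J(R))$: one needs $R\mapsto\Pi_\TT{R}$ to be a smooth map into the linear operators on $\MlM$, which is guaranteed by the closed form \cref{eqn:projectionMap}, and one needs the compatibility of the ambient derivative $\DD_X$ of this vector field with the intrinsic covariant derivative $\nabla_X$, which is exactly the content of \cref{prop:connection}. Everything else is bookkeeping of tangent-versus-normal components, the key structural inputs being the normality of the Christoffel symbol and the fact that the Weingarten map $L_R(N)$ takes values in the tangent space.
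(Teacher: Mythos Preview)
Your argument is correct. The paper itself does not supply a proof of \cref{prop:gradientHessian}; it simply refers the reader to \cite{absil2013extrinsic}. Your derivation therefore fills in what the paper omits, and it does so using exactly the tools the paper has set up: the characterization $\nabla_X Y=\Pi_\TT{R}(\DD_X Y)$ from \cref{prop:connection}, the normality of the Christoffel symbol $\Gamma(X,Y)=-\DD\Pi_\TT{R}(X)\cdot Y$ for tangent $Y$, and the tangency of the Weingarten map $L_R(N)X=\DD\Pi_\TT{R}(X)\cdot N$ for normal $N$ from \cref{def:weingartenMap}. The splitting $\DD J=\Pi_\TT{R}(\DD J)+(I-\Pi_\TT{R})(\DD J)$ and the observation that the two pieces of $\DD\Pi_\TT{R}(X)\cdot(\DD J)$ land respectively in $\NN{R}$ and $\TT{R}$ is precisely the right bookkeeping, and your remark about the smoothness of $R\mapsto\Pi_\TT{R}$ via \cref{eqn:projectionMap} is the only regularity input needed.
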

Applying directly  \cref{prop:gradientHessian}, the gradient and the Hessian of $J$
	at $R=UZ^T\in \M$ are given by:
	\begin{equation}\label{eqn:covgradientJ}\nabla J
	=((I-UU^T)(UZ^T-\RR)Z(Z^TZ)^{-1},(UZ^T-\RR)^TU), \end{equation}
\begin{equation}\label{eqn:covhessianJ} \begin{array}{ccccc} \mathcal{H}J &: &
\mathcal{H}_{(U,Z)} & \rightarrow & \mathcal{H}_{(U , Z)} \\ &  &
\begin{pmatrix}X_U\\X_Z\end{pmatrix} & \mapsto  &
\begin{pmatrix}X_U-N_{UZ^T}(\RR)X_Z(Z^TZ)^{-1}\\X_Z-N_{UZ^T}(\RR)^TX_U\end{pmatrix},\end{array}
\end{equation}
	where $N_{UZ^T}(\RR)=(I-\Pi_\TT{UZ^T})(\RR-UZ^T)=(I-UU^T)\RR(I-Z(Z^TZ)^{-1}Z^T)$ is
the orthogonal projection of $\RR-R$ onto the normal space. The Newton direction $X$
is found by solving the linear system $\mathcal{H}J(X)=-\nabla J(R)$, that reduces to
\[ \left\{\begin{array}{r} X_UA +BX_Z  =E\\ B^TX_U+X_Z = F,\end{array}\right.\] with
	$A=(Z^TZ)$, $B=-N_{UZ^T}(\RR)$, $E=(I-UU^T)\RR Z$ and $F=-Z+\RR^TU$. This requires
	to solve the Sylvester equation $X_UA-BB^TX_U=E-BF$ for $X_U$, that can be  done in
	theory by using standard techniques \cite{kirrinnis2001fast}, before computing
	$X_Z$ from $X_Z=F-B^TX_U$.

	It is now proven that the distance function $J$ may admit several critical points,
	but a unique local, hence global, minimum  on $\M$. 	As a consequence, saddle
	points of $J$ are unstable equilibrium solutions of the gradient flow
	$\dot{R}=-\nabla J(R)$ and hence are expected to be avoided by gradient descent,
	which will converge in practice to the global minimum $\Pi_\M(\RR)$. 
	This ``almost surely'' convergence guarantee for the gradient
	descent may be compared to probabilistic analyses 
	investigated in more general contexts \cite{Pitaval2015,WeiCaiChanEtAl2016}.
  Our result also shows that one cannot expect 
	the Newton method to converge for initial guesses that are far from the optimal.
	Indeed, this method seeks a zero of the gradient $\nabla J$ rather
	than a true minimum, and hence may converge or oscillate around several of the
	saddle points of the objective function.
	\begin{proposition}
		\label{prop:localMinima}
		Consider $\RR\in\MlM$ such that its projection onto $\M$ is well defined, that is
		$\sigma_r(\RR)>\sigma_{r+1}(\RR)$. Then the distance function $J$ to $\RR$
		(eqn.\;\cref{eqn:J}) admits no other local minima than $\Pi_{\M}(\RR)$. In other
		words, for almost any initial rank $r$ matrix $U(0)Z(0)^T$, the solution
		$U(t)Z(t)^T$ of the gradient flow 
		\begin{equation}
			\left\{\begin{aligned}
 \dot{U} & =(I-UU^T)\RR Z(Z^TZ)^{1}\\
 \dot{Z} & =\RR^TU-Z
				\end{aligned}\right.
		\end{equation}
		converges to $\Pi_\M(\RR)$, the rank $r$ truncated SVD of $\RR$.
	\end{proposition}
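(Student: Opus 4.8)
The plan is to locate all critical points of $J$ on $\M$, compute the covariant Hessian there by \cref{prop:gradientHessian}, recognise it as the operator $I-L_R(N)$ that governs \cref{thm:distanceDiff}, read off its spectrum from \cref{thm:curvature}, and then deduce the statement about the gradient flow from standard facts on gradient flows of real-analytic functions.

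\emph{Critical points.} By \cref{eqn:covgradientJ}, a point $R=UZ^T\in\M$ is critical for $J$ if and only if $\Pi_{\TT R}(R-\RR)=0$, i.e.\ the residual $N:=\RR-R$ lies in the normal space $\NN R$. By \cref{prop:normalSpace} this forces the existence of a common singular basis in which $R=\sum_{i=1}^r\sigma_iu_iv_i^T$ and $\RR=R+N=\sum_{i=1}^{r+k}\sigma_iu_iv_i^T$; in other words, the critical points of $J$ are exactly the rank-$r$ matrices obtained by \emph{retaining} $r$ of the positive singular triples of $\RR$ and discarding the rest. Up to the rotational indeterminacy inside the singular subspaces of $\RR$ of dimension $>1$ (along which $J$ is constant), there are finitely many such critical points; the distinguished one is $\Pi_\M(\RR)$, which retains the $r$ largest singular values and, by \cref{def:projectionM}, is the unique global minimiser.

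\emph{Hessian and the strict-saddle property.} At a critical point $R$ the residual is already normal, so $N_{UZ^T}(\RR)=\RR-R=N$ and \cref{eqn:covhessianJ} collapses to $\mathcal{H}J(X_U,X_Z)=(X_U-NX_Z(Z^TZ)^{-1},\,X_Z-N^TX_U)=X-L_R(N)X$; that is, $\mathcal{H}J=I-L_R(N)$ is precisely the operator whose eigenvalues enter \cref{eqn:diffProjection}. By \cref{thm:curvature} its spectrum consists of the numbers $1\mp\sigma_{r+j}/\sigma_i$, taken over all retained $\sigma_i$ and discarded $\sigma_{r+j}$, together with eigenvalues equal to $1$. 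Hence, if some retained $\sigma_i$ is strictly smaller than some discarded $\sigma_{r+j}$, then $L_R(N)$ has the eigenvalue $\sigma_{r+j}/\sigma_i>1$, so $\mathcal{H}J$ has the strictly negative eigenvalue $1-\sigma_{r+j}/\sigma_i$ along the principal direction $\Phi_{i,r+j}^+$ of \cref{eqn:principalDirectionMatrix} (the infinitesimal deformation that swaps the small retained mode for the larger discarded one), and $R$ is not a local minimum. If instead every retained singular value is $\geq$ every discarded one, then the retained multiset realises the $r$ largest singular values of $\RR$, whence $J(R)=\tfrac12\sum_{j>r}\sigma_j(\RR)^2$ is the minimal value, and the hypothesis $\sigma_r(\RR)>\sigma_{r+1}(\RR)$ (uniqueness of the best rank-$r$ approximation) forces $R=\Pi_\M(\RR)$. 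Thus $\Pi_\M(\RR)$ is the only local minimum; moreover, using $\sigma_r>\sigma_{r+1}$ once more, all the ratios there are $<1$, so $\mathcal{H}J\succ0$ at $\Pi_\M(\RR)$ — a non-degenerate minimum — while every other critical point (or critical submanifold) carries a strictly negative normal Hessian eigenvalue, i.e.\ is a strict saddle.

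\emph{Gradient flow.} The system $\dot R=-\nabla J(R)$ is the gradient flow of the real-analytic function $J$ on $\M$; since $t\mapsto J(R(t))$ is non-increasing, the trajectory stays in a fixed sublevel set and is therefore bounded in $\MlM$. Granting that it also remains in a compact subset of $\M$, the \L ojasiewicz gradient inequality yields convergence to a single critical point, and the center--stable manifold theorem shows that the set of initial data converging to a given strict-saddle submanifold is Lebesgue-null; as there are only finitely many such submanifolds, almost every $U(0)Z(0)^T$ flows to the unique local minimum $\Pi_\M(\RR)$, the rank-$r$ truncated SVD. The main obstacle is exactly this compactness claim: $\M$ is not closed, and a priori the trajectory could leave every compact subset of $\M$ by approaching the rank-deficient boundary $\{\mathrm{rank}<r\}$, where the curvature diverges (\cref{thm:curvature}) and $\Pi_\M$ and $L_R$ degenerate; this must be ruled out, e.g.\ by showing $\sigma_r(R(t))$ cannot tend to $0$ — intuitively because $\sigma_r(\RR)>0$ and the component of $-\nabla J$ along $u_rv_r^T$ pushes the smallest retained singular value back up near the boundary. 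The only remaining point needing care is purely bookkeeping, namely tracking the rotational indeterminacy inside degenerate singular subspaces of $\RR$ so that ``finitely many critical submanifolds'' and ``strict saddle'' are phrased correctly; since $J(R)$ and the spectrum of $\mathcal{H}J$ depend only on which singular values are retained, this does not affect the argument.
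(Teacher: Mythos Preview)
Your argument for uniqueness of the local minimum is exactly the paper's: characterise critical points as $R=\sum_{i\in A}\sigma_i u_iv_i^T$ for a size-$r$ index set $A$, identify the covariant Hessian there with $I-L_R(N)$ via \cref{prop:gradientHessian}, and read off from \cref{thm:curvature} that its smallest eigenvalue $1-\sigma_1(N)/\sigma_r(R)$ is positive only when $A=\{1,\dots,r\}$. For the gradient-flow claim the paper is actually terser than you are --- it simply defers to a Morse-theory reference in a remark and does not address compactness in $\M$ at all --- so your explicit \L ojasiewicz/stable-manifold sketch and your honest flagging of the boundary issue ($\sigma_r(R(t))\not\to 0$) go beyond what the paper provides rather than falling short of it.
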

	\begin{proof}
		It is
		known from \cref{prop:normal} that the points $R$ for which $\nabla J$
		vanishes are such that  $\DD J=R-\RR\in\NN{R}$ is a normal vector.
		Since in addition $\DD^2 J=I$, \cref{prop:gradientHessian} yields  the identity
		\[\begin{aligned}
				\forall X\in\TT{R}, \quad \langle \mathcal{H} J(X),X \rangle & =
			\langle X, X\rangle -\langle\DD\Pi_\TT{R}(X)\cdot N,X\rangle \\
			& = 			||X||^2-\langle L_R(N)(X),X
\rangle, \end{aligned}\]  where $N=-(I-\Pi_\TT{R})(\DD J)=-\DD J=\RR-R\in \NN{R}$, since $\nabla J=\Pi_\TT{R}(\DD
J)$ vanishes at $R$.
  Let $\RR=\sum_{i=1}^{r+k} \sigma_i(\RR) u_iv_i^T$ be the SVD of $\RR$.
	For $\RR-R$ to be a normal vector, $R$ must necessary be of the form
	$R=\sum_{i\in A} \sigma_i u_iv_i^T$ where $A$ is a
		subset of $r$ indices $1\<i\<r+k$. Then the minimum
		eigenvalue of the Hessian $\mathcal{H}$ is $1-\frac{\sigma_1(N)}{\sigma_r(R)}$,
		which is positive if and only if $\sigma_r(R)>\sigma_1(N)$. This happens only for
		$R=\Pi_\M(\RR)$.
	\end{proof}
	\begin{remark}
		The reader is referred to \cite{jost2008riemannian} for details regarding the
		convergence almost surely of sufficiently smooth gradient flows towards the
		unique minimizer of a function (Morse theory).
	\end{remark}
	 On \cref{fig:optimization}, a matrix $\RR\in\MlM$ with
	$m=100$ and $l=150$ is considered, with singular values chosen to be equally spaced
	in the interval $[1,10]$. Three optimization algorithms detailed in
	\cite{edelman1998} (gradient descent with fixed step, conjugate gradient descent,
	and Newton method) are implemented to find the best rank $r=5$  approximation of
	$\RR$, with a random initialization. Convergence curves are plotted on
	\cref{fig:optimization}: linear and quadratic rates characteristic of respectively
	gradient and Newton methods are obtained. As expected from \cref{prop:localMinima},
	gradient descents globally converge to the truncated SVD, while Newton iterations
	may be attracted to any saddle point.
	\begin{figure} \centering
		\includegraphics[height=3.47cm]{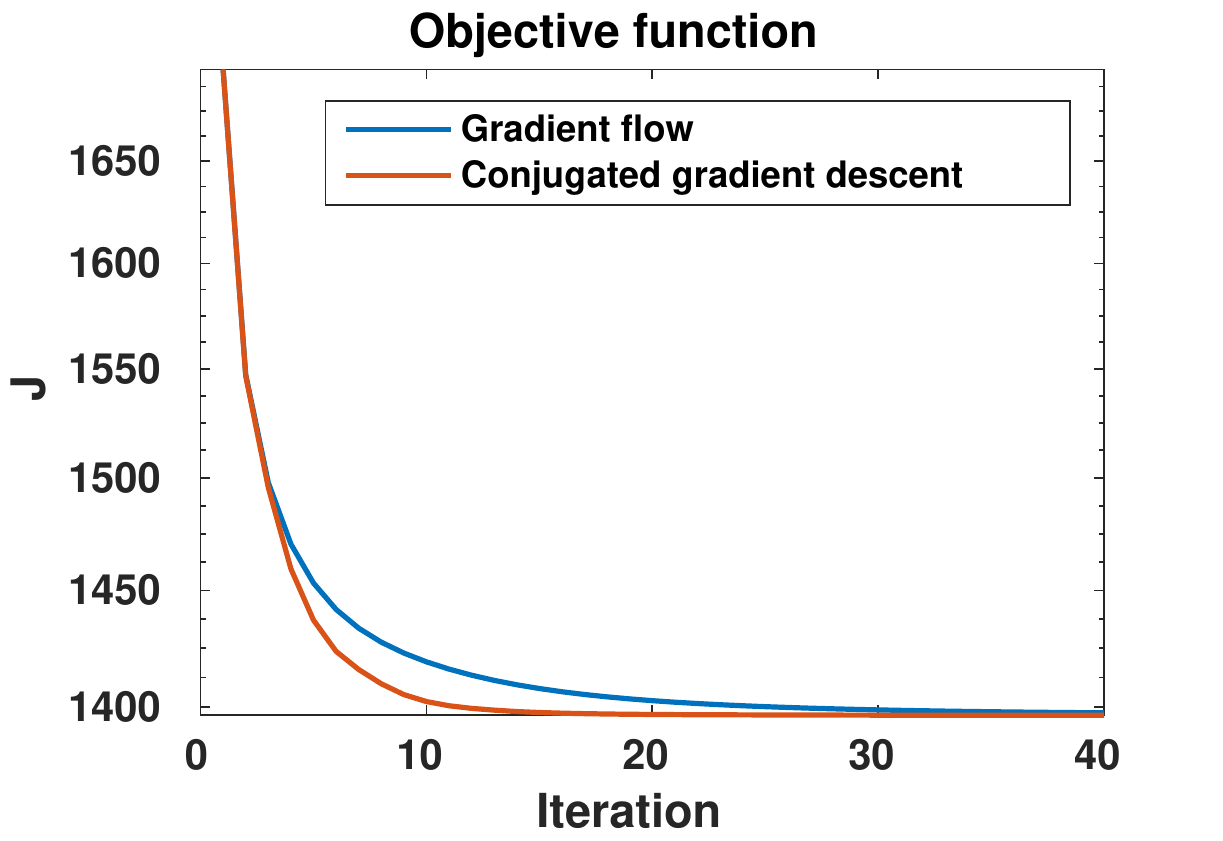}
		\includegraphics[height=3.47cm]{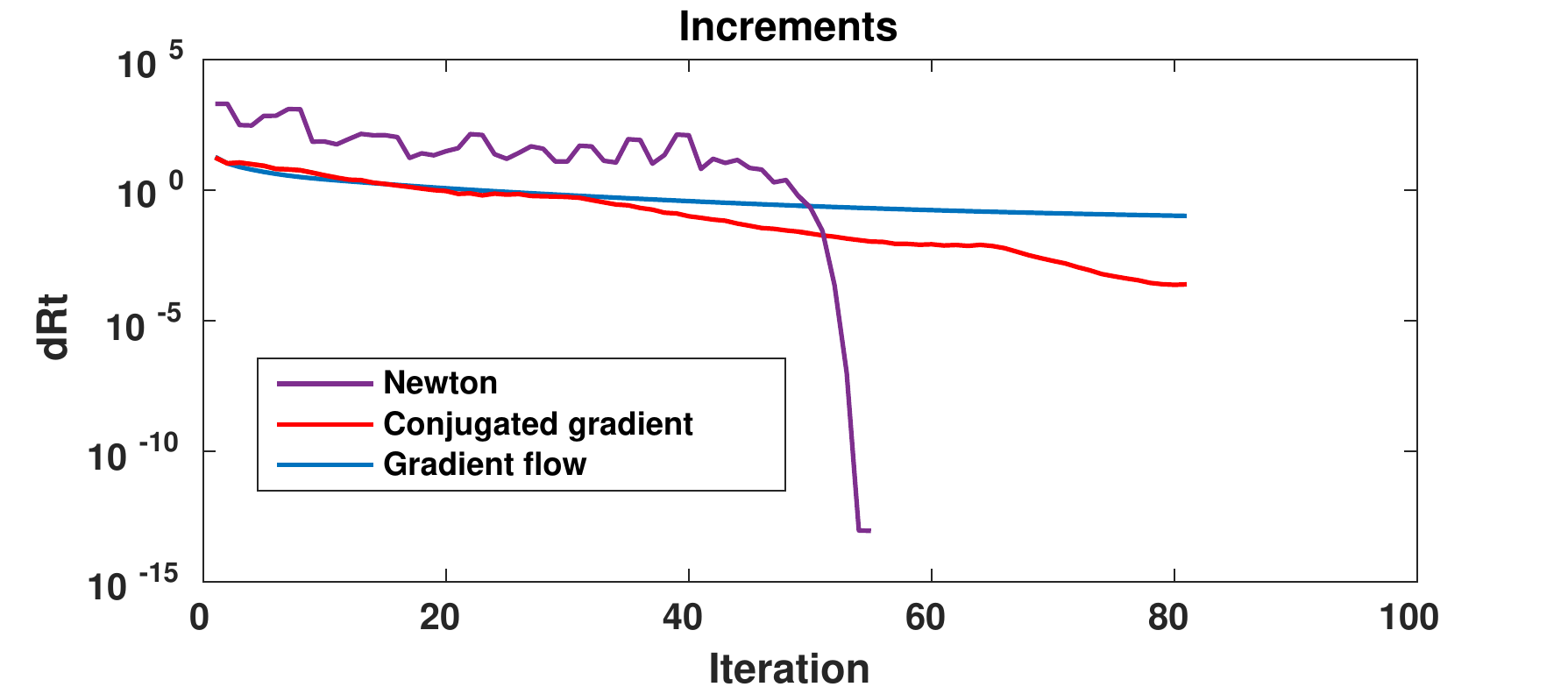}
		\caption{\small Convergence curves of optimization algorithms for minimizing the
	distance function $J$ (equation \cref{eqn:J}). Newton does not converge to the
	global minimum and hence is not represented on the left curve.}
    \label{fig:optimization} 
\end{figure}

\begin{remark} \label{remark:DO-optimization}
The above gradient descent and Newton methods can be combined with previously-derived
numerical schemes for the time-integrated DO eqs.\;\cref{eqn:fullSVDscheme}.  One
class of schemes consists of discretizing the ODEs \cref{eqn:DOsystem} in time, as in
\cite{sapsis2009dynamically,ueckermann_et_al_JCP2013,musharbash2015error,koch2007dynamical}.
Another follows \cref{eqn:fullSVDscheme} directly and aims to compute the SVD
truncation $\Pi_\M(\RR)$ of $\RR=UZ^T+\Delta t\,\overline{\LL}(t,UZ^T,\Delta t)$,
where the increment function can be that of Euler or of higher-order explicit time
marching (of course, the total rank of this $\RR$ depends on the dynamics and
numerical scheme, and can be greater than $r$). 
Examining the expression of the gradient of $J$ (eqn.\;\cref{eqn:covgradientJ}), one
time-step of the above schemes can be interpreted as one gradient descent step for
minimizing the functional $J$. Therefore, optimization algorithms on the Riemannian
manifold $\M$ can be combined with such DO time-stepping schemes, as further
investigated in \cite{Feppon2016a}. A key advantage of
such optimization is the capability of altering the rank $r$ of the dynamical
approximation over a time step or stage (e.g.\;a rank $p>r$ approximation can be used
in the target cost functional $J$). These strategies may also be utilized for the
computation of nonlinear singular vectors \cite{VaidyaNagarajothers2010} or for
continuous dominant subspace estimation \cite{lermusiaux2001evolving}. Finally, it
can also be combined with adaptive learning schemes
\cite{lermusiaux_DAO1999,lermusiaux_PhysD2007,sapsis2012dynamical} which use system
measurements and/or Monte-Carlo breeding nonlinear simulations to estimate the
missing fastest growing modes. Such additional information can then correct the
predictor of the SVD of $\RR(t+\Delta t)$ in directions orthogonal to the discrete DO
increments and essentially increase the subspace size, e.g.\;when the estimates of
$\sigma_{r+1}(\RR(t))$ become close to these of $\sigma_r(\RR(t))$.
\end{remark}

	\section{Conclusion}

	A geometric approach was developed for dynamical model-order reduction, through the
	analysis of the embedded geometry of the fixed rank manifold $\M$. The extrinsic
	curvatures of matrix manifolds were studied and geodesic equations obtained. The
	relationships among these notions and the differential of the orthogonal projection
	of the original system dynamics onto the tangent spaces of the manifold were
	derived and linked to the DO approximation. These geometric results allowed to
	derive the differential of the truncated SVD 
	interpreted as
	an 	orthogonal projection onto the fixed rank matrix manifold. The DO approximation, with its
	instantaneous application of the SVD truncation of the stochastic/parametric
	dynamics, was shown to be the natural dynamical reduced-order model that is optimal
	on small integration times among all other reduced-order models that evaluate the
	operator of the full-space dynamics exclusively onto low rank approximations.
	Additionally, the explicit dynamical system satisfied by the best low rank
	approximation was derived and used to sharpen the error analysis of the DO
	approximation.

 The DO method was related
	to Riemannian matrix optimization, for which gradient descent methods were applied
	and shown capable of adaptively tracking the best low rank approximation of dynamic
	matrices. This may prove beneficial in the integration of the time stepping of the
	DO approximation.  Such approaches, in
	contrast with classic numerical integrations of the governing differential
	equations for the DO modes and their coefficients, open new future avenues for efficient
	DO numerical schemes.
	In general, there are now many promising directions for
	developing new, efficient, dynamic reduced-order methods, based on the geometry and
	shape of the full-space dynamics. Opportunities abound over a wide range of needs
	and applications of uncertainty quantification and dynamical system analyses and
	optimization in oceanic and atmospheric sciences, thermal-fluid sciences and
	engineering, electrical engineering, and chemical and biological sciences and
	engineering.

	\section*{Acknowledgments} We thank the members of the MSEAS group at MIT as well
	as Camille Gillot, Christophe Zhang, and Saviz Mowlavi for insightful discussions
	related to this topic. We are grateful to the Office of Naval Research for support
	under grants N00014-14-1-0725
	(Bays-DA) and N00014-14-1-0476 (Science of Autonomy -- LEARNS) to the Massachusetts Institute of Technology.
	\appendix

	\section{Proof of \cref{thm:distanceDiff}}
	\label{app:proofDiff}
	\begin{lemma}
		\label{lem:continuityProj}
		Let $\Omega$ be an open set over which the projection $\Pi_\M$ is uniquely
		defined by eqn.\;\cref{eqn:CondunicityProj}
		and such that condition \cref{eqn:Condfrontier} holds.
		Then  $\Pi_\M$ is continuous on $\Omega$.
	\end{lemma}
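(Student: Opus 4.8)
The plan is to prove continuity sequentially, using the finite dimensionality of $E$ together with the Lipschitz continuity of the distance-to-$\M$ function. First I would record the elementary fact that $d_\M:E\to\R$, $d_\M(\RR):=\inf_{R\in\M}\|\RR-R\|$, is $1$-Lipschitz: for any $\RR,\RR'\in E$ and any $R\in\M$ one has $\|\RR-R\|\<\|\RR-\RR'\|+\|\RR'-R\|$, and taking the infimum over $R\in\M$ on both sides, then exchanging the roles of $\RR$ and $\RR'$, gives $|d_\M(\RR)-d_\M(\RR')|\<\|\RR-\RR'\|$; in particular $d_\M$ is continuous, and $d_\M(\RR)=\|\RR-\Pi_\M(\RR)\|$ for every $\RR\in\Omega$ by definition of the projection. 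Next I fix $\RR\in\Omega$ and a sequence $\RR_n\to\RR$ in $\Omega$, and set $R_n:=\Pi_\M(\RR_n)$ and $R:=\Pi_\M(\RR)$. Since $R$ is an admissible competitor in the minimization defining $R_n$, $\|R_n-\RR_n\|=d_\M(\RR_n)\<\|R-\RR_n\|\<\|R-\RR\|+\|\RR-\RR_n\|$, and the right-hand side is bounded because $\RR_n\to\RR$; hence $(R_n)$ is a bounded sequence in $E$. Because $E$ is finite dimensional it then suffices to show that every convergent subsequence of $(R_n)$ has limit $R$, as a bounded sequence all of whose convergent subsequences share the common limit $R$ must itself converge to $R$.

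So let $R_{n_k}\to R_*$ be any convergent subsequence. Then $R_*\in\overline{\M}$, since every $R_{n_k}\in\M$. Passing to the limit in the identity $\|R_{n_k}-\RR_{n_k}\|=d_\M(\RR_{n_k})$, using continuity of the norm and of $d_\M$, I obtain $\|R_*-\RR\|=d_\M(\RR)=\|\Pi_\M(\RR)-\RR\|$. At this point hypothesis \cref{eqn:Condfrontier} enters: it states precisely that no point of $\partial\M=\overline{\M}\setminus\M$ can realize the distance $d_\M(\RR)$, so $R_*$ cannot lie on $\partial\M$; therefore $R_*\in\M$. But then $R_*$ is a point of $\M$ realizing the minimal distance from $\RR$ to $\M$, so by the uniqueness hypothesis \cref{eqn:CondunicityProj} it must coincide with the projection, $R_*=\Pi_\M(\RR)=R$. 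Thus every convergent subsequence of $(R_n)$ converges to $R$, whence $R_n\to R$; since $\RR\in\Omega$ was arbitrary, $\Pi_\M$ is continuous on $\Omega$.

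The only genuinely delicate point is the invocation of \cref{eqn:Condfrontier}: without it, the limit $R_*$ of a convergent subsequence of projections could a priori escape to the boundary $\partial\M$ — a set of strictly lower-rank matrices where $\Pi_\M$ need not even be defined — and the argument would collapse. This is exactly the phenomenon that makes the non-closedness of the fixed-rank manifold relevant here, and \cref{eqn:Condfrontier} is the minimal assumption that rules it out. All remaining steps are a routine combination of compactness in finite dimensions with the continuity of $d_\M$.
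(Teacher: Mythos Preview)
Your proof is correct and follows essentially the same approach as the paper's: both argue sequentially by showing that the sequence of projections is bounded, that any subsequential limit $R_*\in\overline{\M}$ realizes the minimal distance $\|\RR-R_*\|=d_\M(\RR)$, and then invoke condition~\cref{eqn:Condfrontier} to rule out $R_*\in\partial\M$ and uniqueness~\cref{eqn:CondunicityProj} to conclude $R_*=\Pi_\M(\RR)$. Your version is slightly more explicit in recording the $1$-Lipschitz continuity of $d_\M$ and in noting where finite dimensionality enters, but the structure is the same.
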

	\begin{proof}
		Consider a sequence $\RR_n$ converging in $E$ to $\RR$ and denote $\Pi_\M(\RR_n)$
		the corresponding projections. Let $\epsilon>0$ be a real such that $\forall
		n\>0, ||\RR_n-\RR||<\epsilon$. Since \[\begin{aligned}||\Pi_\M(\RR_n)-\RR|| &
				\<||\Pi_\M(\RR_n)-\RR_n||+||\RR_n-\RR|| \\ &
		\<||\RR_n-\Pi_\M(\RR)||+||\RR_n-\RR||\\ & \<
2\epsilon+||\RR-\Pi_\M(\RR)||,\end{aligned}\] the sequence $\Pi_\M(\RR_n)$ is
bounded. Denote $R\in\overline{\M}$ a limit point of this sequence. Passing to the
limit the inequality $||\RR_n-\Pi_\M(\RR_n)||\<||\RR_n-\Pi_\M(\RR)||$, one obtains
$||\RR-R||\<||\RR-\Pi_\M(\RR)||$. The uniqueness of the projection, and the fact that
there is no $R\in\overline{\M}\backslash \M$ satisfying this inequality, shows that
$R=\Pi_\M(\RR)$. Since the bounded sequence $(\Pi_\M(\RR_n))$ has a unique limit
point, one deduces the convergence $\Pi_\M(\RR_n)\rightarrow \Pi_\M(\RR)$ and hence
the continuity of the projection map at $\RR$.
	\end{proof}
	\begin{lemma}
		\label{lem:kappaicond}
		At any point $\RR\in\Omega$, any principal curvature $\kappa_i(N)$ in the direction $N$ at $\Pi_\M(\RR)$ must satisfy $\kappa_i(N)<1$.
	\end{lemma}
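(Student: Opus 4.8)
The plan is to prove the inequality in two steps: first the non-strict bound $\kappa_i(N)\leq 1$, which is second-order optimality, and then the strict improvement, which uses that $\Omega$ is \emph{open}. Fix $\RR\in\Omega$, set $R=\Pi_\M(\RR)$ and $N=\RR-R$, which is normal to $\M$ at $R$ by \cref{prop:normal}. For a unit tangent vector $X\in\TT{R}$ let $R(t)$ be a geodesic of $\M$ with $R(0)=R$ and $\dot R(0)=X$, and put $g(t)=\tfrac12\|\RR-R(t)\|^2$. Then $g'(0)=-\langle N,X\rangle=0$ since $N$ is normal, and $g''(0)=\|X\|^2-\langle N,\ddot R(0)\rangle$. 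By \cref{prop:connection} and the geodesic condition $\nabla_{\dot R}\dot R=0$ the acceleration is purely normal, $\ddot R(0)=-\Gamma(X,X)$, so by the definition of the second fundamental form (\cref{def:weingartenMap}) and the Weingarten identity \cref{eqn:weingartenIdentity} one has $\langle N,\ddot R(0)\rangle=-\langle N,\Gamma(X,X)\rangle=\RN{2}(N)(X,X)=\langle X,L_R(N)X\rangle$. Hence $g''(0)=\langle X,(I-L_R(N))X\rangle$. Since $R(t)\in\M$ and $R=\Pi_\M(\RR)$ minimises the distance to $\M$, the scalar function $g$ has a minimum at $t=0$, so $g''(0)\geq 0$ for every $X$; thus $I-L_R(N)$ is positive semi-definite on $\TT{R}$ and every principal curvature satisfies $\kappa_i(N)\leq 1$.

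For the strict inequality, suppose toward a contradiction that $\kappa_{i_0}(N)=1$ for some unit principal direction $\Phi_{i_0}$ (the case $N=0$ gives $L_R(0)=0$ and is trivial). The strategy is to exhibit points arbitrarily close to $\RR$ whose nearest-point set in $\overline{\M}$ is not a singleton, contradicting that $\Omega$ is open and that conditions \cref{eqn:CondunicityProj} and \cref{eqn:Condfrontier} hold on $\Omega$. Consider the normal ray $\RR_s=R+sN$ with $s>1$. Running the computation of the first paragraph at $\RR_s$ in the direction $\Phi_{i_0}$ gives second derivative $1-s\kappa_{i_0}(N)=1-s<0$, so $R$ is a strict local \emph{maximum} of $R'\mapsto\|\RR_s-R'\|$ along that geodesic; in particular $R$ is not a nearest point of $\overline{\M}$ to $\RR_s$ and $\inf_{R'\in\overline{\M}}\|\RR_s-R'\|<\|\RR_s-R\|$ for every $s>1$.

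To convert this into genuine non-uniqueness I would use a graph chart of $\M$ over $\TT{R}$ near $R$, writing points of $\M$ as $\psi(X)=R+X+\phi(X)$ with $\phi:\TT{R}\to\NN{R}$, $\phi(0)=0$, $\DD\phi(0)=0$, and $\langle\phi(X),N\rangle=\tfrac12\sum_i\kappa_i(N)\langle X,\Phi_i\rangle^2+O(\|X\|^3)$, which encodes the second fundamental form \cref{eqn:secondFundamentalFormAbstract}. Expanding $\|\RR_s-\psi(X)\|^2=s^2\|N\|^2+\sum_i\bigl(1-s\kappa_i(N)\bigr)\langle X,\Phi_i\rangle^2+O(\|X\|^3)$ shows that for $s=1+\epsilon$ with $\epsilon>0$ small the coefficient of $\langle X,\Phi_{i_0}\rangle^2$ is the negative number $-\epsilon$ while the other principal directions remain strictly convex, so along $X=t\Phi_{i_0}$ the squared distance has a local maximum at $t=0$ flanked by two local minima $t_\pm(\epsilon)\to 0$. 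Perturbing $\RR_s$ tangentially by a small amount $a\Phi_{i_0}$ and varying $a$, an intermediate-value argument yields some $a^\ast(s)\to 0$ for which the two competing minima are equidistant from $\RR_s+a^\ast(s)\Phi_{i_0}$; localising all nearest points near $R$ by upper semicontinuity of the nearest-point set together with continuity of $\Pi_\M$ (\cref{lem:continuityProj}), and using \cref{eqn:Condfrontier} to prevent the boundary from competing, these two points realise $\mathrm{dist}(\,\cdot\,,\overline{\M})$. Thus $R+sN+a^\ast(s)\Phi_{i_0}\to\RR\in\Omega$ while having two distinct nearest points, contradicting that $\Omega$ is open with a single-valued projection, and the claim $\kappa_i(N)<1$ follows. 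The delicate step, and the main obstacle, is precisely this last construction: controlling the two competing minima uniformly in $\epsilon$, checking that the odd higher-order terms of the expansion do not spoil the intermediate-value argument, and guaranteeing that the resulting tie occurs at an interior (rather than boundary) minimiser; this is exactly where the classical analyses of the differentiability of the distance function, e.g.\;\cite{Ambrosio2000}, carry the weight.
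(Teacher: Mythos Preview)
Your first paragraph is correct and matches the paper: both compute the second variation of $t\mapsto\tfrac12\|\RR-R(t)\|^2$ along geodesics, identify it with $\langle X,(I-L_R(N))X\rangle$, and conclude $\kappa_i(N)\le 1$ from second-order optimality at the minimiser $R=\Pi_\M(\RR)$.

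For the strict inequality you take a much harder road than necessary, and you yourself flag that the cusp/bifurcation construction is not closed: controlling the two competing local minima uniformly in $\epsilon$, ensuring they are \emph{global} minimisers over $\overline{\M}$, and making the intermediate-value step rigorous when the minimisers themselves move with $a$ are all genuine obligations you defer to the literature. The paper bypasses all of this with a two-line trick that exploits something you already wrote down but did not use: the \emph{linearity} of $N\mapsto L_R(N)$. Since $\Omega$ is open and $\RR=R+N\in\Omega$, there exists $s>1$ with $R+sN\in\Omega$. One checks that the projection is invariant along the normal ray, $\Pi_\M(R+sN)=R$ (if some $R'\in\overline{\M}$ were strictly closer to $R+sN$ than $R$, a short affine-in-$t$ comparison of $\|R+tN-R'\|^2-\|R+tN-R\|^2$ between $t=1$ and $t=s$ contradicts unique projection at some intermediate point of $\Omega$). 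Now apply your own non-strict bound at the point $R+sN$, whose normal residual is $sN$: every eigenvalue of $L_R(sN)=s\,L_R(N)$ is $\le 1$, hence $\kappa_i(N)\le 1/s<1$.

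So the paper's argument replaces your delicate construction of nearby skeleton points by a single application of the already-proved inequality $\kappa_i\le 1$ at a slightly farther point on the same normal ray, together with homogeneity of the Weingarten map. Your strategy of showing $\RR$ lies in $\overline{\Sk(\M)}$ is the right geometric picture (a point with $\kappa_i(N)=1$ is a focal point), but it is the hard direction of that equivalence; the paper's trick avoids it entirely.
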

	\begin{proof}
		It is shown in \cref{prop:gradientHessian}	that the covariant Hessian of the distance function $J(R)=\frac{1}{2}||\RR-J||^2$ at $R=\Pi_\M(\RR)$ is given by
		\begin{equation} \label{eqn:HessianJ}\begin{array}{cccccc}\mathcal{H}J &: & \TT{R} & \rightarrow & \TT{R} \\
				& 	& X & \mapsto & X-L_R(N)(X),\end{array}\end{equation}
		where $N$ is the normal direction $N=\RR-\Pi_\M(\RR)$.
		Since $R=\Pi_\M(\RR)$ must be a local minimum of $J$, this Hessian must be positive, namely any eigenvalue $\kappa_i(N)$ of the Weingarten map $L_R(N)$ must satisfy $1-\kappa_i(N)\>0$. Now, consider $s>1$ such that $R+sN\in\Omega$ and notice  that $||R+sN-\Pi_\M(\RR)||=s||N||$.
		Since \[||R+sN-\Pi_\M(R+sN)||\<||R+sN-\Pi_\M(\RR)||=s||N||,\]
		the uniqueness of the projection in $\Omega$ implies that $\Pi_\M(R+sN)=R$ (\emph{i.e.}\;the projection is invariant along orthogonal rays). The linearity of the Weingarten map in $N$ implies $\kappa_i(sN)=s\kappa_i(N)$,  hence $\kappa_i(N)\<\frac{1}{s}<1$, which concludes the proof.
	\end{proof}
	\begin{proof}[Proof of \cref{thm:distanceDiff}]
		Consider the function $f(\RR,R)=\Pi_\TT{R}(R-\RR)$ defined on $\M\times E$. The
		differential of $f$ with respect to the variable $R$ in a direction $X\in \TT{R}$
		at $R=\Pi_\M(\RR)$ is the application \[ X\mapsto \Pi_\TT{R}X-\DD_X
		\Pi_\TT{R}(\RR-R)=(I-L_R(N))(X).\] \cref{lem:kappaicond} implies that the
		Jacobian $\partial_{R,X} f$ has no zero eigenvalue and hence is invertible. The
		implicit function theorem ensures the existence of a diffeomorphism $\phi$
		mapping an  open neighborhood $\Omega_E\subset E$ of $\RR$ to an open
		neighborhood $\Omega_\M\subset \M$ of $R$, such that for any $\RR'\in\Omega_E$,
		$\phi(\RR')$ is the unique element of $\Omega_\M$ satisfying
		$f(\RR',\phi(\RR'))=0$. By continuity of the projection
		(\cref{lem:continuityProj}),  one can assume, by replacing $\Omega_E$ with the
		open subset $\Omega_E\cap \Pi_\M^{-1}(\Omega_\M)$, that $\Pi_\M(\Omega_E)\subset
		\Omega_\M$. Then, the equality $f(\RR',\Pi_\M(\RR'))=0$ implies by uniqueness:
		$\phi(\RR')=\Pi_\M(\RR')$. Hence $\Pi_\M=\phi$ on $\Omega_E$, and, in particular,
		$\Pi_\M$ is differentiable. Finally, for a given $X\in E$, one can now solve
		\cref{eqn:SVDdifferential1}
		by projection onto the eigenvectors of $L_R(N)$ and obtain
		\cref{eqn:diffProjection}.\,
	\end{proof}

	\section{Proof of \cref{thm:DOError}}
	\label{app:proofError}
	\begin{lemma}
		\label{corol:boundDXPiM}
		For any $\RR\in\MlM$ satisfying $\sigma_r(\RR)>\sigma_{r+1}(\RR)$ and
		$\XX\in\MlM$~: \[
		||\DD_\XX\Pi_\M(\RR)-\Pi_\TT{\Pi_\M(\RR)}\XX||\<\frac{\sigma_{r+1}(\RR)}{\sigma_{r+1}(\RR)-\sigma_r(\RR)}||\XX||.\]
	\end{lemma}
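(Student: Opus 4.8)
The plan is to obtain the estimate straight from the explicit formula \cref{eqn:diffSVD} of \cref{thm:diffProjection}, which is available because the hypothesis $\sigma_r(\RR)>\sigma_{r+1}(\RR)$ is exactly its differentiability condition. Subtracting $\Pi_{\TT{\Pi_\M(\RR)}}\XX$ from \cref{eqn:diffSVD} leaves only the curvature correction
\[
\DD_\XX\Pi_\M(\RR)-\Pi_{\TT{\Pi_\M(\RR)}}\XX=\sum_{\substack{1\<i\<r\\1\<j\<k}}\left[\frac{\sigma_{r+j}}{\sigma_i-\sigma_{r+j}}\langle\XX,\Phi_{i,r+j}^+\rangle\Phi_{i,r+j}^+-\frac{\sigma_{r+j}}{\sigma_i+\sigma_{r+j}}\langle\XX,\Phi_{i,r+j}^-\rangle\Phi_{i,r+j}^-\right],
\]
a sum over the principal directions \cref{eqn:principalDirectionMatrix}, with coefficients of the form $\kappa/(1-\kappa)$ evaluated at the principal curvatures $\kappa=\pm\sigma_{r+j}/\sigma_i$ of \cref{thm:curvature} in the normal direction $N=\RR-\Pi_\M(\RR)$. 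The first thing I would record is that the family $\{\Phi_{i,r+j}^\pm\}_{1\<i\<r,\,1\<j\<k}$ is orthonormal for the Frobenius product: this follows at once from $\Phi_{i,r+j}^\pm=\frac{1}{\sqrt{2}}(u_{r+j}v_i^T\pm u_iv_{r+j}^T)$ and $\langle u_av_b^T,u_cv_d^T\rangle=\delta_{ac}\delta_{bd}$ for the orthonormal singular vectors $(u_i),(v_i)$, the inequalities $i,i'\<r<r+j,r+j'$ killing all cross terms.

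Given this, the Pythagorean identity turns the squared Frobenius norm of the difference above into a sum over $i,j$ of the squared coefficients $\big(\sigma_{r+j}/(\sigma_i\mp\sigma_{r+j})\big)^2$ weighted by $\langle\XX,\Phi_{i,r+j}^\pm\rangle^2$. I would then bound each coefficient uniformly: since the singular values decrease, for $1\<i\<r$ and $1\<j\<k$ one has $0\le\sigma_{r+j}\le\sigma_{r+1}$ and $\sigma_i-\sigma_{r+j}\ge\sigma_r-\sigma_{r+1}>0$, so the elementary monotonicity of $(a,b)\mapsto b/(a-b)$ (nondecreasing in $b$, nonincreasing in $a$ on this range) gives $0\le\frac{\sigma_{r+j}}{\sigma_i-\sigma_{r+j}}\le\frac{\sigma_{r+1}}{\sigma_r-\sigma_{r+1}}$, while the $\Phi^-$ coefficients are no larger because $\sigma_i+\sigma_{r+j}\ge\sigma_i-\sigma_{r+j}$. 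Plugging these bounds in and then applying Bessel's inequality $\sum_{i,j}\big(\langle\XX,\Phi_{i,r+j}^+\rangle^2+\langle\XX,\Phi_{i,r+j}^-\rangle^2\big)\le\|\XX\|^2$ for the orthonormal family yields $\|\DD_\XX\Pi_\M(\RR)-\Pi_{\TT{\Pi_\M(\RR)}}\XX\|\le\frac{\sigma_{r+1}(\RR)}{\sigma_r(\RR)-\sigma_{r+1}(\RR)}\|\XX\|$, the asserted bound (with the positive constant $\sigma_{r+1}/(\sigma_r-\sigma_{r+1})$).

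I do not expect a genuine obstacle here: the argument amounts to reading \cref{eqn:diffSVD}, an orthonormality computation, and an elementary estimate. The only point deserving a little care is pinpointing the worst coefficient. It is the $\Phi^+$ one, $\sigma_{r+j}/(\sigma_i-\sigma_{r+j})$, precisely the quantity that diverges as the spectral gap $\sigma_r-\sigma_{r+1}$ closes; one must verify that its supremum over the admissible region $\sigma_i\ge\sigma_r$, $0\le\sigma_{r+j}\le\sigma_{r+1}$ is attained simultaneously at $\sigma_i=\sigma_r$ and $\sigma_{r+j}=\sigma_{r+1}$, and, separately, that the $\Phi^-$ coefficients $\sigma_{r+j}/(\sigma_i+\sigma_{r+j})$ never exceed it.
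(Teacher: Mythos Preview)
Your argument is correct and is exactly the paper's approach: the paper's one-line proof simply observes that the maximum of $|\kappa/(1-\kappa)|$ over the principal curvatures $\kappa=\pm\sigma_{r+j}/\sigma_i$ in the eigendecomposition \cref{eqn:diffProjection} equals $\sigma_{r+1}/(\sigma_r-\sigma_{r+1})$, which is precisely what you unpack in detail via orthonormality of the $\Phi_{i,r+j}^\pm$ and Bessel's inequality. You have also silently corrected the sign typo in the denominator of the statement (it should indeed read $\sigma_r(\RR)-\sigma_{r+1}(\RR)$ to be positive).
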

	\begin{proof}
		This is a consequence of the fact that the maximum eigenvalue in the
		decomposition \cref{eqn:diffProjection} is \[
		\max_{i,j}\frac{\frac{\sigma_{r+j}(\RR)}{\sigma_i(\RR)}}{1-\frac{\sigma_{r+j}(\RR)}{\sigma_i(\RR)}}=\frac{\sigma_{r+1}(\RR)}{\sigma_r(\RR)-\sigma_{r+1}(\RR)}.\]
	\end{proof} 
	The following lemma can be found in \cite{WeiCaiChanEtAl2016} and
	Theorem 2.6.1 in \cite{Golub2012}.  
	\begin{lemma} \label{lemma:curvatureLemma2} For
		any points $R^1, R^2\in\M$ the following estimate holds:
		\begin{equation}
			\label{eqn:curvatureBound}
		||\Pi_\TT{R^1}-\Pi_\TT{R^2}||\<\min\left(1,\frac{2}{\sigma_r(R^1)}||R^1-R^2||\right
),\end{equation}
	where the norm of the left-hand side is the operator norm.\end{lemma}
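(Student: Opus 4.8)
The plan is to bound the two arguments of the minimum in \cref{eqn:curvatureBound} separately. The inequality $||\Pi_\TT{R^1}-\Pi_\TT{R^2}||\leq 1$ uses only that $\Pi_\TT{R^1}$ and $\Pi_\TT{R^2}$ are orthogonal projectors of the Euclidean space $\MlM$: for any two such projectors $P,Q$ one has the algebraic identity $(P-Q)^2+(P+Q-I)^2=I$ with both summands self-adjoint and positive semidefinite, whence $(P-Q)^2\leq I$ and, $P-Q$ being self-adjoint, $||P-Q||^2=||(P-Q)^2||\leq 1$.

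For the quantitative estimate I would start from the expression of the complementary projector recorded just before \cref{eqn:normalSpaceDef}: writing $R^i=U_iZ_i^T$ and setting $Q_i=I-U_iU_i^T$ and $S_i=I-Z_i(Z_i^TZ_i)^{-1}Z_i^T$ (the orthogonal projectors onto the orthogonal complements of the column space, respectively the row space, of $R^i$), one has $(I-\Pi_\TT{R^i})\XX=Q_i\XX S_i$ for every $\XX\in\MlM$. Telescoping,
\[
(\Pi_\TT{R^1}-\Pi_\TT{R^2})\XX=Q_2\XX S_2-Q_1\XX S_1=(Q_2-Q_1)\XX S_2+Q_1\XX(S_2-S_1),
\]
the submultiplicativity $||A\XX B||\leq ||A||_2\,||\XX||\,||B||_2$ of the Frobenius norm (with $||\cdot||_2$ the spectral norm) together with $||Q_i||_2,||S_i||_2\leq 1$ reduces the claim to the subspace perturbation bound $||U_1U_1^T-U_2U_2^T||_2+||Z_1(Z_1^TZ_1)^{-1}Z_1^T-Z_2(Z_2^TZ_2)^{-1}Z_2^T||_2\leq 2\,||R^1-R^2||/\sigma_r(R^1)$.

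The key step is the column-space estimate $||U_1U_1^T-U_2U_2^T||_2\leq ||R^1-R^2||/\sigma_r(R^1)$. Put $E=R^1-R^2$; since $\mathrm{col}(R^2)=\mathrm{col}(U_2)$ one has $(I-U_2U_2^T)R^2=0$, hence $(I-U_2U_2^T)R^1=(I-U_2U_2^T)E$. Multiplying on the right by $Z_1(Z_1^TZ_1)^{-1}$ and using $R^1=U_1Z_1^T$ gives $(I-U_2U_2^T)U_1=(I-U_2U_2^T)E\,Z_1(Z_1^TZ_1)^{-1}$, so that $||(I-U_2U_2^T)U_1||_2\leq ||E||_2\,||Z_1(Z_1^TZ_1)^{-1}||_2=||E||_2/\sigma_r(R^1)$, the last equality because the nonzero singular values of $Z_1$ are exactly those of $R^1=U_1Z_1^T$. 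Since the two column spaces have equal dimension $r$, the standard identity $||P_V-P_W||_2=||(I-P_V)P_W||_2$ for orthogonal projectors onto subspaces of the same dimension (see e.g.\;\cite{Golub2012}) yields $||U_1U_1^T-U_2U_2^T||_2=||(I-U_2U_2^T)U_1||_2$, which together with $||E||_2\leq ||E||=||R^1-R^2||$ settles the column-space term. Applying the identical argument to the transposes $(R^1)^T,(R^2)^T$, whose column spaces are the row spaces of $R^1,R^2$ and whose smallest nonzero singular value is again $\sigma_r(R^1)$, handles the $Z_i$ term. Summing the two estimates yields $||\Pi_\TT{R^1}-\Pi_\TT{R^2}||\leq 2\,||R^1-R^2||/\sigma_r(R^1)$, and combining with the first bound proves \cref{eqn:curvatureBound}.

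The point requiring care is to land the constant exactly $2$ and the denominator $\sigma_r(R^1)$ rather than $\sigma_r(R^2)$: the telescoping and the column-space estimate must be organized so that the complementary projector $I-U_2U_2^T$ of the \emph{perturbed} subspace is the one applied to the \emph{original} matrix $R^1$, whose $r$-th singular value we are entitled to use; and one must invoke the \emph{equality} $||P_V-P_W||_2=||(I-P_V)P_W||_2$ valid for equidimensional subspaces, since the elementary inequality between these two quantities points the wrong way. When $||R^1-R^2||$ is large the quantitative bound can exceed $1$, but that regime is already covered by the first estimate, so the minimum is automatically respected.
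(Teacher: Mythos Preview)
Your argument is correct. The paper does not actually prove this lemma: it simply records the statement and defers to \cite{WeiCaiChanEtAl2016} and Theorem~2.6.1 of \cite{Golub2012}, noting in the subsequent remark that this global bound improves the local ``curvature estimate'' of \cite{koch2007dynamical}. What you have written is precisely the Wedin--type $\sin\Theta$ perturbation argument underlying those references: use the explicit form of $I-\Pi_\TT{R}$ as a two-sided projector onto the orthogonal complements of the column and row spaces, telescope, and bound each piece by the standard identity $\|P_V-P_W\|_2=\|(I-P_V)P_W\|_2$ for projectors onto equidimensional subspaces, together with the pseudoinverse bound $\|Z_1(Z_1^TZ_1)^{-1}\|_2=1/\sigma_r(R^1)$. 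The trivial bound $1$ via $(P-Q)^2+(P+Q-I)^2=I$ is the usual way to handle the large-perturbation regime. So your proof is a faithful, self-contained reconstruction of what the cited sources provide, rather than a genuinely different approach; the only thing worth flagging is that the paper chose to outsource it, while you did not.
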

	\begin{remark}
		This result from \cite{WeiCaiChanEtAl2016} 
		enhances the ``curvature estimates'' of Lemma 4.2. of
		\cite{koch2007dynamical} that allows to 
		have a global bound and hence avoids the smallness assumption of the initial
		truncation error. Note that such a bound always exists at every points of smooth
		manifolds (Definition 2.17 of \cite{FepponThesis}). A purely geometric analysis 
		(Lemma 3.1. in \cite{FepponThesis}) may also be used to yield
		locally a sharper bound than \cref{eqn:curvatureBound} but with a larger constant
		$5/2$ instead of $2$ as a global estimate.
	\end{remark}
	\begin{proof}[Proof of \cref{thm:DOError}]
		Denote $R^*(t)=\Pi_\M(\RR(t))$.		Since
		$\dot{R}^*(t)=\DD_{\dot{\RR}}\Pi_\M(\RR(t))$, bounding
		\cref{eqn:SVDdifferential1} and using \cref{eqn:dRstar} and \cref{corol:boundDXPiM} yields: \[
		||\dot{R}-\dot{R}^*||\<||\Pi_\TT{R^*}(\mathcal{L}(t,\RR))-\Pi_\TT{R}(\mathcal{L}(t,R))||+\frac{\sigma_{r+1}(\RR)}{\sigma_r(\RR)-\sigma_{r+1}(\RR)}||\LL(t,\RR)||.\]
		Furthermore, by triangle inequality, \[ \begin{aligned}
				||\Pi_\TT{R^*}(\mathcal{L}(t,\RR))-\Pi_\TT{R}(\mathcal{L}(t,R))||   & \<
				||\Pi_\TT{R^*}(\LL(t,\RR))-\Pi_\TT{R}(\LL(t,\RR))||\\
				&+||\Pi_\TT{R}(\LL(t,\RR))-\Pi_\TT{R}(\LL(t,R^*))|| \\
		&+||\Pi_\TT{R}(\LL(t,R^*))-\Pi_\TT{R}(\LL(t,R))||.\end{aligned}\] The  \cref{lemma:curvatureLemma2} (first eqn.) and Lipschitz
		continuity of $\LL$ (last two eqs.) then imply
		\begin{gather*}
			||\Pi_\TT{R^*}(\LL(t,\RR))-\Pi_\TT{R}(\LL(t,\RR))||\<
			\frac{2}{\sigma_r(R^*)}||R-R^*||\,||\LL(t,\RR)||, \\
			||\Pi_\TT{R}(\LL(t,\RR))-\Pi_\TT{R}(\LL(t,R^*))||\< K||\RR-R^*||,\\
			||\Pi_\TT{R}(\LL(t,R^*))-\Pi_\TT{R}(\LL(t,R))||\<K||R-R^*||.
		\end{gather*}
		Finally, the following inequality is derived, combining all above equations together:
		\begin{multline}||\dot{R}-\dot{R}^*||\\ \<
		\left(K+\frac{2||\LL(t,\RR)||}{\sigma_r(R^*)}\right)||R-R^*||+\left(K+\frac{||\LL(t,\RR)||}{\sigma_r(\RR)-\sigma_{r+1}(\RR)}\right)||\RR-R^*||.\end{multline}
		An application of Gronwall's Lemma  (see corollary 4.3. in \cite{hartman2002})
		yields  \cref{eqn:DOBound}.\,
	\end{proof}

	\bibliographystyle{siamplain}
	\bibliography{extracted}
\end{document}